\documentclass[11pt]{article}
\usepackage[a4paper,left=2.45cm,right=2.45cm,top=2.25cm,bottom=2.35cm]{geometry}
\usepackage{amsmath,amssymb,amsthm,mathtools,mathrsfs,bm}
\usepackage{enumitem}
\usepackage[nocompress]{cite}
\usepackage{xcolor}
\usepackage[colorlinks=true,linkcolor=blue,citecolor=blue,urlcolor=blue,linktoc=all,bookmarksnumbered=true]{hyperref}
\usepackage{microtype}
\hypersetup{
  pdftitle={Enhanced Dissipation by Transport Noise for Stochastic p(x)-Laplace Equations},
  pdfauthor={Jiangwei Zhang, Juntao Wu, Shang Wu}
}
\allowdisplaybreaks[4]
\newtheorem{theorem}{Theorem}[section]
\newtheorem{proposition}[theorem]{Proposition}
\newtheorem{lemma}[theorem]{Lemma}
\newtheorem{corollary}[theorem]{Corollary}
\theoremstyle{definition}
\newtheorem{definition}[theorem]{Definition}
\newtheorem{assumption}[theorem]{Assumption}
\theoremstyle{remark}
\newtheorem{remark}[theorem]{Remark}
\numberwithin{equation}{section}

\newcommand{\dd}{\,\mathrm d}
\newcommand{\essinf}{\operatorname*{ess\,inf}}
\newcommand{\esssup}{\operatorname*{ess\,sup}}
\newcommand{\Div}{\operatorname{div}}

\newcommand{\norm}[1]{\left\lVert #1\right\rVert}
\newcommand{\abs}[1]{\left|#1\right|}
\newcommand{\pair}[2]{\left\langle #1,#2\right\rangle}
\newcommand{\Dpx}{\Delta_{p(\cdot)}}

\title{\bfseries\Large Enhanced Dissipation by Transport Noise for Stochastic $p(x)$-Laplace Equations\footnote{\small\textit{E-mail addresses:} zjwmath@163.com (J. Zhang), wu\_jun\_tao@163.com (J. Wu), ws970303@163.com (S. Wu)}}

\author{{Jiangwei Zhang$^\text{a}\footnote{Corresponding author.}$, \quad Juntao Wu$^\text{b}$, \quad Shang Wu$^\text{c}$
}\\
{\small\textsl{$^\text{a}$ Institute of Applied Physics and Computational Mathematics, }}\\
{\small\textsl{Beijing 100088, P.R. China}}\\
{\small\textsl{$^\text{b}$ School of Mathematics and Statistics, Wuhan University, }}\\
{\small\textsl{Wuhan, Hubei 430072, P.R. China}}\\
{\small\textsl{$^\text{c}$ Intelligent Game and Decision Lab, Academy of Military Sciences, }}\\
{\small\textsl{Beijing 100071, P.R. China}}
}
\date{}

\begin{document}
\maketitle

\begin{abstract}
We are concerned with dissipation enhancement by divergence-free transport noise for stochastic \(p(x)\)-Laplace equations on \(\mathbb T^d\), \(d\ge2\). Spatially variable growth removes homogeneous scaling and produces two flux growth regimes. For periodic log-H\"older continuous exponents satisfying \(2<p^-\le p^+<\infty\), we first establish global variational well-posedness and the exact energy identity in the natural dual spaces. Under the additional condition \(p^+<2d/(d-2)\) when \(d\ge3\), we then prove averaged algebraic decay with any prescribed coefficient, uniformly on bounded zero-mean \(L^2\) balls, by estimating the two flux powers separately. A deterministic lower bound yields strict averaged comparison with the unperturbed flow at prescribed positive times on shells with bounded gradient modular. We also prove optimality of the modular coercivity exponent \(p^+\) near zero. Finally, combining these estimates with intervalwise noise acceleration from the constant-exponent theory gives exponential decay in mean square and almost surely, with moments of the random prefactors bounded uniformly on the initial-data ball. For the exponential estimates, the noise depends on the prescribed rate and moment order; its intensity is fixed in time and its frequency range increases.

\medskip
\noindent\textbf{Keywords:} Variable exponent; Transport noise; Dissipation enhancement; Stochastic convolution; $p(x)$-Laplace operator. 

\medskip
\noindent\textbf{MSC 2020:} 35K92, 35R60, 46E30, 60H15, 60H50.
\end{abstract}

\section{Introduction}

\subsection{Background and formulation}

Let $d\ge2$ and let $\mathbb T^d=\mathbb R^d/\mathbb Z^d$, with $|\mathbb T^d|=1$. We consider
\begin{equation}\label{eq:intro-main}
\begin{cases}
\dd u=\Dpx u\,\dd t+\nabla u\circ\dd W(t,x),&t>0,\ x\in\mathbb T^d,\\
u(0,x)=u_0(x),&x\in\mathbb T^d,
\end{cases}
\end{equation}
where
\[
\Dpx u=\Div\bigl(|\nabla u|^{p(x)-2}\nabla u\bigr).
\]
The field $W$ is white in time, colored in space and divergence-free. Its Fourier representation is given in Section~\ref{sec:noise}. Equation~\eqref{eq:intro-main} is understood through the It\^o variational equation~\eqref{eq:ito-main}; the circle denotes the formal Stratonovich transport term.

Variable-exponent diffusion arises in constitutive laws with spatially dependent growth. Its variational theory is developed in~\cite{Zhikov1987,RuzickaBook,DieningBook}. For such operators, the dissipation is measured by
\begin{equation}\label{eq:intro-modular}
\rho_{p(\cdot)}(f):=\int_{\mathbb T^d}|f(x)|^{p(x)}\dd x.
\end{equation}
Stochastic equations with nonstandard growth have been studied in~\cite{BauzetValletWittboldZimmermann2013,ValletZimmermann2016,ZhangWuHuang2025,ZhangQiuHuang2026}. Here we study the effect of transport noise on the decay of solutions.

We impose the following condition on the exponent.

\begin{assumption}\label{ass:p}
The exponent $p:\mathbb T^d\to(1,\infty)$ admits a periodic representative, still denoted by $p$, for which there exists $C_{\log}>0$ such that
\begin{equation*}
\abs{p(x)-p(y)}
\le \frac{C_{\log}}{\log\!\left(e+d_{\mathbb T}(x,y)^{-1}\right)}
\qquad (x\ne y),
\end{equation*}
where $d_{\mathbb T}$ is the distance on the torus.  In particular, this representative is continuous.  Moreover,
\begin{equation*}
2<p^-:=\essinf_{x\in\mathbb T^d}p(x)
\le p(x)\le
p^+:=\esssup_{x\in\mathbb T^d}p(x)<p_{\mathrm c},
\end{equation*}
where
\[
p_{\mathrm c}:=
\begin{cases}
\dfrac{2d}{d-2},&d\ge3,\\[1ex]
\infty,&d=2.
\end{cases}
\]
\end{assumption}

The log-H\"older condition gives smooth density and the Poincar\'e inequality in the variable-exponent spaces. Both diffusion and transport preserve the spatial mean. We therefore use the zero-mean spaces $H$ and $V$ defined in Section~\ref{sec:prelim}. For the Fourier noise constructed below, Theorem~\ref{thm:wellposed} gives
\[
\|u(t)\|_{L^2}^2+2\int_0^t\rho_{p(\cdot)}(\nabla u(s))\dd s
=\|u_0\|_{L^2}^2.
\]
Thus the transport noise is $L^2$ conservative. Its It\^o correction is $\kappa\Delta$, and the associated heat semigroup will be used to estimate the nonlinear and stochastic convolutions.

Set
\begin{equation}\label{eq:eta-def-intro}
\eta:=\frac12+\frac d2\left(\frac12-\frac1{p^+}\right).
\end{equation}
The condition $p^+<p_{\mathrm c}$ gives $\eta<1$, so the kernel $(t-s)^{-\eta}$ is integrable. This upper restriction is used in the convolution estimates. The variational construction in Subsection~\ref{sec:wellposed-main} only requires the log-H\"older condition and $2<p^-\le p^+<\infty$.

\subsection{Related results and main difficulties}

Enhanced dissipation by incompressible transport has been studied in~\cite{ConstantinKiselevRyzhikZlatos2008,FengIyer2019,FengHuXu}. Results for stochastic transport include scaling limits, quantitative estimates and pathwise decay; see~\cite{Galeati2020,FlandoliGaleatiLuo2021,FlandoliGaleatiLuo2024,BedrossianBlumenthalPunshonSmith2021,GessYaroslavtsev2025,FlandoliLuo2026}.

For constant $p>2$, Dong, Luo and Tang~\cite{DongLuoTang2023} proved averaged and pathwise dissipation enhancement uniformly on bounded initial-data balls. Their argument combines heat smoothing, a large- and small-dissipation decomposition, and the distribution of the noise over many Fourier modes. Their Proposition~5.1 already implies exponential mean-square decay, and the discussion following their Theorem~1.2 explains how a larger geometric factor increases the admissible moment range. We use this acceleration mechanism after establishing estimates compatible with spatially variable growth.

The first issue is the duality needed for the energy identity. The gradient and flux are controlled in $L^{\widetilde p}(Q_T)$ and $L^{\widetilde p'}(Q_T)$, where $\widetilde p(\omega,t,x)=p(x)$. Replacing $p(x)$ by its extrema gives only $\nabla u\in L^{p^-}(Q_T)$ and $F\in L^{(p^+)'}(Q_T)$, which are not a conjugate pair when $p^-<p^+$. We retain the variable-exponent pairing and use a compatible Galerkin correction to preserve transport cancellation. Appendix~\ref{sec:appendix-framework} adapts the time discretization of~\cite[Theorem~A.2]{GwiazdaWoznickiWroblewskaZimmermann2025} to prove the energy identity before Minty identification. The conditions needed for this adaptation are specified there.

The second issue is the loss of homogeneous scaling. For $\hat{D}=\rho_{p(\cdot)}(\nabla u)$, Lemma~\ref{lem:flux-norm} gives
\[
\|J_{p(\cdot)}(u)\|_{L^{p'(\cdot)}}
\le \hat{D}^{(p^--1)/p^-}+\hat{D}^{(p^+-1)/p^+}.
\]
Lemma~\ref{lem:v2} estimates these powers separately by heat smoothing and the pathwise energy identity. In the stochastic convolution, the small-dissipation bound produces both $X$ and $X^{p^+/p^-}$, where $X=\mathbb E\|u(s)\|_{L^2}^2$. On $X\le R^2$, we use
\[
X^{p^+/p^-}\le R^{2(p^+/p^- -1)}X.
\]
Together with integration by parts and Bessel's inequality, this preserves the factor $\|\theta\|_{\ell^\infty}^{2/\beta}$ in Lemma~\ref{lem:v3} and permits a noise choice uniform on the initial-data ball. These estimates extend the averaged enhancement argument of~\cite{DongLuoTang2023}; their intervalwise acceleration then yields the prescribed exponential rate and moment order.

For strict comparison, we prove that
\[
t\longmapsto
\left(\int_{\mathbb T^d}\frac{|\nabla u(t,x)|^{p(x)}}{p(x)}\dd x\right)
\|u(t)\|_{L^2}^{-p^-}
\]
is nonincreasing along the nonzero deterministic flow. The resulting lower bound gives strict averaged comparison at any prescribed positive time on shells bounded away from zero with bounded gradient modular. We also prove that $p^+$ is the smallest exponent in a uniform modular lower bound near zero. This coercivity statement does not determine the asymptotic decay exponent of an individual solution.
\subsection{Main results}
We first state averaged enhancement and strict comparison for time-independent profiles, then exponential decay for stepwise profiles and modular sharpness. Corollary~\ref{cor:path-main} gives a further pathwise consequence.

\begin{theorem}[Averaged enhanced dissipation]\label{thm:average-main}
Suppose that Assumption~\ref{ass:p} holds. Fix $R>0$ and set $m=p^+$. For every $\mu>0$, there exist $\kappa>0$ and a normalized finitely supported annular profile $\theta$ satisfying~\eqref{eq:theta-assumptions} such that, for every $u_0\in H$ with $\|u_0\|_{L^2}\le R$, the variational solution of~\eqref{eq:ito-main} satisfies
\begin{equation}\label{eq:average-main}
\norm{u(t)}_{L^2(\Omega;L^2)}
\le
\frac{\norm{u_0}_{L^2}}
{\left(1+(m-2)\mu t\norm{u_0}_{L^2}^{m-2}\right)^{1/(m-2)}}
\qquad\text{for every }t\ge1.
\end{equation}
\end{theorem}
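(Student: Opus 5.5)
The plan is to follow the averaged enhancement scheme of~\cite{DongLuoTang2023}, adapted to the two flux powers described in the introduction. The goal is a differential inequality for $X(t):=\mathbb E\|u(t)\|_{L^2}^2$.

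\textbf{Step 1: mild formulation.} Since the It\^o correction is $\kappa\Delta$, we write $u$ in mild form with respect to $e^{\kappa t\Delta}$:
\[
u(t)=e^{\kappa(t-s)\Delta}u(s)+\int_s^t e^{\kappa(t-r)\Delta}\Dpx u(r)\dd r+\int_s^t e^{\kappa(t-r)\Delta}\nabla u(r)\cdot\dd W(r).
\]
The three terms are estimated separately in $L^2(\Omega;H^{-1})$ or a weaker negative Sobolev norm.

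\textbf{Step 2: the heat term.} This term decays like $e^{-\kappa(t-s)4\pi^2}\|u(s)\|_{L^2}$ in every norm.

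\textbf{Step 3: the nonlinear convolution.} Here we use Lemma~\ref{lem:flux-norm}. The flux in $L^{p'(\cdot)}$ is bounded by $\hat D^{(p^--1)/p^-}+\hat D^{(p^+-1)/p^+}$. Heat smoothing from $L^{(p^+)'}$ to $H^{-1}$ produces the kernel $(\kappa(t-r))^{-\eta}$, which is integrable because $p^+<p_{\mathrm c}$. Lemma~\ref{lem:v2} then bounds each power by H\"older in time together with the pathwise energy identity, which gives $\int_s^t\hat D\le\frac12\|u(s)\|_{L^2}^2$. The result is a bound by a negative power of $\kappa$ times $R$-dependent factors.

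\textbf{Step 4: the stochastic convolution.} We apply the It\^o isometry and Lemma~\ref{lem:v3}. Integration by parts and Bessel's inequality yield the factor $\|\theta\|_{\ell^\infty}^{2/\beta}$, multiplied by $\int X$ and $\int X^{p^+/p^-}$. On the ball, the second integral is reduced to the first via $X^{p^+/p^-}\le R^{2(p^+/p^--1)}X$. Combining Steps~2--4 gives
\[
\mathbb E\|u(t)\|_{H^{-1}}^2\le \varepsilon(\kappa,\theta,R)\,X(s),
\]
with $\varepsilon$ arbitrarily small once $\kappa$ is large and $\|\theta\|_{\ell^\infty}$ is small.

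\textbf{Step 5: interpolation and the decay inequality.} The energy identity gives $X(t)-X(s)=-2\int_s^t\mathbb E\hat D\,\dd r$. We split according to a large/small dissipation alternative.

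In the large case, $\mathbb E\hat D$ is large relative to $X$, so $X$ drops directly.

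In the small case, we interpolate $\|u\|_{L^2}^2\le\|u\|_{H^{-1}}\|u\|_{H^1}$ and use the smallness from Step~4. The $L^{p^+}$ modular coercivity near zero, $\hat D\gtrsim\min(\|\nabla u\|^{p^-},\|\nabla u\|^{p^+})$, is combined with the Poincar\'e inequality on zero-mean functions, and Jensen's inequality is applied using convexity of $x\mapsto x^{p^+/2}$. This yields, on unit time intervals,
\[
X(t)\le X(s)-c\,\Lambda\,X(s)^{m/2},\qquad m=p^+,
\]
where $\Lambda$ can be made arbitrarily large by the choice of $(\kappa,\theta)$, uniformly over $\|u_0\|_{L^2}\le R$.

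\textbf{Step 6: comparison.} A comparison argument for the ODE $y'=-2\mu y^{m/2}$, with $y=\|\cdot\|^2$, produces~\eqref{eq:average-main} for $t\ge1$; the shift by $1$ absorbs the first interval. Because the difference inequality only holds on unit steps, the target $\mu$ must be taken with room, for instance by choosing $\Lambda\gg\mu$ and using $X(t)\le X(s)$ to handle intermediate times. Uniformity holds since every constant depends only on $R$ through the bound $X\le R^2$.

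\textbf{Main obstacle.} I expect the hardest part to be Step~5 in the variable-exponent setting. The small-dissipation case must produce the power $m=p^+$ uniformly, and the $p^-$ branch of the flux bound must not spoil smallness. If the direct approach fails, I would first try splitting further according to whether $\|\nabla u\|_{L^{p(\cdot)}}\le1$, using the $p^+$ branch there and the stronger $p^-$ branch otherwise, where $\hat D\ge\|\nabla u\|$ already gives linear decay, which dominates.
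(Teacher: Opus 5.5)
Your architecture (mild decomposition with respect to $e^{\kappa t\Delta}$, a large/small dissipation alternative, Lemma~\ref{lem:v2} for the nonlinear convolution, and the reduction $X^{p^+/p^-}\le R^{2(p^+/p^--1)}X$) matches the paper. However, Step~5 contains a genuine error in how the small-dissipation case produces a large rate.

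The inequality you aim for, $X(t)\le X(s)-c\Lambda X(s)^{m/2}$ with $\Lambda$ arbitrarily large uniformly on the ball, is impossible. Take $s=0$ and a deterministic $u_0$ with $\|u_0\|_{L^2}=R$: the right-hand side equals $R^2-c\Lambda R^{m}<0$ once $\Lambda>R^{2-m}/c$. What the small case actually gives is a contraction $X(s+t_0)\le\delta X(s)$, with $\delta=\delta(\kappa,\theta,\mu,R)$ as small as desired. Written subtractively, this is only $X(s)-X(s+t_0)\ge(1-\delta)R^{2-m}X(s)^{m/2}$, whose coefficient never exceeds $R^{2-m}$.

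The coercivity/Jensen step cannot supply the missing factor either. Lemma~\ref{lem:modular-lower} combined with convexity of $x\mapsto x^{p^+/2}$ is exactly the baseline argument of Corollary~\ref{cor:baseline-decay}, and it yields only the fixed constant $c_R$. In the small case the dissipation hypothesis must be used in the opposite direction. It gives the upper bound $\mathbb E\int_{t_n}^{t_{n+1}}\|\nabla u\|_{L^2}^2\dd\tau\le\Gamma(\mu,R)t_0X_n$ (via concavity of $x\mapsto x^{2/r}$), which controls $I_1$ in Lemma~\ref{lem:v3}, or the $H^1$ factor in your interpolation.

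The missing idea is to state the one-step estimate in the algebraic form
$X_{n+1}\le X_n\bigl(1+\mu(m-2)t_0X_n^{(m-2)/2}\bigr)^{-2/(m-2)}$
and to iterate in the variable $X^{-(m-2)/2}$.
\begin{itemize}
\item In the small case, choose $(\kappa,\theta)$ so that $\delta\le(1+\mu(m-2)t_0R^{m-2})^{-2/(m-2)}$ and use $X_n\le R^2$.
\item In the large case, combine $X_{n+1}\le X_n(1-2\mu t_0X_n^{(m-2)/2})$ with the tangent inequality~\eqref{eq:tangent}.
\end{itemize}
Lemma~\ref{lem:iteration} then gives $X_{n+1}^{-(m-2)/2}\ge X_n^{-(m-2)/2}+\mu(m-2)t_0$. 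This is how smallness of $\delta$ becomes an arbitrarily large coefficient. Your handling of intermediate times with a margin in $\mu$ is then fine.

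Two smaller points. First, Lemma~\ref{lem:v3} is proved only under the small-dissipation hypothesis, so your Step~4 cannot precede the case split. Second, Lemma~\ref{lem:v2} controls only the time average $t_0^{-1}\int_{t_n}^{t_{n+1}}\|v_2(r)\|_{L^2}\dd r$, not a fixed-time norm. The endpoint must therefore be reached through norm monotonicity, $\|u(t_{n+1})\|_{L^2}\le t_0^{-1}\int_{t_n}^{t_{n+1}}\|u(r)\|_{L^2}\dd r$, rather than through a pointwise bound $\mathbb E\|u(t)\|_{H^{-1}}^2\le\varepsilon X(s)$.

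With these corrections, a time-averaged $H^{-1}$ estimate plus interpolation is a reasonable substitute for the paper's direct $L^2$ estimate of the stochastic convolution (H\"older with weights $\lambda_\ell^{1/\alpha}\lambda_\ell^{(1-\beta)/\beta}$). Either way, though, the small case delivers only a contraction, which is why the algebraic form of the one-step inequality is essential.
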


\begin{remark}\label{rem:average-scope}
The noise is chosen uniformly on the initial-data ball. The coefficient $\mu$ may exceed $c_R^{\mathrm{opt}}$ from Remark~\ref{rem:optimal-coercivity-constant}. The restriction $t\ge1$ arises only in passing from grid times to arbitrary times; Theorem~\ref{thm:strict-enhancement} gives a direct comparison at any prescribed $T>0$.
\end{remark}

The deterministic lower bound in Theorem~\ref{thm:lower-bound} yields a strict comparison with the unperturbed evolution.

\begin{theorem}[Strict enhancement at a prescribed time]\label{thm:strict-enhancement}
Suppose that Assumption~\ref{ass:p} holds and let $0<r_*\le R$ and $M>0$.  Put
\[
\mathcal S(r_*,R,M):=\Bigl\{u_0\in W^{1,p(\cdot)}(\mathbb T^d):\ \textstyle\int_{\mathbb T^d}u_0\dd x=0,\
r_*\le\norm{u_0}_{L^2}\le R,\ \rho_{p(\cdot)}(\nabla u_0)\le M\Bigr\}.
\]
For every $T>0$ and $\lambda\in(0,1)$, there exist $\kappa>0$ and a finitely supported annular profile $\theta$ satisfying~\eqref{eq:theta-assumptions}, depending only on $T,\lambda,r_*,R,M$ and the structural data, such that, for every $u_0\in\mathcal S(r_*,R,M)$,
\begin{equation*}
\norm{u(T)}_{L^2(\Omega;L^2)}\le\lambda\norm{u^{\mathrm{det}}(T)}_{L^2}.
\end{equation*}
Here $u$ solves~\eqref{eq:ito-main} and $u^{\mathrm{det}}$ solves~\eqref{eq:unperturbed} with the same initial datum.
\end{theorem}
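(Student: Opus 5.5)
The plan is to combine a uniform deterministic lower bound on $\norm{u^{\mathrm{det}}(T)}_{L^2}$ over the shell with an averaged upper bound on the stochastic solution, obtained from the enhancement machinery behind Theorem~\ref{thm:average-main}, at the single time $T$. Schematically, we need $\norm{u(T)}_{L^2(\Omega;L^2)}\le\lambda\,\ell$, where $\ell>0$ is a lower bound for $\norm{u^{\mathrm{det}}(T)}_{L^2}$ that holds uniformly on $\mathcal S(r_*,R,M)$.

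\emph{Step 1 (lower bound for the unperturbed flow).} I will invoke Theorem~\ref{thm:lower-bound}, i.e. the monotonicity of
\[
t\mapsto\Bigl(\int_{\mathbb T^d}\frac{|\nabla u^{\mathrm{det}}|^{p(x)}}{p(x)}\dd x\Bigr)\norm{u^{\mathrm{det}}(t)}_{L^2}^{-p^-}.
\]
At $t=0$ this quantity is at most $M r_*^{-p^-}/p^-$. Hence for all $t$
\[
\rho_{p(\cdot)}(\nabla u^{\mathrm{det}}(t))\le \frac{p^+}{p^-}\,M r_*^{-p^-}\,\norm{u^{\mathrm{det}}(t)}_{L^2}^{p^-}.
\]
Insert this into the energy identity $\frac{d}{dt}\norm{u^{\mathrm{det}}}_{L^2}^2=-2\rho_{p(\cdot)}(\nabla u^{\mathrm{det}})$. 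This gives a differential inequality $y'\ge -C y^{p^-/2}$ for $y=\norm{u^{\mathrm{det}}}_{L^2}^2$, with $C=C(M,r_*,p^\pm)$. Integrating it yields an explicit $\ell=\ell(T,r_*,M,p^\pm)>0$ with $\norm{u^{\mathrm{det}}(T)}_{L^2}\ge\ell$ for every $u_0\in\mathcal S$. Nonvanishing of the deterministic flow, which the monotonicity requires, follows from the same argument by continuity. This step is routine.

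\emph{Step 2 (upper bound for the stochastic solution at time $T$).} I need $\norm{u(T)}_{L^2(\Omega;L^2)}\le\varepsilon:=\lambda\ell$ uniformly for $\norm{u_0}_{L^2}\le R$. Theorem~\ref{thm:average-main} only covers $t\ge1$ and gives algebraic decay, so I will not quote it directly. Instead I will rerun its grid-time argument on $[0,T]$ with step $\tau=T/N$. On each subinterval I use the mild formulation with respect to the heat semigroup $e^{\kappa t\Delta}$. Three ingredients control the pieces:
\begin{itemize}
\item the nonlinear convolution is controlled by Lemma~\ref{lem:v2}, which treats the two flux powers separately and uses the integrable kernel $(t-s)^{-\eta}$;
\item the stochastic convolution is controlled by Lemma~\ref{lem:v3}, which carries the small factor $\|\theta\|_{\ell^\infty}^{2/\beta}$, after linearising $X^{p^+/p^-}\le R^{2(p^+/p^--1)}X$;
\item the free part $e^{\kappa\tau\Delta}u(t_k)$ is split into high and low modes, as in the large/small dissipation decomposition of \cite{DongLuoTang2023}.
\end{itemize}
This yields a one-step contraction $\mathbb E\norm{u(t_{k+1})}^2\le(1-\delta)\,\mathbb E\norm{u(t_k)}^2$, or a recursive bound of the same type. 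The constant $\delta$ depends on $\tau$ and $R$, and it is brought close to $1$ by choosing $\kappa$ large and $\|\theta\|_{\ell^\infty}$ small, with $\theta$ supported on a wide annulus. The $L^2$ energy identity from Theorem~\ref{thm:wellposed} makes the sequence monotone, so it suffices to reach level $\varepsilon^2$ at the last grid point $t_N=T$. Here the time $T$ is fixed and $\tau$ may be taken as $T$ itself, so the $t\ge1$ restriction never enters.

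\emph{Main obstacle.} The hard part is Step 2: making the contraction factor small in a fixed time $T$, uniformly on the $L^2$ ball, when the dissipation terms are inhomogeneous. In the low-mode/small-dissipation regime the only gain is through $\kappa$ and $\|\theta\|_{\ell^\infty}$. The parameter choice must be ordered carefully: first fix $\varepsilon$ and $T$, then the frequency cut-off, then $\kappa$, then $\theta$. The stochastic convolution error must stay below $\varepsilon^2/2$ while the heat part at modes above the cut-off is $e^{-\kappa T N_0^2}R^2$. The modes below the cut-off carry no heat damping in general, so I would first try to use the dissipated modular from the energy identity: if $\int_0^T\rho_{p(\cdot)}(\nabla u)\dd s$ is small, then Lemma~\ref{lem:v2} makes the nonlinear part negligible, and noise mixing transfers mass to high modes. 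Otherwise the energy has already dropped substantially, which is precisely the large/small dichotomy. Combining Steps 1 and 2 with $\varepsilon=\lambda\ell$ gives the claim, with parameters depending only on $T,\lambda,r_*,R,M$.
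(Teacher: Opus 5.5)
Your argument is correct in outline, but it is arranged differently from the paper's. The paper keeps both sides proportional to $\|u_0\|_{L^2}$:
\begin{itemize}
\item Theorem~\ref{thm:lower-bound}, with $\Lambda_0(u_0)\le\Lambda_*:=p^+M/(p^-r_*^2)$, bounds $\|u^{\mathrm{det}}(T)\|_{L^2}$ from below by $\|u_0\|_{L^2}(1+(p^--2)\Lambda_*T)^{-1/(p^--2)}$;
\item Proposition~\ref{prop:one-step} at $n=0$ with $t_0=T$ bounds $\|u(T)\|_{L^2(\Omega;L^2)}$ from above by $\|u_0\|_{L^2}(1+(p^+-2)\mu Tr_*^{p^+-2})^{-1/(p^+-2)}$;
\item $\mu$ is then chosen so that the ratio of the two denominators is at most $\lambda$.
\end{itemize}
You instead decouple the two sides into an absolute lower bound $\ell$ and an absolute upper bound $\lambda\ell$. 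Your Step~1 gives exactly $\ell=r_*(1+(p^--2)\Lambda_*T)^{-1/(p^--2)}$.

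Given the paper's results, your Step~2 needs no rerun. With $m=p^+$ and $X_0=\|u_0\|_{L^2}^2$, the same Proposition~\ref{prop:one-step} gives
\[
\mathbb E\|u(T)\|_{L^2}^2\le\bigl(X_0^{-(m-2)/2}+\mu(m-2)T\bigr)^{-2/(m-2)}\le\bigl(\mu(m-2)T\bigr)^{-2/(m-2)}.
\]
So you take $\mu$ with $(\mu(m-2)T)^{-1/(m-2)}\le\lambda\ell$, and then $\kappa,\theta$ from Proposition~\ref{prop:choose-noise}.

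Your decoupling also allows a simplification. For an absolute bound, the large/small-dissipation split is unnecessary, because Case~2 enters Lemma~\ref{lem:v3} only through $I_1$. The energy identity alone gives $I_1\le d\,\mathbb E\int_0^T\|\nabla u\|_{L^2}^2\dd\tau\le d(T+R^2/2)$, using $|z|^2\le1+|z|^{p(x)}$. Combined with Lemma~\ref{lem:v2}, \eqref{eq:v1-est} and the $I_2$ bound, this yields $\|u(T)\|_{L^2(\Omega;L^2)}\le\lambda\ell$ uniformly on the ball, after taking $\kappa$ large and then $\|\theta\|_{\ell^\infty}$ small. The paper's relative form, in turn, reuses the one-step algebraic estimate verbatim and shows explicitly how $\mu$ must dominate $\Lambda_*$ at the scale $r_*$.

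Three parts of your Step~2 description need correcting, though none carries the proof.
\begin{itemize}
\item The obstacle you see for the free part does not exist, and your claim that low modes carry no heat damping is wrong. On zero-mean functions every mode has $\lambda_\ell\ge4\pi^2$, so no frequency cut-off is needed, and \eqref{eq:v1-est} gives $\frac1T\int_0^T\|e^{\kappa r\Delta}u_0\|_{L^2}\dd r\le\|u_0\|_{L^2}/(4\pi^2\kappa T)$.
\item Do not aim for a relative contraction $\mathbb E\|u(t_{k+1})\|_{L^2}^2\le(1-\delta)\,\mathbb E\|u(t_k)\|_{L^2}^2$ with $1-\delta$ small uniformly on the ball. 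These estimates do not provide it: in the large-dissipation case the factor is $(1+\mu(m-2)\tau X^{(m-2)/2})^{-2/(m-2)}$, which tends to $1$ as $X\to0$. You only need absolute smallness.
\item Keep the time averaging \eqref{eq:average-triangle}. The stochastic convolution is controlled only after integrating in $r$, and that integration is what produces the factor $1/(2\kappa)$ in $I_1$ and $I_2$. A pointwise bound on $v_3(T)$ does not follow the same way.
\end{itemize}
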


\begin{theorem}[Exponential decay]\label{thm:exponential-main}
Suppose that Assumption~\ref{ass:p} holds. Given $R,c,q>0$, there exist an intensity $\kappa>0$, fixed in time, and deterministic stepwise profiles, finitely supported on each interval and satisfying~\eqref{eq:theta-assumptions}, such that, for every $u_0\in H$ with $\|u_0\|_{L^2}\le R$, the solution of~\eqref{eq:time-dependent-spde} satisfies
\begin{equation}\label{eq:exponential-mean-main}
\mathbb E\|u(t)\|_{L^2}^2\le \widehat C e^{-2ct},\qquad t\ge0,
\end{equation}
where $\widehat C$ is independent of $u_0$, and
\begin{equation}\label{eq:exponential-path-main}
\|u(t)\|_{L^2}\le K(u_0,\omega)e^{-ct},\qquad t\ge0,\quad\text{almost surely},
\end{equation}
with
\begin{equation}\label{eq:exponential-moment-main}
\sup_{\substack{u_0\in H\\\|u_0\|_{L^2}\le R}}
\mathbb E[K(u_0,\cdot)^q]<\infty.
\end{equation}
The intensity and profiles depend only on $R,c,q$ and the structural data and are chosen before the initial datum. 
The profiles can be chosen to be annular, with the frequency
growth described in Remark~\ref{rem:frequency-growth}.
\end{theorem}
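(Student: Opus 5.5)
The proof runs the intervalwise acceleration scheme of~\cite{DongLuoTang2023}, now fed by the variable-exponent averaged estimate of Theorem~\ref{thm:average-main} and its finite-interval form.

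\textbf{Contraction on unit intervals.} The basic building block is the uniform-in-ball estimate behind Theorem~\ref{thm:average-main}, applied on a unit time interval. Given $R$ and a factor $\delta\in(0,1)$, I would choose a profile $\theta_0$ and the fixed intensity $\kappa$ with
\[
\mathbb E\|u(1)\|_{L^2}^2\le\delta^2\|u_0\|_{L^2}^2\qquad\text{whenever } \|u_0\|_{L^2}\le R.
\]
Here I use that~\eqref{eq:average-main} at $t=1$ gives the bound $\|u_0\|^2_{L^2}\bigl(1+(m-2)\mu\|u_0\|^{m-2}_{L^2}\bigr)^{-2/(m-2)}$. This alone is not a uniform contraction near $0$, because the factor tends to $1$ as $\|u_0\|_{L^2}\to0$.

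To remove this degeneracy I would use the linear mechanism: near zero the It\^o correction $\kappa\Delta$ dominates. In the stochastic-convolution estimate of Lemma~\ref{lem:v3}, the small-dissipation bound is linear in $X$ once $X\le R^2$, via $X^{p^+/p^-}\le R^{2(p^+/p^--1)}X$. So the unit-interval estimate should take the form
\[
\mathbb E\|u(1)\|^2_{L^2}\le\bigl(\varepsilon(\kappa,\theta)+C_R\|\theta\|_{\ell^\infty}^{2/\beta}\bigr)\|u_0\|_{L^2}^2,
\]
with a constant that makes the map a genuine contraction, uniformly on the ball. I would re-run the proof of Theorem~\ref{thm:average-main} on one interval to extract this linear form. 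This is the intervalwise statement the paper attributes to the constant-exponent theory.

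\textbf{Iteration.} Since the $L^2$ norm is pathwise nonincreasing by the energy identity of Theorem~\ref{thm:wellposed}, the solution at time $n$ stays in the ball $\{\|u\|_{L^2}\le R\}$. Conditioning on $\mathcal F_n$ and using independence of the noise increments on $[n,n+1]$ together with the Markov property of variational solutions, I would apply the one-step bound with profile $\theta_n$. With $\delta=e^{-c'}$ this gives $\mathbb E\|u(n)\|^2_{L^2}\le R^2e^{-2c'n}$. Monotonicity then interpolates between grid times and yields~\eqref{eq:exponential-mean-main} with $\widehat C=R^2e^{2c'}$.

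\textbf{Almost sure decay and moments.} For~\eqref{eq:exponential-path-main}, I would set
\[
K:=\sup_n e^{c(n+1)}\|u(n)\|_{L^2},
\]
so that monotonicity gives $\|u(t)\|_{L^2}\le Ke^{-ct}$ for all $t$. The moment bound~\eqref{eq:exponential-moment-main} requires $\sum_n e^{qc(n+1)}\mathbb E\|u(n)\|^q_{L^2}<\infty$.

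For $q\le2$ this follows from Jensen's inequality with $c'>c$. For $q>2$, I would bound $\|u(n)\|^q_{L^2}\le R^{q-2}\|u(n)\|^2_{L^2}$, which gives $\mathbb E\|u(n)\|^q_{L^2}\le R^{q}e^{-2c'n}$. I then choose $c'>qc/2$. Hence the rate $c'$, and therefore the profiles, depend on $c$ and $q$ as stated.

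\textbf{Frequency growth and the main obstacle.} Achieving a small contraction factor forces $\|\theta_n\|_{\ell^\infty}$ small, that is, many modes, while $\kappa$ stays fixed. This produces the frequency growth of Remark~\ref{rem:frequency-growth}. Since $c'$ is fixed, a single profile repeated on every interval would in fact suffice. Stepwise profiles are only needed if one wants to take the contraction factor on later intervals, where the data are smaller, from a different admissible family.

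The main obstacle is the first step: extracting a contraction factor that is uniform as $\|u_0\|_{L^2}\to0$ with $\kappa$ independent of the datum. The two flux powers from Lemma~\ref{lem:flux-norm} scale differently near zero, and the $\hat D^{(p^--1)/p^-}$ term is the worse one. I would control it by Lemma~\ref{lem:v2}, using the pathwise energy identity, and check that after integration in time it is bounded linearly by $\|u_0\|^2_{L^2}$ on the ball.
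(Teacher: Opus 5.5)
Your final steps are fine: the prefactor $K=\sup_n e^{c(n+1)}\|u(n)\|_{L^2}$, the bound $\|u(n)\|_{L^2}^q\le R^{q-2}\|u(n)\|_{L^2}^2$ for $q>2$, and a rate exceeding $c\max\{2,q\}/2$ all agree with the paper. The gap is your first step, on which everything rests: one fixed noise with $\mathbb E\|u(1)\|_{L^2}^2\le\delta^2\|u_0\|_{L^2}^2$ on the whole ball.

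\textbf{Why the paper's estimates cannot give it.} The linear bound~\eqref{eq:raw-contraction} holds only in the small-dissipation case. That case is defined by the nonlinear threshold $\mathcal D<\mu t_0X^{m/2}$, and this threshold is exactly what makes $\mathbb E\int D^{2/p^+}\dd\tau\lesssim X$ in Lemma~\ref{lem:v3}. A linear threshold $\mathcal D<\nu X$ would only give $X^{2/p^+}\gg X$. In the complementary case the energy identity gives only $X_1\le X_0\bigl(1-2\mu t_0X_0^{(m-2)/2}\bigr)$, which degenerates as $X_0\to0$.

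\textbf{Why the contraction is false.} For a fixed finitely supported profile the claimed contraction does not hold. Take $u_0=\epsilon\phi$ with $\phi$ smooth and $\|\phi\|_{L^2}=1$. Let $\psi$ solve the pure transport equation $\dd\psi=\kappa\Delta\psi\dd t+\sum_j\xi_j\cdot\nabla\psi\dd B^j$, $\psi(0)=\phi$, driven by the same noise. In $\dd\langle u,\psi\rangle$ the $\kappa\Delta$ terms cancel the cross variation, since $\sum_j\xi_j\otimes\xi_j=2\kappa I_d$. The martingale part vanishes because the $\xi_j$ are divergence free. Hence $\langle u(1),\psi(1)\rangle=\epsilon-\int_0^1\!\int_{\mathbb T^d}J_{p(\cdot)}(u)\cdot\nabla\psi\dd x\dd t$. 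For finitely many smooth fields, $\nabla\psi$ has moments of all orders. Then Lemma~\ref{lem:flux-norm}, H\"older and $\mathbb E\int_0^1\rho_{p(\cdot)}(\nabla u)\dd t\le\epsilon^2/2$ give that the last term has expectation $O(\epsilon^{2-2/p^-})=o(\epsilon)$. Since $\|\psi(1)\|_{L^2}=1$, it follows that $\mathbb E\|u(1)\|_{L^2}^2\ge\epsilon^2(1-o(1))$.

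For the same reason, your remark that one repeated profile would suffice is wrong. The obstacle you single out is also not the real one: Lemma~\ref{lem:v2} already bounds the $\hat D^{a_-}$ contribution linearly, with constant $K_2\to0$.

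\textbf{The missing idea} is the intervalwise acceleration of Proposition~\ref{prop:path-discrete}. You keep the nonlinear one-step estimate but let its coefficient grow, $\mu_n=\gamma^n\mu_0$ with $\gamma>2$. Fix $\kappa$ once so that $C_1(\kappa,1,R)\le c_*$. On $[n,n+1)$ choose a new annular profile with $C_2(\theta^{(n)},\mu_n,R)\le c_*$, where $c_*\le(\gamma+1)^{-1}$. Since $\Gamma(\mu_n,R)$ grows, this forces $\|\theta^{(n)}\|_{\ell^\infty}\to0$; that is the source of the frequency growth.

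With $h=(m-2)/2$ and $Y_n=\mathbb E\|u(n)\|_{L^2}^2$, argue by cases:
\begin{itemize}
\item In the large-dissipation case, $Y_n^{-h}-Y_{n-1}^{-h}\ge\mu_0(m-2)\gamma^{n-1}$ directly.
\item In the small-dissipation case, the fixed factor $c_*^{1/h}$, combined with the inductive lower bound on $Y_{n-1}^{-h}$, gives the same increment.
\end{itemize}
So $Y^{-h}$ increases by at least $\mu_0(m-2)\gamma^{n-1}$ at step $n$ in both cases, with no linear contraction needed near zero. Summing gives $Y_n^{-h}\ge\mu_0(m-2)(\gamma^n-1)/(\gamma-1)$. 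Since $x(1+Ax^h)^{-1/h}\le A^{-1/h}$, taking $\mu_0=1$ yields the data-independent bound $Y_n\le(m-2)^{-2/(m-2)}\gamma^{-2(n-1)/(m-2)}$. Choosing $\log\gamma/(m-2)>c\max\{2,q\}/2$ then feeds into your moment argument unchanged.
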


\begin{remark}\label{rem:pathwise-scope}
The profiles are fixed on each $[n,n+1)$ and move to larger frequency annuli. Their intensity is fixed in time, while their spatial derivatives grow as quantified in Remark~\ref{rem:frequency-growth}. The noise is chosen for the prescribed rate $c$ and moment order $q$. Estimate~\eqref{eq:exponential-path-main} holds for each fixed datum, and~\eqref{eq:exponential-moment-main} is uniform on the initial-data ball; a common random prefactor for all data is not asserted. The normalized prefactor in Corollary~\ref{cor:path-main} has uniform moments on shells bounded away from zero.
\end{remark}
\begin{proposition}[Sharpness of the modular coercivity exponent]\label{prop:pplus-optimality}
Suppose that Assumption~\ref{ass:p} holds. For every $0<q<p^+$ and $c>0$, there are nonzero zero-mean $u\in W^{1,p(\cdot)}(\mathbb T^d)$ with arbitrarily small $\|u\|_{L^2}$ such that
\[
\rho_{p(\cdot)}(\nabla u)<c\|u\|_{L^2}^q.
\]
Together with Lemma~\ref{lem:modular-lower}, this shows that $p^+$ is the smallest exponent admitting a uniform modular lower bound near zero.
\end{proposition}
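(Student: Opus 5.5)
We construct test functions concentrated where $p$ is close to $p^+$. For small amplitude, $|\nabla u|^{p(x)}$ is dominated by the smallest exponent present in the support. So we take a bump supported near a point where $p$ nearly attains its supremum. With $q<p^+$ fixed, continuity of $p$ (Assumption~\ref{ass:p}) gives a point $x_0$ and a radius $r>0$ such that $p(x)\ge p_1:=(q+p^+)/2>q$ on the ball $B_r(x_0)$. The supremum is attained by the continuous representative, and even an approximate point suffices.

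Next fix a smooth profile. Choose $\varphi\in C_c^\infty(B_r(x_0))$, nonzero, and subtract nothing yet. To get zero mean, take $\varphi=\psi(\cdot-x_1)-\psi(\cdot-x_2)$ with two translates of a bump, both supported inside $B_r(x_0)$. Then $\int\varphi=0$ and $\nabla\varphi$ is supported in $B_r(x_0)$. Set $u_\varepsilon=\varepsilon\varphi$ with $\varepsilon\in(0,1]$, and assume $\|\nabla\varphi\|_{L^\infty}\le1$ after rescaling $\varphi$, which only changes constants. Then $|\nabla u_\varepsilon|\le\varepsilon\le1$ on the support, so $|\nabla u_\varepsilon|^{p(x)}\le\varepsilon^{p(x)}|\nabla\varphi|^{p(x)}\le\varepsilon^{p_1}$ there. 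Hence $\rho_{p(\cdot)}(\nabla u_\varepsilon)\le\varepsilon^{p_1}|B_r|$. Meanwhile $\|u_\varepsilon\|_{L^2}^q=\varepsilon^q\|\varphi\|_{L^2}^q$.

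The key comparison is $\varepsilon^{p_1}|B_r|<c\,\varepsilon^q\|\varphi\|_{L^2}^q$. This holds for all sufficiently small $\varepsilon$ because $p_1>q$. The functions are nonzero, zero-mean and smooth, hence in $W^{1,p(\cdot)}$, and $\|u_\varepsilon\|_{L^2}\to0$, which gives the claim.

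The only delicate point is localizing to where $p>q$. This is exactly what continuity provides. Using $\varepsilon\le1$ to bound $\varepsilon^{p(x)}\le\varepsilon^{p_1}$ is the step that converts variable growth into a single power. The final sentence of the proposition then follows by combining this with Lemma~\ref{lem:modular-lower}: that lemma gives the lower bound with exponent $p^+$, and the construction above rules out any smaller exponent.
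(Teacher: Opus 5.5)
Your proof is correct and follows the same route as the paper. You localize to an open set where $p\ge p_1>q$, scale a fixed zero-mean profile by $\varepsilon\le1$, and use $\varepsilon^{p(x)}\le\varepsilon^{p_1}$ there to beat $\varepsilon^q\|\varphi\|_{L^2}^q$; the only cosmetic differences are that the paper gets zero mean by subtracting the mean of a single bump (so only $\nabla\varphi$ is localized) and keeps $\rho_{p(\cdot)}(\nabla\varphi)$ as the constant instead of normalizing $\|\nabla\varphi\|_{L^\infty}\le1$.
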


\begin{remark}\label{rem:pplus-scope}
For the unperturbed flow, Corollary~\ref{cor:baseline-decay} and Theorem~\ref{thm:lower-bound} give algebraic upper and lower bounds with exponents $1/(p^+-2)$ and $1/(p^--2)$. They agree for constant $p$. For nonconstant $p$, these bounds and the modular sharpness result do not determine the asymptotic exponent of an individual solution.
\end{remark}

Section~\ref{sec:prelim} gives the variable-exponent estimates and deterministic decay bounds. Section~\ref{sec:noise} proves well-posedness and the mild formulation. Sections~\ref{sec:average}--\ref{sec:pathwise} establish the enhancement results, using the energy formula proved in Appendix~\ref{sec:appendix-framework}.

\section{Preliminaries}\label{sec:prelim}

\subsection{Lebesgue and Sobolev spaces with variable exponent}

Let $p$ satisfy Assumption~\ref{ass:p} and set $p'(x)=p(x)/(p(x)-1)$. With the modular in~\eqref{eq:intro-modular}, define the Luxemburg norm by
\begin{equation*}
\norm{f}_{L^{p(\cdot)}}
:=\inf\left\{\lambda>0:\rho_{p(\cdot)}(f/\lambda)\le1\right\}.
\end{equation*}

The space $L^{p(\cdot)}(\mathbb T^d)$ is reflexive and separable. The log-H\"older condition gives smooth density in $W^{1,p(\cdot)}(\mathbb T^d)$; see~\cite{DieningBook,RuzickaBook}.

We use the following spaces of functions with zero mean
\begin{align*}
H&:=\left\{u\in L^2(\mathbb T^d):\int_{\mathbb T^d}u\dd x=0\right\},\\
V&:=\left\{u\in W^{1,p(\cdot)}(\mathbb T^d):\int_{\mathbb T^d}u\dd x=0\right\},
\end{align*}
with norm $\norm{u}_V:=\norm{\nabla u}_{L^{p(\cdot)}}$. The variable exponent Poincar\'e inequality gives a constant $\hat{C}_p>0$ such that
\begin{equation}\label{eq:px-poincare}
\norm{u}_{L^{p(\cdot)}}\le \hat{C}_p\norm{\nabla u}_{L^{p(\cdot)}}
\qquad\text{for every }u\in V.
\end{equation}
Since $p^->2$ and $|\mathbb T^d|=1$, there is a constant $\hat{C}_e>0$ such that
\begin{equation}\label{eq:embedding-L2}
\norm{f}_{L^2}\le \hat{C}_e\norm{f}_{L^{p(\cdot)}}.
\end{equation}
In particular, the embedding $V\hookrightarrow H$ is continuous and compact.

\begin{lemma}\label{lem:modular-norm}
For $f\in L^{p(\cdot)}$ and $g\in L^{p'(\cdot)}$, writing $z=\|f\|_{L^{p(\cdot)}}$, one has
\begin{gather}
\min\{z^{p^-},z^{p^+}\}
\le\rho_{p(\cdot)}(f)\le\max\{z^{p^-},z^{p^+}\},\label{eq:modular-norm-1}\\
z\le\rho_{p(\cdot)}(f)^{1/p^-}+\rho_{p(\cdot)}(f)^{1/p^+},\label{eq:modular-norm-2}\\
\int_{\mathbb T^d}|fg|\dd x
\le2\|f\|_{L^{p(\cdot)}}\|g\|_{L^{p'(\cdot)}}.\label{eq:px-holder}
\end{gather}
\end{lemma}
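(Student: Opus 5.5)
We prove the three estimates of Lemma~\ref{lem:modular-norm} in turn, using only the definition of the Luxemburg norm, the bounds $1<p^-\le p(x)\le p^+<\infty$, and the continuity of $\lambda\mapsto\rho_{p(\cdot)}(f/\lambda)$.

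\emph{Step 1: the unit-norm identity.} If $z=0$, then $f=0$ a.e. and all claims are trivial, so assume $z>0$. The map $\lambda\mapsto\rho_{p(\cdot)}(f/\lambda)$ is finite for $\lambda\ge z$, since $f\in L^{p(\cdot)}$ and $p^+<\infty$. It is nonincreasing and, by dominated convergence, continuous there. Hence $\rho_{p(\cdot)}(f/z)\le1$. On the other hand, for $\lambda<z$ we have $\rho_{p(\cdot)}(f/\lambda)>1$. Using $\rho(f/\lambda)\le (z/\lambda)^{p^+}\rho(f/z)$ for $\lambda<z$ and letting $\lambda\uparrow z$, we obtain $\rho_{p(\cdot)}(f/z)\ge1$. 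Therefore $\rho_{p(\cdot)}(f/z)=1$.

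\emph{Step 2: the bounds \eqref{eq:modular-norm-1}.} Write
\[
\rho_{p(\cdot)}(f)=\int_{\mathbb T^d} z^{p(x)}\,|f(x)/z|^{p(x)}\dd x .
\]
For every $x$, the factor $z^{p(x)}$ lies between $\min\{z^{p^-},z^{p^+}\}$ and $\max\{z^{p^-},z^{p^+}\}$, according to whether $z\ge1$ or $z<1$. Combining this with $\rho_{p(\cdot)}(f/z)=1$ from Step~1 gives \eqref{eq:modular-norm-1}.

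\emph{Step 3: the bound \eqref{eq:modular-norm-2}.} This follows from \eqref{eq:modular-norm-1} by a case split, writing $\rho=\rho_{p(\cdot)}(f)$.
\begin{itemize}[leftmargin=*]
\item If $z\ge1$, then $\rho\ge z^{p^-}$, so $z\le\rho^{1/p^-}$.
\item If $z<1$, then $\rho\ge z^{p^+}$, so $z\le\rho^{1/p^+}$.
\end{itemize}
In either case $z$ is bounded by the sum $\rho^{1/p^-}+\rho^{1/p^+}$.

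\emph{Step 4: H\"older's inequality \eqref{eq:px-holder}.} By homogeneity we may normalize $\|f\|_{L^{p(\cdot)}}=\|g\|_{L^{p'(\cdot)}}=1$; the degenerate zero cases are trivial. Young's inequality with the pointwise exponents gives
\[
|fg|\le \frac{|f|^{p(x)}}{p(x)}+\frac{|g|^{p'(x)}}{p'(x)}\le |f|^{p(x)}+|g|^{p'(x)} .
\]
Integrating and applying Step~1 to both $f$ and $g$ yields $\int|fg|\le 1+1=2$, which is \eqref{eq:px-holder}. Here $p'$ has $(p')^+=(p^-)'<\infty$, so Step~1 applies to $g$ as well.

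The only point requiring care is Step~1, namely the identity $\rho_{p(\cdot)}(f/\|f\|)=1$. It relies on the finiteness of $p^+$ (and of $(p')^+$ for $g$) to guarantee continuity of the modular in $\lambda$. Everything else is elementary.
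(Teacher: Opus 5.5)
Your proposal is correct and follows the same route as the paper: the unit-norm identity $\rho_{p(\cdot)}(f/z)=1$, pointwise bounds on $z^{p(x)}$ in the two regimes $z\ge1$ and $z<1$, and pointwise Young's inequality applied to the normalized functions. You also supply the continuity argument behind $\rho_{p(\cdot)}(f/z)=1$, using $p^+<\infty$ and $(p')^+<\infty$, which the paper only asserts.
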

\begin{proof}
For $z>0$, $\rho_{p(\cdot)}(f/z)=1$. Bounding $z^{p(x)}$ gives~\eqref{eq:modular-norm-1}, and its two regimes give~\eqref{eq:modular-norm-2}. Young's inequality applied to the normalized functions yields~\eqref{eq:px-holder}; see~\cite{DieningBook}.
\end{proof}

\subsection{The nonlinear flux and the \texorpdfstring{$p(x)$}{p(x)}-Laplace operator}

For $u\in V$, we define the nonlinear flux as follows:
\[
J_{p(\cdot)}(u):=\abs{\nabla u}^{p(x)-2}\nabla u,
\qquad
\Dpx u:=\Div J_{p(\cdot)}(u)\in V^*.
\]
Then $J_{p(\cdot)}(u)\in L^{p'(\cdot)}(\mathbb T^d;\mathbb R^d)$ and
\begin{equation}\label{eq:flux-modular}
\rho_{p'(\cdot)}\bigl(J_{p(\cdot)}(u)\bigr)
=\int_{\mathbb T^d}\abs{\nabla u}^{(p(x)-1)p'(x)}\dd x
=\rho_{p(\cdot)}(\nabla u).
\end{equation}

\begin{lemma}\label{lem:flux-norm}
Set
\begin{equation*}
a_-:=\frac{p^--1}{p^-},
\qquad
a_+:=\frac{p^+-1}{p^+}.
\end{equation*}
Then, for every $u\in V$,
\begin{equation*}
\norm{J_{p(\cdot)}(u)}_{L^{p'(\cdot)}}
\le
\rho_{p(\cdot)}(\nabla u)^{a_-}
+
\rho_{p(\cdot)}(\nabla u)^{a_+}.
\end{equation*}
\end{lemma}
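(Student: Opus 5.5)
The plan is to apply inequality~\eqref{eq:modular-norm-2} of Lemma~\ref{lem:modular-norm} to the flux, with the conjugate exponent $p'(\cdot)$ in place of $p(\cdot)$, and then use the modular identity~\eqref{eq:flux-modular}.

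\emph{Extrema of the conjugate exponent.} The map $s\mapsto s/(s-1)$ is decreasing on $(1,\infty)$. Hence
\[
(p')^-=\frac{p^+}{p^+-1},\qquad (p')^+=\frac{p^-}{p^--1}.
\]
Since $p^+<\infty$ and $p^->2$, these satisfy $1<(p')^-\le(p')^+<2$. Lemma~\ref{lem:modular-norm} therefore applies to $p'(\cdot)$; its proof only uses $1<(p')^-\le(p')^+<\infty$. For $g\in L^{p'(\cdot)}$ this gives
\[
\|g\|_{L^{p'(\cdot)}}\le\rho_{p'(\cdot)}(g)^{1/(p')^-}+\rho_{p'(\cdot)}(g)^{1/(p')^+}.
\]
Moreover,
\[
\frac1{(p')^-}=\frac{p^+-1}{p^+}=a_+,\qquad \frac1{(p')^+}=\frac{p^--1}{p^-}=a_-.
\]

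\emph{Substitution.} Take $g=J_{p(\cdot)}(u)$. By~\eqref{eq:flux-modular}, $\rho_{p'(\cdot)}(g)=\rho_{p(\cdot)}(\nabla u)$, and substituting gives exactly the stated bound.

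\emph{Where care is needed.} The only delicate point is the swap of extrema under conjugation: $p^+$ determines the smaller exponent $(p')^-$ and $p^-$ determines $(p')^+$. The two-regime bound~\eqref{eq:modular-norm-2} is symmetric in its two terms, so the labelling does not affect the final sum.

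For completeness, I would rederive that bound for a general exponent $q(\cdot)$ with $1<q^-\le q^+<\infty$. Let $z=\|g\|_{L^{q(\cdot)}}>0$. Continuity of $\lambda\mapsto\rho_{q(\cdot)}(g/\lambda)$, which holds since $q^+<\infty$, gives $\rho_{q(\cdot)}(g/z)=1$.
\begin{itemize}
\item If $z\ge1$, then $1=\rho_{q(\cdot)}(g/z)\le z^{-q^-}\rho_{q(\cdot)}(g)$, so $z\le\rho_{q(\cdot)}(g)^{1/q^-}$.
\item If $z<1$, then similarly $z\le\rho_{q(\cdot)}(g)^{1/q^+}$.
\end{itemize}
Summing the two bounds covers both cases, and the case $z=0$ is trivial.
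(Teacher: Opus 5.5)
Your proof is correct and is the paper's argument: it likewise notes $(p')^-=p^+/(p^+-1)$ and $(p')^+=p^-/(p^--1)$, applies \eqref{eq:modular-norm-2} with exponent $p'(\cdot)$, and substitutes \eqref{eq:flux-modular}. Your case-by-case rederivation of the two-regime bound correctly expands the one-line justification given in the proof of Lemma~\ref{lem:modular-norm}.
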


\begin{proof}
Since $(p')^-=p^+/(p^+-1)$ and $(p')^+=p^-/(p^--1)$, applying~\eqref{eq:modular-norm-2} with exponent $p'(\cdot)$ and using~\eqref{eq:flux-modular} gives the result.
\end{proof}

\begin{proposition}\label{prop:px-properties}
For all $u,v\in V$,
\begin{equation}\label{eq:px-monotone}
\pair{\Dpx u-\Dpx v}{u-v}_{V^*,V}
=-\int_{\mathbb T^d}
\bigl(J_{p(\cdot)}(u)-J_{p(\cdot)}(v)\bigr)
\cdot\nabla(u-v)\dd x\le0,
\end{equation}
\begin{equation}\label{eq:px-coercive}
\pair{\Dpx u}{u}_{V^*,V}
=-\rho_{p(\cdot)}(\nabla u),
\end{equation}
and $\lambda\mapsto\pair{\Dpx(u+\lambda v)}{w}_{V^*,V}$ is continuous on $\mathbb R$ for $u,v,w\in V$. Moreover,

\begin{equation}\label{eq:px-dual-bound}
\norm{\Dpx u}_{V^*}
\le2\norm{J_{p(\cdot)}(u)}_{L^{p'(\cdot)}}
\le2\left(
\rho_{p(\cdot)}(\nabla u)^{a_-}
+\rho_{p(\cdot)}(\nabla u)^{a_+}
\right).
\end{equation}
\end{proposition}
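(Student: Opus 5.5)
We verify the four assertions of Proposition~\ref{prop:px-properties} one at a time, all from pointwise properties of the vector map $a(x,\xi):=|\xi|^{p(x)-2}\xi$ together with Lemma~\ref{lem:modular-norm} and Lemma~\ref{lem:flux-norm}. By~\eqref{eq:flux-modular}, $J_{p(\cdot)}(u)\in L^{p'(\cdot)}$ whenever $\nabla u\in L^{p(\cdot)}$. The variable-exponent H\"older inequality~\eqref{eq:px-holder} then shows that $w\mapsto-\int J_{p(\cdot)}(u)\cdot\nabla w\dd x$ is a bounded linear functional on $V$. This defines $\Dpx u\in V^*$ through
\[
\pair{\Dpx u}{w}_{V^*,V}=-\int_{\mathbb T^d}J_{p(\cdot)}(u)\cdot\nabla w\dd x,
\]
which is the weak form of the divergence on the torus; no boundary terms arise.

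\textbf{Dual bound and coercivity.} By~\eqref{eq:px-holder},
\[
\abs{\pair{\Dpx u}{w}}\le 2\norm{J_{p(\cdot)}(u)}_{L^{p'(\cdot)}}\norm{\nabla w}_{L^{p(\cdot)}},
\]
and since $\norm{w}_V=\norm{\nabla w}_{L^{p(\cdot)}}$ this gives the first inequality in~\eqref{eq:px-dual-bound}. Lemma~\ref{lem:flux-norm} then gives the second. For~\eqref{eq:px-coercive}, take $w=u$ and use $J_{p(\cdot)}(u)\cdot\nabla u=|\nabla u|^{p(x)}$ pointwise.

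\textbf{Monotonicity.} The identity in~\eqref{eq:px-monotone} is linearity of the pairing in its first argument. For the sign, it suffices to show the pointwise inequality
\[
\bigl(a(x,\xi)-a(x,\zeta)\bigr)\cdot(\xi-\zeta)\ge0,\qquad \xi,\zeta\in\mathbb R^d,
\]
for each fixed $x$. This holds because $a(x,\cdot)=\nabla_\xi\bigl(|\xi|^{p(x)}/p(x)\bigr)$ is the gradient of a convex function. Alternatively, expand the product and apply Cauchy--Schwarz to get
\[
\bigl(|\xi|^{p-1}-|\zeta|^{p-1}\bigr)\bigl(|\xi|-|\zeta|\bigr)\ge0 .
\]
Integrating over $\mathbb T^d$ gives the sign in~\eqref{eq:px-monotone}.

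\textbf{Hemicontinuity.} This is the only step needing a limiting argument. Fix $u,v,w\in V$ and $\lambda_n\to\lambda$, with $|\lambda_n|\le L$. Pointwise, $a(x,\nabla u+\lambda_n\nabla v)\cdot\nabla w\to a(x,\nabla u+\lambda\nabla v)\cdot\nabla w$ a.e., by continuity of $a(x,\cdot)$. For domination, use
\[
|a(x,\nabla u+\lambda_n\nabla v)|\le |\nabla u+\lambda_n\nabla v|^{p(x)-1}\le G^{p(x)-1},\qquad G:=|\nabla u|+L|\nabla v|.
\]
Here $G\in L^{p(\cdot)}$, so $G^{p(x)-1}\in L^{p'(\cdot)}$ by~\eqref{eq:flux-modular}-type reasoning. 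Hence $G^{p(x)-1}|\nabla w|\in L^1$ by~\eqref{eq:px-holder}, and dominated convergence yields continuity in $\lambda$.

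The main (mild) obstacle is keeping track of the variable exponent in the domination. It is handled by working with modulars rather than norms: $\rho_{p'(\cdot)}(G^{p(\cdot)-1})=\rho_{p(\cdot)}(G)<\infty$. The latter is finite because $\rho_{p(\cdot)}(G)\le 2^{p^+-1}\bigl(\rho_{p(\cdot)}(\nabla u)+L^{p^+}\rho_{p(\cdot)}(\nabla v)\bigr)$ when $L\ge1$.
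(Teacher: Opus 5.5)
Your proof is correct and follows the paper's argument: pointwise monotonicity of $\xi\mapsto|\xi|^{p(x)-2}\xi$ and testing with $u$ give \eqref{eq:px-monotone}--\eqref{eq:px-coercive}; \eqref{eq:px-holder} plus Lemma~\ref{lem:flux-norm} give \eqref{eq:px-dual-bound}; and dominated convergence gives hemicontinuity. The only cosmetic difference is the integrable majorant: you use $G^{p(x)-1}|\nabla w|$ with \eqref{eq:px-holder} and $\rho_{p'(\cdot)}(G^{p(\cdot)-1})=\rho_{p(\cdot)}(G)$, whereas the paper applies Young's inequality pointwise to obtain $C_L(|\nabla u|^{p(x)}+|\nabla v|^{p(x)}+|\nabla w|^{p(x)})$.
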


\begin{proof}
Pointwise monotonicity and integration by parts give~\eqref{eq:px-monotone}--\eqref{eq:px-coercive}. For $|\lambda|\le L$, Young's inequality yields
\[
|\nabla(u+\lambda v)|^{p(x)-1}|\nabla w|
\le C_L\bigl(|\nabla u|^{p(x)}+|\nabla v|^{p(x)}+|\nabla w|^{p(x)}\bigr).
\]
Dominated convergence gives hemicontinuity. Finally,~\eqref{eq:px-holder} and Lemma~\ref{lem:flux-norm} give~\eqref{eq:px-dual-bound} by taking the supremum over $\|v\|_V\le1$.
\end{proof}

\subsection{A uniform modular lower bound}

\begin{lemma}\label{lem:modular-lower}
Let $R>0$ and $m=p^+$.  Every $u\in V$ satisfies the following estimate
\begin{equation}\label{eq:modular-two-regime}
\rho_{p(\cdot)}(\nabla u)
\ge
\min\left\{(\hat{C}_e\hat{C}_p)^{-m}\norm{u}_{L^2}^{m},1\right\}.
\end{equation}
Consequently, there exists
$
c_R:=\min\left\{(\hat{C}_e\hat{C}_p)^{-m},(1+R)^{-m}\right\}>0
$
such that every $u\in V$ with $\norm{u}_{L^2}\le R$ satisfies
\begin{equation}\label{eq:modular-lower}
\rho_{p(\cdot)}(\nabla u)
\ge c_R\norm{u}_{L^2}^{m}.
\end{equation}
\end{lemma}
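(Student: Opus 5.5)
The plan is to reduce everything to the Luxemburg norm of $\nabla u$ and then split into two regimes according to whether this norm is at most or above one. Set $z:=\norm{\nabla u}_{L^{p(\cdot)}}=\norm{u}_V$. Combining the Poincar\'e inequality~\eqref{eq:px-poincare} with the embedding~\eqref{eq:embedding-L2} gives
\[
\norm{u}_{L^2}\le \hat{C}_e\norm{u}_{L^{p(\cdot)}}\le \hat{C}_e\hat{C}_p\, z,
\qquad\text{that is,}\qquad z\ge (\hat{C}_e\hat{C}_p)^{-1}\norm{u}_{L^2}.
\]
The lower half of~\eqref{eq:modular-norm-1}, applied to $f=\nabla u$, gives $\rho_{p(\cdot)}(\nabla u)\ge\min\{z^{p^-},z^{p^+}\}$.

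\emph{Case $z\le1$.} The minimum equals $z^{p^+}=z^m$. By the bound above, this is at least $(\hat{C}_e\hat{C}_p)^{-m}\norm{u}_{L^2}^m$.

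\emph{Case $z>1$.} The minimum equals $z^{p^-}>1$, so in particular $\rho_{p(\cdot)}(\nabla u)\ge1$.

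In either case $\rho_{p(\cdot)}(\nabla u)$ dominates the minimum in~\eqref{eq:modular-two-regime}, which proves the first claim. If $u=0$ the statement is trivial, so one may assume $z>0$ throughout.

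For the consequence~\eqref{eq:modular-lower}, fix $\norm{u}_{L^2}\le R$. Since $R<1+R$, we have $\norm{u}_{L^2}^m<(1+R)^m$, and hence $1\ge (1+R)^{-m}\norm{u}_{L^2}^m$. Both entries of the minimum in~\eqref{eq:modular-two-regime} are therefore at least $\min\{(\hat{C}_e\hat{C}_p)^{-m},(1+R)^{-m}\}\norm{u}_{L^2}^m=c_R\norm{u}_{L^2}^m$. This gives~\eqref{eq:modular-lower}.

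There is no real obstacle here. The only point needing care is to check that the Poincar\'e and embedding constants compose in the stated order: the norm inequality~\eqref{eq:modular-norm-1} must be applied to $\nabla u$ in $L^{p(\cdot)}$, not to $u$ itself. It is also in the small-norm regime that the exponent $p^+$, rather than $p^-$, is forced.
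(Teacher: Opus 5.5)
Your proposal is correct and follows essentially the same argument as the paper. Both proofs set $z=\norm{\nabla u}_{L^{p(\cdot)}}$, combine the Poincar\'e inequality and the $L^2$ embedding to get $\norm{u}_{L^2}\le\hat C_e\hat C_p z$, split into the regimes $z\le1$ and $z>1$ via the lower bound in~\eqref{eq:modular-norm-1}, and then deduce~\eqref{eq:modular-lower} from $1\ge(1+R)^{-m}\norm{u}_{L^2}^m$ on the ball.
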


\begin{proof}
Set $z=\|\nabla u\|_{L^{p(\cdot)}}$. By~\eqref{eq:px-poincare}--\eqref{eq:embedding-L2}, $\|u\|_{L^2}\le\hat C_e\hat C_pz$, and hence
\[
\rho_{p(\cdot)}(\nabla u)\ge
\begin{cases}
z^{p^+}\ge(\hat C_e\hat C_p)^{-m}\|u\|_{L^2}^m,&z\le1,\\
z^{p^-}\ge1,&z>1.
\end{cases}
\]
This gives~\eqref{eq:modular-two-regime}. If $\|u\|_{L^2}\le R$, then $1\ge(1+R)^{-m}\|u\|_{L^2}^m$, which proves~\eqref{eq:modular-lower}.
\end{proof}

\begin{remark}\label{rem:optimal-coercivity-constant}
The intrinsic coefficient
\begin{equation*}
c_R^{\mathrm{opt}}:=\inf_{\substack{u\in V\\0<\|u\|_{L^2}\le R}}
\frac{\rho_{p(\cdot)}(\nabla u)}{\|u\|_{L^2}^{p^+}}
\end{equation*}
satisfies $0<c_R\le c_R^{\mathrm{opt}}<\infty$ by Lemma~\ref{lem:modular-lower} and evaluation at a nonzero smooth function in the ball. It is the largest admissible coefficient in~\eqref{eq:modular-lower} and may replace $c_R$ in~\eqref{eq:baseline-decay}.

\end{remark}

\begin{proof}[Proof of Proposition~\ref{prop:pplus-optimality}]
By continuity of \(p\), we have \(\max p=p^+\). Given \(q<p^+\), choose \(\delta>0\) and a nonempty open set \(U\) such that \(p\ge q+\delta\) on \(U\). Let \(\psi\in C_c^\infty(U)\) be nonconstant and define \(\varphi=\psi-\int\psi\,\mathrm{d}x\). Then, for \(0<\varepsilon\le1\), the nonzero mean-zero function \(u_\varepsilon=\varepsilon\varphi\) satisfies
\[
\rho_{p(\cdot)}(\nabla u_\varepsilon)
=\int_U\varepsilon^{p(x)}|\nabla\varphi|^{p(x)}\dd x
\le\varepsilon^{q+\delta}\rho_{p(\cdot)}(\nabla\varphi).
\]
Therefore
\[
\frac{\rho_{p(\cdot)}(\nabla u_\varepsilon)}{\|u_\varepsilon\|_{L^2}^{q}}
\le\frac{\rho_{p(\cdot)}(\nabla\varphi)}{\|\varphi\|_{L^2}^{q}}\varepsilon^\delta
\longrightarrow0,\qquad \|u_\varepsilon\|_{L^2}\longrightarrow0.
\]
This completes the proof.
\end{proof}

\begin{corollary}\label{cor:baseline-decay}
Let $R>0$ and $m=p^+$. Suppose a solution satisfies the energy identity
\[
\norm{u(t)}_{L^2}^2+2\int_0^t\rho_{p(\cdot)}(\nabla u(s))\dd s
=\norm{u_0}_{L^2}^2
\]
and $\norm{u_0}_{L^2}\le R$.  Then
\begin{equation}\label{eq:baseline-decay}
\norm{u(t)}_{L^2}
\le
\frac{\norm{u_0}_{L^2}}
{\left(1+c_R(m-2)t\norm{u_0}_{L^2}^{m-2}\right)^{1/(m-2)}}.
\end{equation}
\end{corollary}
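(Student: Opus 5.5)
The plan is an ODE comparison argument applied to $y(t):=\norm{u(t)}_{L^2}^2$. First, the energy identity shows that $y$ is absolutely continuous and nonincreasing, with $y'(t)=-2\rho_{p(\cdot)}(\nabla u(t))$ for a.e.\ $t$. Since $y(t)\le y(0)=\norm{u_0}_{L^2}^2\le R^2$, we have $u(t)\in V$ with $\norm{u(t)}_{L^2}\le R$ for a.e.\ $t$. Lemma~\ref{lem:modular-lower}, via \eqref{eq:modular-lower}, then gives the differential inequality
\[
y'(t)\le -2c_R\,y(t)^{m/2}\qquad\text{for a.e. } t>0 .
\]

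Next, this inequality is integrated. If $u_0=0$, then $y\equiv0$ and there is nothing to prove. Otherwise, set $\tau:=\inf\{t:y(t)=0\}\in(0,\infty]$. On $[0,\tau)$, the function $y^{-(m-2)/2}$ is locally absolutely continuous, because $y$ is bounded away from zero on compact subintervals. Its a.e.\ derivative is
\[
-\tfrac{m-2}{2}\,y^{-m/2}y'\ \ge\ c_R(m-2).
\]
Integrating from $0$ to $t$ yields
\[
y(t)^{-(m-2)/2}\ \ge\ \norm{u_0}_{L^2}^{-(m-2)}+c_R(m-2)t .
\]
Raising to the power $-1/(m-2)$ and multiplying through by $\norm{u_0}_{L^2}$ gives exactly \eqref{eq:baseline-decay} for $t<\tau$. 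For $t\ge\tau$, monotonicity gives $y(t)=0$, so \eqref{eq:baseline-decay} holds trivially. Note that $m=p^+>2$, so $m-2>0$ and all the exponents make sense.

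The only delicate point is justifying the chain rule for $y^{-(m-2)/2}$ with merely absolutely continuous $y$. Restricting to compact subintervals of $[0,\tau)$ handles this, since there $y\ge\varepsilon>0$ and $s\mapsto s^{-(m-2)/2}$ is Lipschitz on $[\varepsilon,R^2]$. An alternative that avoids this issue is to compare $y$ directly with the explicit solution of $z'=-2c_Rz^{m/2}$, $z(0)=y(0)$, by a standard comparison lemma.
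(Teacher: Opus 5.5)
Your proposal is correct and is essentially the paper's argument. The energy identity and \eqref{eq:modular-lower} give $y'\le-2c_Ry^{m/2}$, integrating the resulting lower bound $c_R(m-2)$ on the derivative of $y^{-(m-2)/2}$ while $y>0$ yields \eqref{eq:baseline-decay}, and monotonicity of the norm covers the times after $y$ reaches zero. Your restriction to compact subintervals where $y$ is bounded away from zero justifies the chain rule, which the paper leaves implicit; note only that $u(t)\in V$ for a.e.\ $t$ comes from the finiteness of $\int_0^t\rho_{p(\cdot)}(\nabla u(s))\dd s$, not from the bound $y\le R^2$.
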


\begin{proof}
For $y=\|u\|_{L^2}^2$, the energy identity and Lemma~\ref{lem:modular-lower} give
\[
y'\le-2c_Ry^{m/2},\qquad
\frac{\dd}{\dd t}y^{-(m-2)/2}\ge c_R(m-2)\quad\text{where }y>0.
\]
Integrating and taking square roots gives~\eqref{eq:baseline-decay}. If \(y\) reaches zero, then norm monotonicity implies that \(u=0\) for all later times.
\end{proof}

\subsection{A lower bound for the unperturbed equation}\label{subsec:lower-bound}

We consider the following unperturbed equation on the zero-mean subspace:
\begin{equation}\label{eq:unperturbed}
\partial_t u=\Delta_{p(\cdot)}u,\qquad u(0)=u_0,
\end{equation}
and we write
\begin{equation*}
\mathcal J(u):=\int_{\mathbb T^d}\frac{\abs{\nabla u(x)}^{p(x)}}{p(x)}\dd x
\quad(u\in V),
\qquad \mathcal J:=+\infty \text{ on }H\setminus V .
\end{equation*}
Thanks to $p^-\le p(x)\le p^+$, we have
\begin{equation}\label{eq:J-rho-comparison}
\frac1{p^+}\,\rho_{p(\cdot)}(\nabla u)\;\le\;\mathcal J(u)\;\le\;\frac1{p^-}\,\rho_{p(\cdot)}(\nabla u).
\end{equation}
The functional $\mathcal J$ is convex, proper and lower semicontinuous on $H$, and $\partial\mathcal J(u)=-\Delta_{p(\cdot)}u$ whenever the right-hand side belongs to $H$.  By the theory of gradient flows of convex functionals in Hilbert space~\cite{Brezis1973}, for every zero-mean $u_0\in H$, equation~\eqref{eq:unperturbed} admits a unique strong solution $u\in C([0,\infty);H)$ which is locally Lipschitz from $(0,\infty)$ into $H$, satisfies $u(t)\in D(\partial\mathcal J)$ for every $t>0$, and obeys
\begin{equation}\label{eq:gradient-flow-identities}
\partial_tu(t)=\Delta_{p(\cdot)}u(t),
\qquad
\frac{\dd}{\dd t}\mathcal J(u(t))=-\norm{\partial_tu(t)}_{L^2}^2
\end{equation}
for almost every $t>0$.  If in addition $u_0\in V$, then $\mathcal J(u(t))\to\mathcal J(u_0)$ as $t\downarrow0$.

\begin{lemma}\label{lem:monotone-scaling}
Let $u_0\in V$ have zero mean and $u_0\ne0$, and let $u$ solve~\eqref{eq:unperturbed}.  Set
\[
y(t):=\norm{u(t)}_{L^2}^2,\qquad
\mathcal J(t):=\mathcal J(u(t)),\qquad
\rho(t):=\rho_{p(\cdot)}(\nabla u(t)).
\]
Let $T_*:=\sup\{\,\tau>0:\ y>0 \text{ on }[0,\tau)\,\}$.  Then $t\mapsto\mathcal J(t)\,y(t)^{-p^-/2}$ is nonincreasing on $[0,T_*)$.
\end{lemma}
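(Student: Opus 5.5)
The plan is to differentiate $\log\bigl(\mathcal J(t)\,y(t)^{-p^-/2}\bigr)$ and show the derivative is nonpositive almost everywhere on $(0,T_*)$. The ingredients are the gradient-flow identities~\eqref{eq:gradient-flow-identities}, the energy identity, and a Cauchy--Schwarz inequality.

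First I record the regularity. On $(0,T_*)$ the solution is locally Lipschitz into $H$, so $y$ is locally Lipschitz. Testing $\partial_tu=\Delta_{p(\cdot)}u$ with $u$ and using~\eqref{eq:px-coercive} gives $y'=2(\partial_tu,u)=-2\rho(t)$ a.e. By~\eqref{eq:gradient-flow-identities}, $\mathcal J$ is nonincreasing and absolutely continuous on compact subintervals of $(0,\infty)$, with $\mathcal J'=-\|\partial_tu\|_{L^2}^2$. Continuity at $t=0$ holds because $u_0\in V$ gives $\mathcal J(t)\to\mathcal J(u_0)$, and $y$ is continuous. It therefore suffices to prove monotonicity on $(0,T_*)$. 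If $\mathcal J(t_0)=0$ at some time, then $\nabla u(t_0)=0$, so $u(t_0)=0$ by zero mean. This contradicts $y>0$, hence $\mathcal J>0$ on $[0,T_*)$.

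The key step is the inequality. A.e. on $(0,T_*)$,
\[
\frac{\dd}{\dd t}\bigl(\mathcal J y^{-p^-/2}\bigr)=y^{-p^-/2-1}\Bigl(-\|\partial_tu\|_{L^2}^2\,y+\tfrac{p^-}{2}\cdot2\rho\,\mathcal J\Bigr).
\]
I need $p^-\rho\,\mathcal J\le\|\partial_tu\|_{L^2}^2\|u\|_{L^2}^2$. Cauchy--Schwarz gives $\rho=-(\partial_tu,u)\le\|\partial_tu\|_{L^2}\|u\|_{L^2}$, so $\rho^2\le\|\partial_tu\|^2y$. By~\eqref{eq:J-rho-comparison}, $\mathcal J\le\rho/p^-$, hence $p^-\rho\mathcal J\le\rho^2\le\|\partial_tu\|^2y$. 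The derivative is therefore $\le0$.

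The main obstacle is the product rule for $\mathcal J\,y^{-p^-/2}$ when $\mathcal J$ is only known through~\eqref{eq:gradient-flow-identities}. Both factors are absolutely continuous on compact subintervals $[a,b]\subset(0,T_*)$: $y$ is Lipschitz and bounded below there, and $\mathcal J$ is absolutely continuous by Brezis' theory, since $\mathcal J(u(t))$ is absolutely continuous for $t\ge a>0$. The product is then absolutely continuous with a.e. derivative given by the product rule, and integrating the nonpositive derivative gives monotonicity on $[a,b]$. Letting $a\downarrow0$, using the continuity at $0$ noted above, extends the monotonicity to $[0,T_*)$.
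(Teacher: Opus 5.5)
Your proposal is correct and follows essentially the same argument as the paper. Both use the product-rule derivative on compact subintervals of $(0,T_*)$, the Cauchy--Schwarz bound $\rho^2\le\|\partial_tu\|_{L^2}^2y=-\mathcal J'y$ combined with $p^-\mathcal J\le\rho$, and a final limit $a\downarrow0$ using $\mathcal J(u(t))\to\mathcal J(u_0)$. Since you differentiate the product directly, the logarithm and the positivity of $\mathcal J$ that you mention are not needed.
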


\begin{proof}
By continuity, \(T_*>0\). On every compact subinterval of \((0,T_*)\), both \(y\) and \(\mathcal J\) are absolutely continuous. Testing \eqref{eq:unperturbed} against \(u\) and using \eqref{eq:gradient-flow-identities} yields
\begin{equation*}
y'=-2\rho,\qquad
\rho^2=|\langle\partial_tu,u\rangle|^2
\le\|\partial_tu\|_{L^2}^2y=-\mathcal J'y.
\end{equation*}
By~\eqref{eq:J-rho-comparison}, $p^-\mathcal J\le\rho$. Thus, almost everywhere on $(0,T_*)$,
\[
\frac{\dd}{\dd t}\bigl(\mathcal J y^{-p^-/2}\bigr)
=y^{-p^-/2-1}\bigl(y\mathcal J'+p^-\mathcal J\rho\bigr)
\le y^{-p^-/2-1}\rho(p^-\mathcal J-\rho)\le0.
\]
Integrating the above inequality from \(s\) to \(t<T_*\) and letting \(s\downarrow0\), while using
\(y(s)\to y(0)>0\) and \(\mathcal J(u(s))\to\mathcal J(u_0)\), we obtain
\[
\mathcal J(t)y(t)^{-p^-/2}\le \mathcal J(0)y(0)^{-p^-/2}.
\]
This implies the desired conclusion.
\end{proof}

\begin{theorem}\label{thm:lower-bound}
Let $u_0\in V$ have zero mean and $u_0\ne0$, and let $u$ be the solution of~\eqref{eq:unperturbed}.  Set
\begin{equation}\label{eq:def-Lambda0}
\Lambda_0(u_0):=\frac{p^+\,\mathcal J(u_0)}{\norm{u_0}_{L^2}^2}
\;\le\;\frac{p^+}{p^-}\cdot\frac{\rho_{p(\cdot)}(\nabla u_0)}{\norm{u_0}_{L^2}^2}\;<\;\infty.
\end{equation}
Then for every $t\ge0$, $u(t)\ne0$ and
\begin{equation}\label{eq:lower-bound}
\norm{u(t)}_{L^2}
\;\ge\;
\frac{\norm{u_0}_{L^2}}
{\bigl(1+(p^--2)\,\Lambda_0(u_0)\,t\bigr)^{1/(p^--2)}}.
\end{equation}
\end{theorem}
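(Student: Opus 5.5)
We derive the lower bound from the monotonicity in Lemma~\ref{lem:monotone-scaling}, combined with the energy identity $y'=-2\rho$. Keep the notation $y,\mathcal J,\rho,T_*$ of that lemma. On $[0,T_*)$ the lemma gives
\[
\mathcal J(t)\le \mathcal J(u_0)\,y_0^{-p^-/2}\,y(t)^{p^-/2},\qquad y_0:=\|u_0\|_{L^2}^2>0 .
\]
Using \eqref{eq:J-rho-comparison} in the form $\rho\le p^+\mathcal J$ we obtain, almost everywhere on $(0,T_*)$,
\[
y'=-2\rho\ge -2p^+\mathcal J(u_0)\,y_0^{-p^-/2}\,y^{p^-/2}=-2\Lambda_0(u_0)\,y_0^{1-p^-/2}\,y^{p^-/2}.
\]
This is a differential inequality of Bernoulli type.

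Next we integrate. Since $y>0$ on $[0,T_*)$ and $y$ is absolutely continuous on compact subintervals, we compute
\[
\frac{\dd}{\dd t}y^{-(p^--2)/2}=-\tfrac{p^--2}{2}\,y^{-p^-/2}y'\le (p^--2)\Lambda_0\,y_0^{-(p^--2)/2}.
\]
Integrating from $s$ to $t$, letting $s\downarrow0$ and using continuity of $y$ at $0$ gives
\[
y(t)^{-(p^--2)/2}\le y_0^{-(p^--2)/2}\bigl(1+(p^--2)\Lambda_0t\bigr).
\]
Raising to the power $-1/(p^--2)$, which reverses the inequality, and taking square roots yields \eqref{eq:lower-bound} for $t<T_*$.

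It remains to show $T_*=\infty$, i.e.\ $u(t)\ne0$ for all $t$. This is the only delicate point. Suppose $T_*<\infty$. By continuity $y(T_*)=0$. But on $[0,T_*)$ the bound just proved gives $y(t)\ge y_0(1+(p^--2)\Lambda_0T_*)^{-2/(p^--2)}>0$, uniformly in $t<T_*$. Letting $t\uparrow T_*$ contradicts $y(T_*)=0$. Hence $T_*=\infty$, and \eqref{eq:lower-bound} holds for all $t\ge0$, with $t=0$ trivial.

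The finiteness of $\Lambda_0$ in \eqref{eq:def-Lambda0} is immediate from \eqref{eq:J-rho-comparison} and $u_0\in V$. I expect the main care to be in justifying the chain rule for $y^{-(p^--2)/2}$ near $t=0$, where only continuity of $y$ is available. Integrating on $[s,t]$ and passing to the limit, as in Lemma~\ref{lem:monotone-scaling}, handles this.
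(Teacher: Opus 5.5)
Your proposal is correct and follows the paper's proof essentially step for step. Both arguments combine the monotonicity of $\mathcal J(t)\,y(t)^{-p^-/2}$ from Lemma~\ref{lem:monotone-scaling} with $\rho\le p^+\mathcal J$ to integrate the resulting Bernoulli-type inequality for $y^{-(p^--2)/2}$. Both then use the uniform lower bound on $[0,T_*)$, together with continuity of $y$, to rule out $T_*<\infty$.
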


\begin{proof}
Use the notation of Lemma~\ref{lem:monotone-scaling} and set $y_0=\|u_0\|_{L^2}^2$. On $(0,T_*)$, that lemma and~\eqref{eq:J-rho-comparison} give
\[
-y'=2\rho\le2p^+\mathcal J(t)
\le2p^+\mathcal J(u_0)\left(\frac{y}{y_0}\right)^{p^-/2}.
\]
Therefore,
\[
\frac{\dd}{\dd t}y^{-(p^--2)/2}
\le(p^--2)p^+\mathcal J(u_0)y_0^{-p^-/2},
\]
and integration yields
\[
y(t)^{-(p^--2)/2}
\le y_0^{-(p^--2)/2}
\bigl(1+(p^--2)\Lambda_0(u_0)t\bigr).
\]
If $T_*<\infty$, this estimate bounds $y$ away from zero on $[0,T_*)$, contradicting continuity and the definition of $T_*$. Thus $T_*=\infty$, and solving for $y$ gives~\eqref{eq:lower-bound}. The estimate for $\Lambda_0$ follows from~\eqref{eq:J-rho-comparison}.
\end{proof}

\begin{remark}\label{rem:variable-example}
Let
\[
p(x)=p_0+\varepsilon\cos(2\pi x_1),\qquad
2<p_0-\varepsilon<p_0+\varepsilon<p_{\mathrm c}.
\]
Then Assumption~\ref{ass:p} holds with $p^-=p_0-\varepsilon$ and $p^+=p_0+\varepsilon$. For nonzero $u_0\in V$ with $\|u_0\|_{L^2}\le R$, Corollary~\ref{cor:baseline-decay} and Theorem~\ref{thm:lower-bound} give
\[
\frac{\|u_0\|_{L^2}}
{(1+(p_0-\varepsilon-2)\Lambda_0(u_0)t)^{1/(p_0-\varepsilon-2)}}
\le\|u^{\mathrm{det}}(t)\|_{L^2}
\le
\frac{\|u_0\|_{L^2}}
{(1+c_R(p_0+\varepsilon-2)t\|u_0\|_{L^2}^{p_0+\varepsilon-2})^{1/(p_0+\varepsilon-2)}}.
\]

The two powers differ when $\varepsilon>0$. Theorem~\ref{thm:exponential-main} gives any prescribed exponential rate, with a noise choice uniform on $\|u_0\|_{L^2}\le R$ and frequency growth quantified in Remark~\ref{rem:frequency-growth}.
\end{remark}

\subsection{Iteration inequalities and heat semigroup estimates}

\begin{lemma}\label{lem:iteration}
Let $m>2$ and $h=(m-2)/2$. For $x\ge0$,
\begin{equation}\label{eq:tangent}
1-\frac{x}{h}\le(1+x)^{-1/h}.
\end{equation}
If $a,b,x,y,z\ge0$ satisfy $y\le x/(1+ax^h)^{1/h}$ and
$z\le y/(1+by^h)^{1/h}$, then
\begin{equation}\label{eq:iteration}
z\le\frac{x}{(1+(a+b)x^h)^{1/h}}.
\end{equation}
\end{lemma}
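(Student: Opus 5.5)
The plan is to prove the two inequalities in turn: first the tangent-line bound \eqref{eq:tangent}, which is pure convexity, and then the iteration \eqref{eq:iteration}, which comes from rewriting both hypotheses in terms of negative powers so that they become additive.

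\emph{Tangent bound.} Consider $f(x)=(1+x)^{-1/h}$ on $[0,\infty)$. It is convex, since $f''(x)=\tfrac1h(\tfrac1h+1)(1+x)^{-1/h-2}>0$. Its tangent line at $x=0$ is $1-x/h$, because $f(0)=1$ and $f'(0)=-1/h$. A convex function lies above its tangent lines, which gives \eqref{eq:tangent} for all $x\ge0$. For $x\ge h$ the inequality is trivial anyway, since the left side is then nonpositive.

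\emph{Iteration.} The idea is that the map $s\mapsto s^{-h}$ converts the hypotheses into linear ones. First dispose of the degenerate cases: if $x=0$, then $y=0$ and hence $z=0$, so \eqref{eq:iteration} holds; likewise if $y=0$ or $z=0$ there is nothing to prove. Now assume $x,y,z>0$.
\begin{enumerate}[label=(\roman*)]
\item Raise $y\le x(1+ax^h)^{-1/h}$ to the power $-h$; this reverses the inequality and gives $y^{-h}\ge x^{-h}(1+ax^h)=x^{-h}+a$.
\item In the same way, the second hypothesis gives $z^{-h}\ge y^{-h}+b$.
\item Combining (i) and (ii) yields $z^{-h}\ge x^{-h}+a+b=x^{-h}\bigl(1+(a+b)x^h\bigr)$.
\end{enumerate}
Raising (iii) to the power $-1/h$, which again reverses the inequality, gives exactly \eqref{eq:iteration}.

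There is no real obstacle here. The only point requiring care is the handling of zero values, which would make the negative powers undefined, and this is settled by the case split above.
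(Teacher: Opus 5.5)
Your proposal is correct and follows the paper's proof. The paper also derives \eqref{eq:tangent} from convexity of $(1+x)^{-1/h}$, and it obtains \eqref{eq:iteration} through the same chain $z^{-h}\ge y^{-h}+b\ge x^{-h}+a+b$ for positive $x,y,z$, treating the zero cases as immediate, just as you do.
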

\begin{proof}
Convexity gives~\eqref{eq:tangent}. For positive $x,y,z$, the hypotheses yield $z^{-h}\ge y^{-h}+b\ge x^{-h}+a+b$, which is~\eqref{eq:iteration}. The zero cases are immediate.
\end{proof}

Let $S(t):=e^{t\Delta}$ be the heat semigroup on $\mathbb T^d$.  Write
\[
e_k(x):=e^{2\pi i k\cdot x},
\qquad
\lambda_k:=4\pi^2\abs{k}^2,
\qquad k\in\mathbb Z^d.
\]
For complex Fourier modes, we employ the canonical complexifications of \(H\), \(V\), and \(V^*\), where
\[
\pair{f}{g}_{L^2}:=\int_{\mathbb T^d}f\overline g\,\mathrm{d}x
\]
is linear in the first variable. All estimates then restrict to the original real spaces. On the zero-mean subspace, the spectral gap equals \(\lambda_1=4\pi^2\). 
Moreover, under Assumption~\ref{ass:p}, periodic heat-kernel convolution is uniformly bounded on \(L^{p(\cdot)}\) for bounded positive times; see~\cite{DieningBook}. 

\begin{lemma}\label{lem:heat-C0-V}
The heat semigroup $S(t)=e^{t\Delta}$ leaves $V$ invariant and is strongly continuous on $V$.  Its adjoint realization on $V^*$ is also strongly continuous.  Consequently, for every $T>0$,
\begin{equation}\label{eq:heat-Vstar-bounded}
\sup_{0\le t\le T}\norm{S(t)}_{\mathcal L(V^*)}<\infty.
\end{equation}
\end{lemma}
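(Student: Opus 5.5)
The plan is to use the fact that $S(t)$ is a Fourier multiplier commuting with derivatives, together with the cited uniform boundedness of periodic heat-kernel convolution on $L^{p(\cdot)}$, and then pass to $V^*$ by duality.

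\emph{Invariance and boundedness on $V$.} For $u\in V$, $S(t)u=G_t*u$ with $G_t$ the periodic heat kernel, which is a probability density. Hence $\int S(t)u\,\mathrm dx=\int u\,\mathrm dx=0$ and $\nabla S(t)u=G_t*\nabla u$. The boundedness of heat-kernel convolution on $L^{p(\cdot)}$ for $t\in[0,T]$ then gives
\[
\norm{S(t)u}_V\le C_T\norm{u}_V .
\]
So $S(t)$ maps $V$ into itself, with $\sup_{t\le T}\norm{S(t)}_{\mathcal L(V)}<\infty$.

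\emph{Strong continuity on $V$.} I would argue by density. For a smooth zero-mean $\varphi$, $\nabla S(t)\varphi\to\nabla\varphi$ uniformly as $t\downarrow0$. Since $|\mathbb T^d|=1$, uniform convergence implies convergence in $L^{p(\cdot)}$, because $\rho_{p(\cdot)}(f/\lambda)\le(\norm{f}_\infty/\lambda)^{p^-}$ once $\norm f_\infty\le\lambda$. Smooth zero-mean functions are dense in $V$; this follows from the log-Hölder smooth density, after subtracting means. Combining density with the uniform bound $C_T$ gives, by the usual $\varepsilon/3$ argument, $S(t)u\to u$ in $V$ as $t\downarrow0$. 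Continuity at $t_0>0$ then follows from the semigroup property and the uniform bound.

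\emph{Adjoint on $V^*$.} I would define $S(t)^*$ by $\pair{S(t)^*f}{v}=\pair{f}{S(t)v}$. It is bounded with the same norm as $S(t)$ on $V$, which already yields \eqref{eq:heat-Vstar-bounded}. By symmetry of the kernel, $S(t)^*$ extends the heat semigroup on $H$ under $V\hookrightarrow H\hookrightarrow V^*$.

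Strong continuity on $V^*$ is the main obstacle, since weak-$*$ continuity is immediate but strong continuity is not. Two routes are available.

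\begin{enumerate}
\item Since $V$ is reflexive, the adjoint of a $C_0$-semigroup on a reflexive space is again $C_0$; this is a standard result in Pazy's book.
\item Alternatively, use density of $H$ in $V^*$, which holds because $V$ is reflexive and dense in $H$. For $f\in H$ and $v\in V$,
\[
\pair{S(t)f-f}{v}=\pair{f}{S(t)v-v}_{L^2}\le\norm{f}_{L^2}\norm{S(t)v-v}_{L^2}.
\]
This alone is not uniform in $\norm v_V\le1$. Instead, take $f$ in the dense set $V\subset V^*$, for which $\norm{S(t)f-f}_{V^*}\le C\norm{S(t)f-f}_{L^2}\to0$. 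Then the uniform bound extends the convergence to all of $V^*$ by the same $\varepsilon/3$ argument.
\end{enumerate}

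I would use the second route, since it is self-contained.
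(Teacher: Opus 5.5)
Your proposal is correct and follows essentially the same route as the paper: uniform boundedness of heat convolution on $L^{p(\cdot)}$, the identity $\nabla S(t)=S(t)\nabla$ and smooth density give the $C_0$ property on $V$, and the dual realization carries the same local bound and is shown strongly continuous on $V^*$ by density. The only cosmetic difference is that the paper approximates in $V^*$ directly by elements of $H$; this works because $\|S(t)f-f\|_{V^*}\le C\|S(t)f-f\|_{L^2}$ for every $f\in H$, so your switch to the smaller dense class $V$ is unnecessary, though harmless.
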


\begin{proof}
Uniform boundedness of periodic heat convolution on $L^{p(\cdot)}$, the identity $\nabla S(t)v=S(t)\nabla v$, and smooth density give strong continuity on $V$. The dual realization
\[
\langle S_{V^*}(t)f,v\rangle:=\langle f,S_V(t)v\rangle
\]
has the same local bound and agrees with $S(t)$ on $H$. Since $H$ is dense in $V^*$, approximation gives strong continuity on $V^*$ and~\eqref{eq:heat-Vstar-bounded}.
\end{proof}

\begin{lemma}\label{lem:heat-variable}
Let $\eta$ be defined by~\eqref{eq:eta-def-intro}.  There exists a constant
$\hat{C}_h=\hat{C}_h(d,p^-,p^+)>0$, independent of $s$, such that for every $\phi\in H$,
\begin{equation*}
\norm{\nabla S(s)\phi}_{L^{p(\cdot)}}
\le \hat{C}_hs^{-\eta}\norm{\phi}_{L^2},
\qquad s>0.
\end{equation*}
Equivalently, $\|S(s)\phi\|_V\le \hat{C}_hs^{-\eta}\|\phi\|_{L^2}$.
\end{lemma}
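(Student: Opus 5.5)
The plan is to reduce the estimate to a constant-exponent one by comparing $L^{p(\cdot)}$ with $L^{p^+}$, and then to use the classical $L^2\to L^r$ heat smoothing on the torus. On $\mathbb T^d$ with $|\mathbb T^d|=1$ and $p(x)\le p^+$, the embedding $L^{p^+}(\mathbb T^d)\hookrightarrow L^{p(\cdot)}(\mathbb T^d)$ holds with norm at most $2$. To see this, normalize $\|f\|_{L^{p^+}}=1$. Then split $|f|^{p(x)}\le 1+|f|^{p^+}$, which gives $\rho_{p(\cdot)}(f/2)\le 1$. Hence it suffices to prove
\[
\norm{\nabla S(s)\phi}_{L^{p^+}}\le C s^{-\eta}\norm{\phi}_{L^2},\qquad s>0,\ \phi\in H,
\]
for a constant $C$ depending only on $d$ and $p^+$. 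The resulting $\hat C_h$ then depends on $(d,p^+)$, which is consistent with the statement.

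For $0<s\le1$, I will write $\nabla S(s)\phi=\nabla S(s/2)\,S(s/2)\phi$ and chain two standard bounds.
\begin{enumerate}[label=(\roman*)]
\item The first is the hypercontractive bound $\|S(s/2)\phi\|_{L^{p^+}}\le C s^{-\frac d2(\frac12-\frac1{p^+})}\|\phi\|_{L^2}$. It follows from Young's convolution inequality with the periodic heat kernel $G_t$, using $\|G_t\|_{L^r(\mathbb T^d)}\le C t^{-\frac d2(1-\frac1r)}$ for $t\le1$ and $1+\frac1{p^+}=\frac1r+\frac12$. This in turn comes from the Gaussian bound $G_t(x)\le C t^{-d/2}e^{-c|x|^2/t}$ plus a bounded remainder for $t\le1$.
\item The second is the gradient bound $\|\nabla S(s/2)g\|_{L^{p^+}}\le C s^{-1/2}\|g\|_{L^{p^+}}$. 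It follows from $\|\nabla G_t\|_{L^1(\mathbb T^d)}\le Ct^{-1/2}$ for $t\le1$.
\end{enumerate}
Multiplying the two bounds gives exactly $s^{-\eta}$ with $\eta=\frac12+\frac d2(\frac12-\frac1{p^+})$. Note that $p^+<p_{\mathrm c}$ is not needed here; it only ensures $\eta<1$ later.

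For $s>1$, I will use the spectral gap on the zero-mean subspace. Write $\nabla S(s)\phi=\nabla S(1/2)\,S(s-1/2)\phi$. Apply step (i) and then step (ii) at time $1/2$, each with a fixed constant. Use $\|S(s-1/2)\phi\|_{L^2}\le e^{-\lambda_1(s-1/2)}\|\phi\|_{L^2}$, valid since $\phi\in H$. This gives $\|\nabla S(s)\phi\|_{L^{p^+}}\le Ce^{-\lambda_1(s-1/2)}\|\phi\|_{L^2}\le C's^{-\eta}\|\phi\|_{L^2}$, since $e^{-\lambda_1 s}s^{\eta}$ is bounded on $[1,\infty)$. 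The equivalent form $\|S(s)\phi\|_V\le\hat C_hs^{-\eta}\|\phi\|_{L^2}$ is just the definition of $\|\cdot\|_V$, together with the fact that $S(s)\phi$ is smooth and has zero mean, so it lies in $V$.

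The only genuinely technical point is the periodic kernel estimates $\|G_t\|_{L^r}$ and $\|\nabla G_t\|_{L^1}$ for $t\le1$. I would obtain them by periodization, $G_t(x)=\sum_{n\in\mathbb Z^d}(4\pi t)^{-d/2}e^{-|x+n|^2/4t}$ on a fundamental cell. The $n=0$ term gives the Euclidean scaling, and the tail sum is uniformly bounded (together with its gradient) for $t\le1$. Everything else is bookkeeping of exponents.
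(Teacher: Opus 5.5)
Your proposal is correct and follows essentially the same route as the paper. Both arguments use the embedding $L^{p^+}\hookrightarrow L^{p(\cdot)}$ and constant-exponent $L^2\to W^{1,p^+}$ heat smoothing for $0<s\le1$, and for $s\ge1$ the factorization $\nabla S(s)=\nabla S(1/2)S(s-1/2)$ together with the spectral gap. The only difference is that you derive the small-time heat estimate from periodic kernel bounds (via $S(s)=S(s/2)S(s/2)$), where the paper cites it from Lunardi; this is a valid way to fill in that step.
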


\begin{proof}
For $0<s\le1$, the embedding $L^{p^+}\hookrightarrow L^{p(\cdot)}$ and the heat estimate~\cite{Lunardi1995} give
\[
\|\nabla S(s)\phi\|_{L^{p(\cdot)}}
\le C\|\nabla S(s)\phi\|_{L^{p^+}}
\le Cs^{-\eta}\|\phi\|_{L^2}.
\]
For $s\ge1$, the semigroup property and the spectral gap yield
\[
\|\nabla S(s)\phi\|_{L^{p(\cdot)}}
\le C\|S(s-1/2)\phi\|_{L^2}
\le Ce^{-\lambda_1(s-1/2)}\|\phi\|_{L^2}
\le Cs^{-\eta}\|\phi\|_{L^2}.
\]
\end{proof}

\begin{corollary}\label{cor:heat-Vstar-H}
There exists $C>0$ such that for every $f\in V^*$ and every $s>0$,
\begin{equation*}
\norm{S(s)f}_H\le Cs^{-\eta}\norm{f}_{V^*}.
\end{equation*}
Here $S(s)$ on $V^*$ is the adjoint realization from Lemma~\ref{lem:heat-C0-V}.
\end{corollary}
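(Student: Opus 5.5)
The plan is to obtain the estimate by duality from Lemma~\ref{lem:heat-variable}. First, since $S(s)$ is self-adjoint on $L^2$, the adjoint realization on $V^*$ (Lemma~\ref{lem:heat-C0-V}) sends $V^*$ into $H$. Concretely, for $f\in H$ and $\phi\in H$,
\[
\pair{S(s)f}{\phi}_{L^2}=\pair{f}{S(s)\phi}_{V^*,V},
\]
because $S(s)\phi\in V$ (heat smoothing, or Lemma~\ref{lem:heat-variable} itself) and the Gelfand triple pairing extends the $L^2$ product. Hence
\[
\abs{\pair{S(s)f}{\phi}_{L^2}}\le\norm{f}_{V^*}\norm{S(s)\phi}_V\le \hat C_h s^{-\eta}\norm{f}_{V^*}\norm{\phi}_{L^2}.
\]
Taking the supremum over $\phi\in H$ with $\norm{\phi}_{L^2}\le1$ gives $\norm{S(s)f}_H\le \hat C_h s^{-\eta}\norm{f}_{V^*}$ for $f\in H$. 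Here one should note that $S(s)f$ has zero mean, so testing against $H$ suffices to compute its $L^2$ norm.

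Second, I will extend the estimate from $H$ to all of $V^*$ by density. Since $H$ is dense in $V^*$, which is used in Lemma~\ref{lem:heat-C0-V}, take $f_n\in H$ with $f_n\to f$ in $V^*$. The bound just proved shows that $S(s)f_n$ is Cauchy in $H$ for fixed $s>0$. Its limit coincides with the adjoint realization $S_{V^*}(s)f$, because $S_{V^*}(s)$ is bounded on $V^*$ and $H\hookrightarrow V^*$ continuously. Passing to the limit then yields the claim with $C=\hat C_h$.

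The only delicate point is the identification step: one has to check that the $V^*$-valued object $S_{V^*}(s)f$ actually lies in $H$ and equals the $H$-limit. This follows from the defining identity $\langle S_{V^*}(s)f,v\rangle=\langle f,S_V(s)v\rangle$ together with uniqueness of limits in $V^*$. Apart from that, the argument is a routine duality and density step, and the constant does not depend on $s$.
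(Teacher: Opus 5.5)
Your proposal is correct and follows essentially the paper's argument: duality against $\phi\in H$ combined with Lemma~\ref{lem:heat-variable}. The only difference is that the paper applies the bound $\abs{\pair{f}{S(s)\phi}}\le Cs^{-\eta}\norm{f}_{V^*}\norm{\phi}_H$ directly to arbitrary $f\in V^*$ and uses the Riesz representation theorem to identify the resulting functional with the dual realization of $S(s)f$, so your separate density extension from $H$ to $V^*$ is valid but not needed.
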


\begin{proof}
Lemma~\ref{lem:heat-variable} gives
$|\langle f,S(s)\phi\rangle|\le Cs^{-\eta}\|f\|_{V^*}\|\phi\|_H$
for every $\phi\in H$. By the Riesz representation theorem, this functional is identified with an element of \(H\) that coincides with the dual realization of \(S(s)f\) on \(V\).
\end{proof}

\section{Transport noise and variational solutions}\label{sec:noise}

\subsection{Fourier construction}

Let $\mathbb Z_0^d:=\mathbb Z^d\setminus\{0\}$ and $C_d:=d/(d-1)$. For each $k\in\mathbb Z_0^d$, choose an orthonormal basis $\{a_{k,i}\}_{i=1}^{d-1}$ of $k^\perp$ with $a_{-k,i}=a_{k,i}$, and set $\sigma_{k,i}=a_{k,i}e_k$. Then $\Div\sigma_{k,i}=0$ and $\sigma_{k,i}\cdot\nabla e_{\pm k}=0$. Let $\theta\in\ell^2(\mathbb Z_0^d;\mathbb R)$ satisfy
\begin{equation}\label{eq:theta-assumptions}
\theta_k=\theta_l\quad\text{if }|k|=|l|,
\qquad
\sum_{k\in\mathbb Z_0^d}\theta_k^2=1.
\end{equation}
Thus $\theta$ is invariant under signed coordinate permutations and $\theta_{-k}=\theta_k$.

Let $(\Omega,\mathcal F,\{\mathcal F_t\}_{t\ge0},\mathbb P)$ be a filtered probability space satisfying the usual conditions, with $\mathcal F$ countably generated modulo null sets. 
Let $\{W_t^{k,i}\}_{k\in\mathbb Z_0^d,1\le i\le d-1}$ be standard complex Brownian motions defined on $(\Omega,\mathcal F,\{\mathcal F_t\}_{t\ge0},\mathbb P)$,
satisfying
\begin{equation*}
W_t^{-k,i}=\overline{W_t^{k,i}},
\qquad
[W^{k,i},W^{l,j}]_t=2t\delta_{k,-l}\delta_{ij}.
\end{equation*}
For an intensity $\kappa>0$, set
\begin{equation*}
W(t,x):=\sqrt{C_d\kappa}
\sum_{k\in\mathbb Z_0^d}\sum_{i=1}^{d-1}
\theta_k\sigma_{k,i}(x)W_t^{k,i}.
\end{equation*}
The symmetry relations imply that $W$ is real valued.

Choose a partition $\mathbb Z_0^d=\mathbb Z_+^d\cup\mathbb Z_-^d$ with $\mathbb Z_-^d=-\mathbb Z_+^d$.  Define real vector fields
\begin{equation}\label{eq:xi-def}
\xi_{k,i}(x):=
\begin{cases}
2\sqrt{C_d\kappa}\,\theta_k\operatorname{Re}\sigma_{k,i}(x),&k\in\mathbb Z_+^d,\\
2\sqrt{C_d\kappa}\,\theta_k\operatorname{Im}\sigma_{k,i}(x),&k\in\mathbb Z_-^d,
\end{cases}
\end{equation}
and independent standard real Brownian motions
\begin{equation*}
B_t^{k,i}:=
\begin{cases}
\operatorname{Re}W_t^{k,i},&k\in\mathbb Z_+^d,\\
-\operatorname{Im}W_t^{k,i},&k\in\mathbb Z_-^d.
\end{cases}
\end{equation*}
Then $W(t,x)=\sum_{k,i}\xi_{k,i}(x)B_t^{k,i}$. Let $\mathbb W$ be the cylindrical Wiener process on
$\mathcal U:=\ell^2(\mathbb Z_0^d\times\{1,\ldots,d-1\})$
with coordinates $B^{k,i}$ and canonical basis $\{\mathbf e_{k,i}\}$. The transport term is $B(u)\dd\mathbb W_t$, where $B(u)\mathbf e_{k,i}=\xi_{k,i}\cdot\nabla u$. Each field is smooth and divergence-free, and
\begin{equation}\label{eq:xi-linfty-sum}
\sum_{k,i}\norm{\xi_{k,i}}_{L^\infty}^2
\le4C_d\kappa(d-1)\sum_k\theta_k^2=4d\kappa.
\end{equation}

\subsection{The Stratonovich-It\^{o} corrector}

\begin{lemma}\label{lem:isotropy}
The Fourier profile~\eqref{eq:theta-assumptions} satisfies
\begin{equation}\label{eq:isotropy}
\sum_{k\in\mathbb Z_0^d}\theta_k^2
\sum_{i=1}^{d-1}a_{k,i}\otimes a_{k,i}
=\frac{d-1}{d}I_d
=\frac1{C_d}I_d.
\end{equation}
\end{lemma}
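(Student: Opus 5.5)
The plan is to compute the matrix $M:=\sum_{k}\theta_k^2\sum_{i=1}^{d-1}a_{k,i}\otimes a_{k,i}$ by exploiting two facts: the inner sum depends only on $k$, not on the chosen basis of $k^\perp$, and $\theta_k$ depends only on $|k|$. First, since $\{a_{k,i}\}_{i=1}^{d-1}$ is an orthonormal basis of $k^\perp$, the inner sum is the orthogonal projection onto $k^\perp$:
\[
\sum_{i=1}^{d-1}a_{k,i}\otimes a_{k,i}=I_d-\frac{k\otimes k}{|k|^2}.
\]
Hence, using $\sum_k\theta_k^2=1$ from~\eqref{eq:theta-assumptions},
\[
M=I_d-\sum_{k\in\mathbb Z_0^d}\theta_k^2\frac{k\otimes k}{|k|^2}.
\]
The series converges absolutely, because each term has operator norm at most $\theta_k^2$ and $\theta\in\ell^2$. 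This allows us to regroup the terms freely.

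It then remains to show that $N:=\sum_k\theta_k^2\,k\otimes k/|k|^2=\frac1d I_d$. We group the sum over the spheres $\{|k|=r\}\cap\mathbb Z_0^d$. Each such set is finite and invariant under sign changes of single coordinates and under coordinate permutations, and $\theta_k^2$ is constant on it. For the off-diagonal entries $j\ne l$, the reflection $k_j\mapsto-k_j$ maps the sphere to itself and reverses the sign of $k_jk_l$, so $N_{jl}=0$. For the diagonal entries, a transposition of coordinates $j$ and $l$ shows that $N_{jj}=N_{ll}$. Taking the trace gives $\operatorname{tr}N=\sum_k\theta_k^2=1$, so each diagonal entry equals $1/d$. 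Therefore
\[
M=\Bigl(1-\frac1d\Bigr)I_d=\frac{d-1}{d}I_d=\frac{1}{C_d}I_d.
\]

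The argument is short and has no serious obstacle. The step needing the most care is the symmetry reduction. We need the invariance of each lattice sphere under signed coordinate permutations, which is immediate because these maps preserve $\mathbb Z^d$ and the Euclidean norm. We also need the absolute convergence noted above, so that grouping by spheres is legitimate. Note that the normalization $a_{-k,i}=a_{k,i}$ plays no role here, since the identity is basis-independent.
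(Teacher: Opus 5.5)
Your proof is correct and follows the same route as the paper: write the inner sum as the projection $I_d-(k/|k|)\otimes(k/|k|)$, then show $\sum_k\theta_k^2\,(k/|k|)\otimes(k/|k|)=I_d/d$ using invariance under signed coordinate permutations together with trace one. The only difference is that you spell out the symmetry step (reflections make the off-diagonal entries vanish, transpositions make the diagonal entries equal) and the absolute convergence that justifies reindexing, both of which the paper leaves implicit.
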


\begin{proof}
The matrix $M=\sum_k\theta_k^2(k/|k|)\otimes(k/|k|)$ is invariant under signed coordinate permutations and has trace one, so $M=I_d/d$. Since $\sum_i a_{k,i}\otimes a_{k,i}=I_d-(k/|k|)\otimes(k/|k|)$, we obtain~\eqref{eq:isotropy}.
\end{proof}

\begin{proposition}\label{prop:correction}
For every smooth periodic function $u$,
\begin{equation*}
\frac12\sum_{k,i}\xi_{k,i}\cdot\nabla
\bigl(\xi_{k,i}\cdot\nabla u\bigr)
=\kappa\Delta u.
\end{equation*}
\end{proposition}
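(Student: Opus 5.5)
The plan is to compute the second-order operator directly, using two facts. First, each $\xi_{k,i}$ is divergence-free. Second, the complex representation makes the sum collapse to a constant-coefficient operator. Since $\Div\xi_{k,i}=0$, I would write
\[
\xi_{k,i}\cdot\nabla(\xi_{k,i}\cdot\nabla u)=\Div\bigl(\xi_{k,i}(\xi_{k,i}\cdot\nabla u)\bigr)
=\Div\bigl((\xi_{k,i}\otimes\xi_{k,i})\nabla u\bigr).
\]
Summing over $(k,i)$, it therefore suffices to show that
\[
\sum_{k,i}\xi_{k,i}(x)\otimes\xi_{k,i}(x)=2\kappa I_d
\]
for every $x$. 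Exchanging the sum with $\Div$ is harmless. For a finitely supported $\theta$ the sum is finite. In general, \eqref{eq:xi-linfty-sum} shows that the series of tensors converges uniformly, and derivatives of $\xi_{k,i}$ enter only after $\Div$ acts on a constant matrix. If needed, one can argue first for finitely supported $\theta$ and pass to the limit in the distributional sense.

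To compute the covariance, I pair each $k\in\mathbb Z_+^d$ with $-k\in\mathbb Z_-^d$. Write $\sigma_{k,i}=a_{k,i}(\cos\phi+i\sin\phi)$ with $\phi=2\pi k\cdot x$. Then $\operatorname{Re}\sigma_{k,i}=a_{k,i}\cos\phi$. Also $\sigma_{-k,i}=a_{k,i}e^{-i\phi}$, since $a_{-k,i}=a_{k,i}$, so $\operatorname{Im}\sigma_{-k,i}=-a_{k,i}\sin\phi$. Using $\theta_{-k}=\theta_k$, the pair contributes
\[
4C_d\kappa\,\theta_k^2\,a_{k,i}\otimes a_{k,i}(\cos^2\phi+\sin^2\phi)
=4C_d\kappa\,\theta_k^2\,a_{k,i}\otimes a_{k,i}.
\]
This is independent of $x$. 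Summing over $k\in\mathbb Z_+^d$ and $i$ gives
\[
4C_d\kappa\sum_{k\in\mathbb Z_+^d}\theta_k^2\sum_i a_{k,i}\otimes a_{k,i}
=2C_d\kappa\sum_{k\in\mathbb Z_0^d}\theta_k^2\sum_i a_{k,i}\otimes a_{k,i},
\]
where the equality uses the symmetry $k\mapsto-k$ of the summand. Lemma~\ref{lem:isotropy} then gives $2C_d\kappa\cdot C_d^{-1}I_d=2\kappa I_d$. Hence the left side equals $\tfrac12\Div(2\kappa\nabla u)=\kappa\Delta u$.

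I expect the only delicate point to be the bookkeeping of the partition $\mathbb Z_\pm^d$ and the conventions $a_{-k,i}=a_{k,i}$, $\theta_{-k}=\theta_k$, which is what makes the real and imaginary parts combine into $\cos^2+\sin^2$. The convergence issue for infinitely supported $\theta$ is routine, given \eqref{eq:xi-linfty-sum}.
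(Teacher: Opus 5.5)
Your proof is correct. The covariance computation (the $\pm k$ cosine--sine pairing using $a_{-k,i}=a_{k,i}$ and $\theta_{-k}=\theta_k$, then Lemma~\ref{lem:isotropy}) is exactly how the paper obtains $\sum_{k,i}\xi_{k,i}\otimes\xi_{k,i}=2\kappa I_d$.

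You differ in how you dispose of the first-order term. You use $\Div\xi_{k,i}=0$ to put each summand in divergence form $\Div\bigl((\xi_{k,i}\otimes\xi_{k,i})\nabla u\bigr)$. The paper instead uses $a_{k,i}\cdot k=0$ in the form $\xi_{k,i}\cdot\nabla\xi_{k,i}=0$, so each summand is the pure second-order term $(\xi_{k,i}\otimes\xi_{k,i}):D^2u$, and then contracts the covariance with $D^2u$.

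The paper's form bounds each summand pointwise by $\norm{\xi_{k,i}}_{L^\infty}^2|D^2u|$. By \eqref{eq:xi-linfty-sum}, the operator series therefore converges absolutely and uniformly, in any order, with no pairing needed. In your form, a single summand contains the piece $\partial_a(\xi_{k,i}^a\xi_{k,i}^b)\,\partial_bu=(\xi_{k,i}\cdot\nabla\xi_{k,i})\cdot\nabla u$. Its crude size is of order $\kappa\theta_k^2|k|$, which is not summable for a general $\theta\in\ell^2$. So your interchange of sum and $\Div$ is justified only along $\pm k$-symmetric partial sums, where the paired tensor is constant, or in the distributional sense; this is what your fallback amounts to. Observing that this first-order piece vanishes identically closes the point and turns your computation into the paper's.

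In exchange, your divergence form is the one the paper actually relies on in the variational setting. In Corollary~\ref{cor:variational-correction} and Lemma~\ref{lem:transport-divergence}, the correction appears as $\Div\mathfrak K(u)$ with $\mathfrak K(u)=\tfrac12\sum_j\xi_j(\xi_j\cdot\nabla u)$. That form uses only $\Div\xi_j=0$ and the covariance, and never differentiates $\xi_j$.
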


\begin{proof}
Since $a_{k,i}\cdot k=0$, we have $\xi_{k,i}\cdot\nabla\xi_{k,i}=0$. Pairing sine and cosine fields and using~\eqref{eq:isotropy}, we get
\begin{equation}\label{eq:covariance-xi}
\sum_{k,i}\xi_{k,i}(x)\otimes\xi_{k,i}(x)
=2C_d\kappa\sum_k\theta_k^2\sum_i a_{k,i}\otimes a_{k,i}
=2\kappa I_d.
\end{equation}
The sum converges absolutely and uniformly by~\eqref{eq:xi-linfty-sum}. Contracting with $D^2u/2$ proves the assertion.
\end{proof}

\begin{corollary}\label{cor:finite-annular}
For $N\in\mathbb N$, set
\[
\mathcal K_N:=\{k\in\mathbb Z_0^d:N\le|k|\le2N\},\qquad
\theta_k^{(N)}:=|\mathcal K_N|^{-1/2}\mathbf1_{\mathcal K_N}(k).
\]
These profiles satisfy~\eqref{eq:theta-assumptions}, have finite support, and retain the covariance $2\kappa I_d$ and correction $\kappa\Delta$ of Proposition~\ref{prop:correction}. Moreover,
\begin{equation*}
\|\theta^{(N)}\|_{\ell^\infty}=|\mathcal K_N|^{-1/2}\longrightarrow0.
\end{equation*}
\end{corollary}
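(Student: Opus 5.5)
The plan is to check the four claims of Corollary~\ref{cor:finite-annular} in turn: the constraints~\eqref{eq:theta-assumptions}, finite support, the covariance and correction, and the decay of the sup norm. The covariance and correction are obtained by reusing Lemma~\ref{lem:isotropy} and Proposition~\ref{prop:correction}.

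\emph{Profile constraints and finite support.} The set $\mathcal K_N$ is defined only through $|k|$. Hence, if $|k|=|l|$, then $k$ and $l$ lie in $\mathcal K_N$ together or not at all, and $\theta^{(N)}_k=\theta^{(N)}_l$. The set $\mathcal K_N$ is contained in the finite box $\{|k_j|\le 2N\}$, and it is nonempty because $Ne_1\in\mathcal K_N$. Therefore $0<|\mathcal K_N|<\infty$, and
\[
\sum_k(\theta^{(N)}_k)^2=|\mathcal K_N|\cdot|\mathcal K_N|^{-1}=1 .
\]
So~\eqref{eq:theta-assumptions} holds, and the support is finite.

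\emph{Covariance and correction.} Lemma~\ref{lem:isotropy} and Proposition~\ref{prop:correction} use only~\eqref{eq:theta-assumptions}. Their ingredients are invariance under signed coordinate permutations, which follows from radial dependence, and the normalization $\sum\theta_k^2=1$. Applying them verbatim to $\theta^{(N)}$ gives
\[
\sum_{k,i}\xi_{k,i}\otimes\xi_{k,i}=2\kappa I_d
\quad\text{and correction}\quad \kappa\Delta .
\]
With finite support the series are finite sums, so no convergence issue arises.

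\emph{Sup norm.} By construction $\|\theta^{(N)}\|_{\ell^\infty}=|\mathcal K_N|^{-1/2}$. It remains to show $|\mathcal K_N|\to\infty$. The lattice points $k=(j,0,\dots,0)$ with $N\le j\le 2N$ already lie in $\mathcal K_N$, so $|\mathcal K_N|\ge N+1$. A volume count gives the sharper bound $|\mathcal K_N|\gtrsim N^d$, but this is not needed. The only point requiring care, which is minor, is confirming that the isotropy argument needs nothing beyond~\eqref{eq:theta-assumptions}.
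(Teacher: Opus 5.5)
Your proposal is correct and follows the same route as the paper. The paper notes that the profiles are radial and normalized, so Lemma~\ref{lem:isotropy} and Proposition~\ref{prop:correction} apply, and that $|\mathcal K_N|\to\infty$; you additionally spell out what the paper leaves implicit, namely that $\mathcal K_N$ is finite and nonempty and that $|\mathcal K_N|\ge N+1$ via the points $(j,0,\dots,0)$, $N\le j\le 2N$.
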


\begin{proof}
The profiles are radial and normalized, so Proposition~\ref{prop:correction} applies. Since $|\mathcal K_N|\to\infty$, we also have $\|\theta^{(N)}\|_{\ell^\infty}\to0$.
\end{proof}

\begin{corollary}\label{cor:variational-correction}
For $u,v\in V$, define the transport correction by
\begin{equation}\label{eq:Cxi-variational}
\pair{\mathcal C_\xi(u)}{v}_{V^*,V}
:=-\frac12\sum_{k,i}\int_{\mathbb T^d}
(\xi_{k,i}\cdot\nabla u)(\xi_{k,i}\cdot\nabla v)\dd x.
\end{equation}
Then the series is absolutely convergent and
\begin{equation}\label{eq:Cxi-kappaDelta}
\pair{\mathcal C_\xi(u)}{v}_{V^*,V}
=-\kappa\int_{\mathbb T^d}\nabla u\cdot\nabla v\dd x
=\pair{\kappa\Delta u}{v}_{V^*,V}.
\end{equation}
\end{corollary}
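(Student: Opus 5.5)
The plan is to prove absolute convergence first and then to identify the sum through the covariance identity~\eqref{eq:covariance-xi}. Fix $u,v\in V$. For every $(k,i)$ we have the pointwise bound
\[
|(\xi_{k,i}\cdot\nabla u)(\xi_{k,i}\cdot\nabla v)|\le\|\xi_{k,i}\|_{L^\infty}^2|\nabla u||\nabla v|.
\]
Since $p^->2$ and $|\mathbb T^d|=1$, the embedding~\eqref{eq:embedding-L2} applied to each component gives $\nabla u,\nabla v\in L^2$, with $\|\nabla u\|_{L^2}\le\hat C_e\|u\|_V$. The Cauchy--Schwarz inequality then yields
\[
\sum_{k,i}\int_{\mathbb T^d}|(\xi_{k,i}\cdot\nabla u)(\xi_{k,i}\cdot\nabla v)|\dd x\le\sum_{k,i}\|\xi_{k,i}\|_{L^\infty}^2\,\|\nabla u\|_{L^2}\|\nabla v\|_{L^2}\le4d\kappa\,\hat C_e^2\|u\|_V\|v\|_V,
\]
where the last step uses~\eqref{eq:xi-linfty-sum}. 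This proves absolute convergence. The same bound shows that $\mathcal C_\xi(u)$ is a bounded linear functional on $V$, so $\mathcal C_\xi(u)\in V^*$ is well defined.

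For the identification, the absolute convergence lets us interchange the sum and the integral, by Fubini--Tonelli for the counting measure times Lebesgue measure. Writing $(\xi\cdot\nabla u)(\xi\cdot\nabla v)=(\xi\otimes\xi):(\nabla u\otimes\nabla v)$, the sum becomes
\[
-\frac12\int_{\mathbb T^d}\Bigl(\sum_{k,i}\xi_{k,i}(x)\otimes\xi_{k,i}(x)\Bigr):\nabla u\otimes\nabla v\dd x.
\]
By~\eqref{eq:covariance-xi}, the matrix in parentheses equals $2\kappa I_d$ for every $x$, and the series converges uniformly. The expression therefore equals $-\kappa\int\nabla u\cdot\nabla v\dd x$. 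Finally, $\langle\kappa\Delta u,v\rangle_{V^*,V}=-\kappa\int\nabla u\cdot\nabla v$ is the standard weak definition of the Laplacian on the torus, with no boundary terms by periodicity. This is legitimate because $\nabla u,\nabla v\in L^2$, and it agrees with the smooth case by density. This gives~\eqref{eq:Cxi-kappaDelta}.

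I expect the only delicate point to be the covariance identity~\eqref{eq:covariance-xi}. It requires pairing the real and imaginary parts. For $k\in\mathbb Z_+^d$, the field $\operatorname{Re}\sigma_{k,i}=a_{k,i}\cos(2\pi k\cdot x)$ is paired with the field attached to $-k\in\mathbb Z_-^d$. That field is $\operatorname{Im}\sigma_{-k,i}=-a_{k,i}\sin(2\pi k\cdot x)$, using $a_{-k,i}=a_{k,i}$. The two contributions sum to $4C_d\kappa\theta_k^2a_{k,i}\otimes a_{k,i}$. After halving against the full sum over $\pm k$, we obtain $2C_d\kappa\sum_k\theta_k^2\sum_ia_{k,i}\otimes a_{k,i}=2\kappa I_d$ by Lemma~\ref{lem:isotropy}. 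This identity was already established in the proof of Proposition~\ref{prop:correction}, so here it can simply be invoked.
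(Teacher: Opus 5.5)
Your proof is correct and follows the paper's argument: absolute convergence from $\sum_{k,i}\|\xi_{k,i}\|_{L^\infty}^2\le 4d\kappa$ together with $\nabla u,\nabla v\in L^2$, then interchanging sum and integral and applying the covariance identity \eqref{eq:covariance-xi}. Your check of the cosine--sine pairing behind \eqref{eq:covariance-xi} is accurate, but it is already contained in the proof of Proposition~\ref{prop:correction}.
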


\begin{proof}
Since $\nabla u,\nabla v\in L^2$,~\eqref{eq:xi-linfty-sum} gives
\[
\sum_{k,i}\int_{\mathbb T^d}|(\xi_{k,i}\cdot\nabla u)(\xi_{k,i}\cdot\nabla v)|\dd x
\le4d\kappa\|\nabla u\|_{L^2}\|\nabla v\|_{L^2}.
\]
We may therefore interchange the sum and integral and apply~\eqref{eq:covariance-xi} to obtain~\eqref{eq:Cxi-kappaDelta}.
\end{proof}

Motivated by Proposition~\ref{prop:correction} and Corollary~\ref{cor:variational-correction}, we rigorously define the equation by the It\^o variational problem
\begin{equation}\label{eq:ito-main}
\dd u
=\Dpx u \dd t+\kappa\Delta u\dd t
+\sum_{k,i}\xi_{k,i}\cdot\nabla u \dd B_t^{k,i}.
\end{equation}
\subsection{Variational well-posedness and the energy identity}\label{sec:wellposed-main}

In this subsection, assume only that $p$ is periodic and log-H\"older continuous with $2<p^-\le p^+<\infty$. The restriction $p^+<p_{\mathrm c}$ enters in Subsection~\ref{sec:mild}. We first obtain a weak limit using the compatible correction~\eqref{eq:CN-def}, then apply Proposition~\ref{prop:variable-Ito} to its unidentified flux, and finally use Minty's argument.
For \(T>0\), set
\[
Q_T:=\Omega\times(0,T)\times\mathbb T^d,
\qquad
\widetilde p(\omega,t,x):=p(x).
\]
When convenient, we relabel the double index \((k,i)\) by a single index \(j\) and write
\[
B(u)\mathbf e_j:=\xi_j\cdot\nabla u,
\qquad
\pair{\mathcal C_\xi(u)}{v}_{V^*,V}
:=-\frac12\sum_j\int_{\mathbb T^d}(\xi_j\cdot\nabla u)(\xi_j\cdot\nabla v)\dd x.
\]
For the Fourier fields of Section~\ref{sec:noise},
Corollary~\ref{cor:variational-correction}
yields \(\mathcal C_\xi(u)=\kappa\Delta u\).

\begin{definition}\label{def:variational}
An adapted process \(u\) is called a variational solution
of~\eqref{eq:ito-main} on \([0,T]\) if \(u\) has weakly continuous \(H\)-valued paths,
\[
\mathbb E\sup_{0\le t\le T}\norm{u(t)}_H^2<\infty,
\qquad
\nabla u\in L^{\widetilde p}(Q_T;\mathbb R^d),
\]
\(u\) and \(\nabla u\) admit predictable representatives, and,
on one set of probability one, for every \(t\in[0,T]\) and \(v\in V\),
\begin{align}
\pair{u(t)}{v}
={}&\pair{u_0}{v}
-\int_0^t\int_{\mathbb T^d}
\abs{\nabla u}^{p(x)-2}\nabla u\cdot\nabla v\dd x\dd s\notag\\
&-\kappa\int_0^t\int_{\mathbb T^d}\nabla u\cdot\nabla v\dd x\dd s
+\sum_j\int_0^t\pair{\xi_j\cdot\nabla u}{v}\dd B_s^j.
\label{eq:variational-main}
\end{align}
\end{definition}

 If $z=u-v$, then
\begin{equation*}
2\pair{\kappa\Delta z}{z}_{V^*,V}
+\norm{B(u)-B(v)}_{\mathcal L_2(\mathcal U,H)}^2=0,
\end{equation*}
because $\sum_j\xi_j\otimes\xi_j=2\kappa I_d$.  Together with the monotonicity of the $p(x)$-Laplace operator, this yields
\begin{equation}\label{eq:main-weak-monotonicity}
2\pair{\Dpx u-\Dpx v+\kappa\Delta z}{z}
+\norm{B(u)-B(v)}_{\mathcal L_2(\mathcal U,H)}^2\le0.
\end{equation}
In particular, the transport correction contributes no net $L^2$ energy.

\subsubsection{Galerkin approximation and energy identity}

Let \(H_N\) denote the real zero-mean trigonometric space generated by the frequencies \(0<|k|\le N\), and let $P_N:H\to H_N$ be the $L^2$-orthogonal projection.  It is self-adjoint, commutes with derivatives in the $L^2$ sense, and, for every smooth mean-zero $\varphi$,
\begin{equation}\label{eq:PN-smooth-convergence}
P_N\varphi\longrightarrow\varphi
\quad\text{in }W^{r,q}(\mathbb T^d)
\quad\text{for every }r\ge0\text{ and }1<q<\infty.
\end{equation}
Define $A_N(u)\in H_N$ weakly by
\begin{equation}\label{eq:AN-def}
\pair{A_N(u)}{v}_H
=-\int_{\mathbb T^d}|\nabla u|^{p(x)-2}\nabla u\cdot\nabla v\dd x,
\qquad v\in H_N,
\end{equation}
and employ the compatible correction
\begin{equation}\label{eq:CN-def}
\mathcal C_N(u):=\frac12\sum_jP_NL_jP_NL_ju,
\qquad L_ju:=\xi_j\cdot\nabla u.
\end{equation}
This sum is well defined on $H_N$.  Indeed, if the field $\xi_j$ corresponds to a Fourier frequency $k$ and $u\in H_N$, then $P_NL_ju=0$ whenever $|k|>2N$.
Hence only finitely many lattice frequencies contribute for each
fixed \(N\), even when the original profile has infinite support. The finite-dimensional equation
\begin{equation}\label{eq:Galerkin}
\dd u_N=A_N(u_N)\dd t+\mathcal C_N(u_N)\dd t
+\sum_jP_NL_ju_N\dd B_t^j,
\qquad u_N(0)=P_Nu_0,
\end{equation}
has locally Lipschitz coefficients. Indeed, $H_N$ is finite dimensional and $z\mapsto|z|^{p(x)-2}z$ is locally Lipschitz uniformly for $p(x)\in[p^-,p^+]$. Hence it has a
unique local solution. Applying It\^o's formula up to its exit time, the skew-adjointness of \(L_j\) and the self-adjointness of \(P_N\) yield
\begin{equation}\label{eq:main-Galerkin-energy}
\norm{u_N(t)}_H^2
+2\int_0^t\rho_{p(\cdot)}(\nabla u_N(s))\dd s
=\norm{P_Nu_0}_H^2.
\end{equation}
In fact,
\[
2\pair{\mathcal C_N(u_N)}{u_N}
=-\sum_j\norm{P_NL_ju_N}_{L^2}^2,
\qquad
\pair{u_N}{P_NL_ju_N}=0,
\]
so the correction, quadratic variation and scalar martingale cancel exactly.  Consequently, for every $T>0$,
\begin{equation}\label{eq:Galerkin-uniform}
\begin{aligned}
&\sup_N\mathbb E\sup_{0\le t\le T}\norm{u_N(t)}_H^2
\le\mathbb E\norm{u_0}_H^2,\\
2&\sup_N\mathbb E\int_0^T\rho_{p(\cdot)}(\nabla u_N(s))\dd s
\le\mathbb E\norm{u_0}_H^2,
\end{aligned}
\end{equation}
so no finite-dimensional blow-up occurs and~\eqref{eq:Galerkin} has a global solution.

\subsubsection{Compactness and the unidentified limit equation}\label{subsec:galerkin-limit}

\begin{lemma}\label{lem:predictable-closed}
Let $1<r^-\le r(x)\le r^+<\infty$ and $\widetilde r(\omega,t,x)=r(x)$. The subspace of $L^{\widetilde r}(Q_T;\mathbb R^N)$ admitting predictable representatives is norm closed and weakly closed. The same holds in $L^2(\Omega\times(0,T);X)$ for a separable Hilbert space $X$. Here predictability is understood with respect to the completed product of the predictable $\sigma$-field and $\mathcal B(\mathbb T^d)$.
\end{lemma}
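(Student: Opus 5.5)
The plan is to prove the two closedness claims separately and to get weak closedness from norm closedness via Mazur's lemma. The subspace is linear and a norm-closed convex subset of a Banach space is weakly closed, so only norm closedness needs work.

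Write $\mathcal P$ for the completed product $\sigma$-field on $\Omega\times(0,T)\times\mathbb T^d$, i.e. the predictable $\sigma$-field times $\mathcal B(\mathbb T^d)$, completed with respect to $\mathbb P\otimes\mathrm dt\otimes\mathrm dx$.

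\textbf{Reduction.} The claim becomes: if $f_n\to f$ in $L^{\widetilde r}(Q_T;\mathbb R^N)$ and each $f_n$ has a $\mathcal P$-measurable representative, then so does $f$. Since $r^+<\infty$, norm convergence is equivalent to modular convergence. Hence $\int_{Q_T}|f_n-f|^{\widetilde r}\to0$.

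\textbf{Norm closedness.}
\begin{itemize}
\item Pass to a subsequence and use Chebyshev applied to the modular: $f_n\to f$ in measure, and a further subsequence converges $\mathbb P\otimes\mathrm dt\otimes\mathrm dx$-almost everywhere.
\item Alternatively, use the embedding $L^{\widetilde r}(Q_T)\hookrightarrow L^{r^-}(Q_T)$, valid since $Q_T$ has finite measure and $r\ge r^-$. It gives convergence in $L^{r^-}$ and hence an a.e.\ convergent subsequence.
\item Let $g_n$ be $\mathcal P$-measurable representatives of $f_n$, and define $g:=\limsup_n g_n$ componentwise on the set where the limit exists, and $g:=0$ elsewhere.
\item Then $g$ is $\mathcal P$-measurable, and $g=f$ almost everywhere. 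So $f$ admits a predictable representative.
\end{itemize}
The completion of the $\sigma$-field is what makes it legitimate to modify $g$ on null sets.

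\textbf{Weak closedness.} If $f_n\rightharpoonup f$ weakly, Mazur's lemma gives convex combinations of the tail $\{f_k\}_{k\ge n}$ converging in norm to $f$. These convex combinations still admit predictable representatives, by linearity of measurability. Norm closedness then concludes.

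\textbf{Hilbert-space case.} The argument in $L^2(\Omega\times(0,T);X)$ is identical. Here predictability means measurability for the predictable $\sigma$-field (completed) with values in $X$, which, $X$ being separable, is equivalent to weak measurability by Pettis's theorem. Norm convergence in $L^2$ yields an a.e.\ convergent subsequence in $X$. A pointwise limit of strongly measurable functions is strongly measurable, defined as $0$ off the convergence set, which is measurable since the set where a sequence is Cauchy in the complete space $X$ is measurable. Mazur's lemma again gives weak closedness.

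\textbf{Main obstacle.} This is mild: the modular-to-almost-everywhere step in variable-exponent spaces. The bounded exponent $r^+<\infty$ and the finite measure of $Q_T$ handle it through either the modular-norm comparison \eqref{eq:modular-norm-1} (which holds verbatim for $\widetilde r$ on $Q_T$ once normalized) or the embedding into $L^{r^-}$.
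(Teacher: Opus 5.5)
Your proposal is correct and follows essentially the paper's argument: extract an almost-everywhere convergent subsequence, whose limit again has a predictable representative for the completed $\sigma$-field, which gives norm closedness. Weak closedness then follows because the subspace is linear, and the $X$-valued case is handled identically. The extra details you supply (modular/measure convergence, the Pettis measurability remark, Mazur's lemma) are sound elaborations of steps the paper leaves implicit.
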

\begin{proof}
Norm convergence has an almost-everywhere convergent subsequence, whose limit is predictable. The subspace is therefore norm closed and, being linear, weakly closed. The same argument applies to $X$-valued functions.
\end{proof}

\begin{lemma}\label{lem:weak-H-representative}
Let $V\hookrightarrow H\hookrightarrow V^*$ be the dense continuous Gelfand triple used above.  If
\[
v\in C([0,T];V^*)\cap L^\infty(0,T;H),
\]
then $v$ has a unique representative in $C_w([0,T];H)$ and
\begin{equation}\label{eq:weak-H-bound}
\sup_{t\in[0,T]}\norm{v(t)}_H
\le\operatorname*{ess\,sup}_{t\in(0,T)}\norm{v(t)}_H.
\end{equation}
\end{lemma}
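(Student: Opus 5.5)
The plan is to build the representative from the duality pairing with $V$ and then pass to $H$ by density. Put $M:=\operatorname*{ess\,sup}_{t\in(0,T)}\norm{v(t)}_H$ and let $E\subset[0,T]$ be a full-measure set on which $\norm{v(t)}_H\le M$. For $t\in E$ and $w\in V$ we have $\pair{v(t)}{w}_{V^*,V}=\pair{v(t)}{w}_H$, hence
\[
|\pair{v(t)}{w}_{V^*,V}|\le M\norm{w}_H .
\]
Because $E$ is dense in $[0,T]$ and $v\in C([0,T];V^*)$, the left-hand side depends continuously on $t$. Letting $t_n\to t$ with $t_n\in E$ shows that the same bound holds for every $t\in[0,T]$.

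Since $V$ is dense in $H$, the functional $w\mapsto\pair{v(t)}{w}_{V^*,V}$ therefore extends uniquely to a bounded functional on $H$ of norm at most $M$. The Riesz theorem represents it by some $\tilde v(t)\in H$ with $\norm{\tilde v(t)}_H\le M$. The map $H\hookrightarrow V^*$ is injective because $V$ is dense in $H$, so $\tilde v(t)$ coincides with $v(t)$ as an element of $V^*$. In particular $\tilde v(t)=v(t)$ for $t\in E$, and $\tilde v$ is a representative of $v$. This already gives~\eqref{eq:weak-H-bound}.

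Weak continuity is the one point needing an argument. Fix $h\in H$ and $\varepsilon>0$, and choose $w\in V$ with $\norm{h-w}_H\le\varepsilon$. Then
\[
|\pair{\tilde v(t)-\tilde v(s)}{h}_H|\le2M\varepsilon+|\pair{v(t)-v(s)}{w}_{V^*,V}|.
\]
The last term tends to zero as $s\to t$ by continuity in $V^*$, so $t\mapsto\pair{\tilde v(t)}{h}_H$ is continuous and $\tilde v\in C_w([0,T];H)$.

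For uniqueness, suppose two weakly continuous $H$-valued representatives agree almost everywhere. Their pairings with any fixed $h\in H$ are continuous functions of $t$ that agree almost everywhere, hence everywhere. Taking all $h\in H$, the two representatives agree at every $t$.
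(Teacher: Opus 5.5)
Your proof is correct and follows essentially the same route as the paper: approximate each $t$ by times from the full-measure set where $\norm{v(t)}_H\le M$, pass from $V^*$ to $H$ pointwise, and then use density of $V$ in $H$ to get weak continuity. The difference is in the pointwise step. You bound $w\mapsto\pair{v(t)}{w}_{V^*,V}$ by $M\norm{w}_H$ and invoke the Riesz theorem, whereas the paper extracts a weak $H$-limit of $v(t_n)$, identifies it with the $V^*$-limit $v(t)$, and obtains the bound from weak lower semicontinuity of the norm; your explicit uniqueness argument also fills in a point the paper leaves implicit.
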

\begin{proof}
Choose \(t_n\to t\) from the full-measure set on which the \(H\)
bound holds. Any weak \(H\) limit of \(v(t_n)\) coincides with the \(V^*\) limit \(v(t)\), and weak lower semicontinuity
yields~\eqref{eq:weak-H-bound}. Approximating \(h\in H\) by elements of \(V\) gives continuity of \(t\mapsto\pair{v(t)}{h}_H\);
see~\cite{PrevotRockner2007,LiuRockner2015}.
\end{proof}

By~\eqref{eq:Galerkin-uniform}, the gradients are bounded in $L^{\widetilde p}(Q_T)$. Since $p^->2$ and $Q_T$ has measure $T$,
\begin{equation}\label{eq:grad-L2-product}
\sup_N\mathbb{E}\int_0^T\norm{\nabla u_N(t)}_{L^2}^2\dd t
\le T+\sup_N\mathbb{E}\int_0^T\rho_{p(\cdot)}(\nabla u_N(t))\dd t<\infty.
\end{equation}
For $F_N:=J_{p(\cdot)}(u_N)$,~\eqref{eq:flux-modular} and~\eqref{eq:main-Galerkin-energy} give
\begin{align*}
\int_{Q_T}|F_N(\omega,t,x)|^{p'(x)}\dd\mathbb P\dd t\dd x
&=\mathbb{E}\int_0^T\rho_{p(\cdot)}(\nabla u_N(t))\dd t\\
&\le\frac12\mathbb{E}\norm{P_Nu_0}_H^2
\le\frac12\mathbb{E}\norm{u_0}_H^2.
\end{align*}
By reflexivity and weak-$*$ compactness, we may choose a subsequence such that
\begin{align}
u_N&\rightharpoonup u
\text{ weakly-\(*\) in }L^\infty\bigl(0,T;L^2(\Omega;H)\bigr),
\notag\\
u_N&\rightharpoonup u
\text{ weakly in }L^2\bigl(\Omega\times(0,T);H\bigr),
\label{eq:weakL2-uN}\\
\nabla u_N&\rightharpoonup Y
\text{ weakly in }L^{\widetilde p}(Q_T;\mathbb R^d),
\label{eq:weak-grad-uN}\\
F_N&\rightharpoonup F
\text{ weakly in }L^{\widetilde p'}(Q_T;\mathbb R^d).
\label{eq:weak-FN}
\end{align}
For smooth vector fields $\phi_r$ and bounded predictable $Z_r$, set $\psi=\sum_{r=1}^M Z_r\phi_r$. By integration by parts and~\eqref{eq:weakL2-uN}--\eqref{eq:weak-grad-uN}, we have
\[
\int_{Q_T}Y\cdot\psi=-\int_{Q_T}u\,\Div_x\psi.
\]
By density, $Y=\nabla u$ distributionally, and
\begin{equation}\label{eq:main-weak-grad}
\nabla u_N\rightharpoonup\nabla u
\quad\text{in }L^{\widetilde p}(Q_T;\mathbb R^d).
\end{equation}
Lemma~\ref{lem:predictable-closed} gives predictable representatives of $u$, $\nabla u$ and $F$.

Define
\[
B_N(t)\mathbf e_j:=P_NL_ju_N(t),
\qquad
B(u(t))\mathbf e_j:=L_ju(t),
\qquad L_jv:=\xi_j\cdot\nabla v.
\]

By~\eqref{eq:grad-L2-product} and~\eqref{eq:xi-linfty-sum}, $B_N$ is bounded in $L^2_{\mathcal P}(\Omega\times(0,T);\mathcal L_2(\mathcal U,H))$. The transport map is linear and satisfies $\|B(v)\|_{\mathcal L_2}^2\le4d\kappa\|\nabla v\|_{L^2}^2$. Thus the weak $L^2$ convergence of the gradients supplied by~\eqref{eq:main-weak-grad} gives weak convergence of $B(u_N)$ to $B(u)$. For every square-integrable Hilbert--Schmidt test $\Phi$, strong convergence of $P_N$ and dominated convergence give $P_N\Phi\to\Phi$ in the same $L^2$ space. Since $B_N=P_NB(u_N)$, we obtain
\begin{equation*}
B_N\rightharpoonup B(u)
\quad\text{in }L^2_{\mathcal P}(\Omega\times(0,T);\mathcal L_2(\mathcal U,H)).
\end{equation*}
The bounded linear It\^o map yields
\begin{equation}\label{eq:stochastic-integral-weak}
\int_0^tB_N(r)\dd\mathbb W_r
\rightharpoonup
\int_0^tB(u(r))\dd\mathbb W_r
\quad\text{in }L^2(\Omega;H)
\end{equation}
for every $t\in[0,T]$. The space-time It\^o map is also bounded by the isometry and Fubini's theorem. With
\[
\mathbb M_N(t):=\int_0^tB_N(r)\dd\mathbb W_r,
\qquad
\mathbb M(t):=\int_0^tB(u(r))\dd\mathbb W_r,
\]
we therefore obtain
\begin{equation}\label{eq:stochastic-integral-weak-spacetime}
\mathbb M_N\rightharpoonup\mathbb M
\quad\text{in }L^2(\Omega\times(0,T);H).
\end{equation}

For $v\in V$, by the $L^2$ contractivity of $P_N$ we have $\|\nabla P_Nv\|_{L^2}\le\|\nabla v\|_{L^2}$. Hence
\begin{align*}
\abs{\pair{\mathcal C_N(u_N)}{v}}
&\le \frac12
\left(\sum_j\norm{P_NL_ju_N}_{L^2}^2\right)^{1/2}
\left(\sum_j\norm{L_jP_Nv}_{L^2}^2\right)^{1/2}\\
&\le C \kappa\norm{\nabla u_N}_{L^2}\norm{v}_V.
\end{align*}
Consequently, by~\eqref{eq:grad-L2-product} we get
\begin{equation}\label{eq:CN-L2-Vstar}
\sup_N\mathbb{E}\int_0^T\norm{\mathcal C_N(u_N(t))}_{V^*}^2\dd t<\infty.
\end{equation}
The same estimate holds for $\mathcal C_\xi(u)$.  Now let $\varphi$ be smooth and of mean zero. From~\eqref{eq:CN-def}, we have
\begin{equation*}
\pair{\mathcal C_N(u_N)}{\varphi}
=-\frac12\sum_{j=1}^\infty
\pair{P_NL_ju_N}{L_jP_N\varphi}_H.
\end{equation*}
For fixed $J$, the sum over $j\le J$ converges by~\eqref{eq:weakL2-uN} and~\eqref{eq:PN-smooth-convergence}, while the remaining tail satisfies
\begin{align*}
&~~\sum_{j>J}\abs{\pair{P_NL_ju_N}{L_jP_N\varphi}}\\
&\le
\left(\sum_{j>J}\norm{\xi_j\cdot\nabla u_N}_{L^2}^2\right)^{1/2}
\left(\sum_{j>J}\norm{\xi_j\cdot\nabla P_N\varphi}_{L^2}^2\right)^{1/2}\\
&\le
\left(\sum_{j>J}\norm{\xi_j}_{L^\infty}^2\right)
\norm{\nabla u_N}_{L^2}\norm{\nabla P_N\varphi}_{L^2}.
\end{align*}
The last expression tends to zero in $L^1(\Omega\times(0,T))$ uniformly in $N$ as $J\to\infty$. Consequently, for every smooth mean-zero $\varphi$ and every bounded measurable scalar process $\zeta$, it holds
\begin{equation}\label{eq:correction-limit}
\mathbb{E}\int_0^T\zeta(t)\pair{\mathcal C_N(u_N(t))}{\varphi}\dd t
\longrightarrow
\mathbb{E}\int_0^T\zeta(t)\pair{\mathcal C_\xi(u(t))}{\varphi}\dd t.
\end{equation}
Smooth spatial tests with bounded measurable coefficients are dense in $L^2(\Omega\times(0,T);V)$. Hence~\eqref{eq:CN-L2-Vstar} and~\eqref{eq:correction-limit} imply
\begin{equation}\label{eq:correction-weak-Vstar}
\mathcal C_N(u_N)\rightharpoonup \mathcal C_\xi(u)
\quad\text{in }L^2(\Omega\times(0,T);V^*).
\end{equation}
For smooth mean-zero $\varphi$ and bounded measurable $\zeta$,~\eqref{eq:AN-def} similarly gives
\begin{align}
&~~\mathbb E\int_0^T\zeta(t)
\pair{A_N(u_N(t))}{P_N\varphi}_H\dd t\notag\\
&=-\mathbb E\int_0^T\zeta(t)
\int_{\mathbb T^d}F_N(t,x)\cdot\nabla P_N\varphi(x)\dd x\dd t\notag\\
&\longrightarrow
-\mathbb E\int_0^T\zeta(t)
\int_{\mathbb T^d}F(t,x)\cdot\nabla\varphi(x)\dd x\dd t.
\label{eq:nonlinear-drift-weak-integrated}
\end{align}
Here $\nabla P_N\varphi\to\nabla\varphi$ strongly in $L^{p(\cdot)}$ and $F_N\rightharpoonup F$ weakly in $L^{\widetilde p'}$.

Let
\begin{equation*}
\widetilde u(t)
:=u_0
+\int_0^t\Div F(r)\dd r
+\int_0^t\mathcal C_\xi(u(r))\dd r
+\mathbb M(t),
\qquad 0\le t\le T.
\end{equation*}
Variable-exponent H\"older's inequality and~\eqref{eq:CN-L2-Vstar} make the drift integrals absolutely continuous in $V^*$. Since $\mathbb M$ is $H$-continuous, we have
\[
\widetilde u\in C([0,T];V^*)
\qquad\text{almost surely}.
\]

For smooth mean-zero $\varphi$, $\eta\in C_c^\infty(0,T)$ and bounded random $Z$, test the Galerkin equation by $P_N\varphi$ and integrate against $Z\eta(t)$. By~\eqref{eq:weakL2-uN},~\eqref{eq:weak-FN},~\eqref{eq:correction-weak-Vstar} and~\eqref{eq:stochastic-integral-weak-spacetime}, we obtain
\[
\mathbb E\int_0^T Z\eta(t)\langle u(t)-\widetilde u(t),\varphi\rangle\dd t=0.
\]
Arbitrariness of $Z$ and countable dense families of $\eta,\varphi$ give $u=\widetilde u$ in $V^*$ for almost every $(\omega,t)$.

Replace $u$ by this $V^*$-continuous representative. On one full-probability set, for every $t\in[0,T]$ and $\varphi\in V$, it holds
\begin{align}
\pair{u(t)}{\varphi}
={}&\pair{u_0}{\varphi}
-\int_0^t\int_{\mathbb T^d}F(r)\cdot\nabla\varphi\dd x\dd r
+\int_0^t\pair{\mathcal C_\xi(u(r))}{\varphi}\dd r\notag\\
&+\sum_{j=1}^\infty\int_0^t\pair{L_ju(r)}{\varphi}\dd B_r^j.
\label{eq:limit-unidentified}
\end{align}

By Mazur's lemma, tail convex combinations $\bar u_N$ converge strongly to $u$ in $L^2(\Omega\times(0,T);H)$. The Galerkin identity gives $\|\bar u_N(t,\omega)\|_H\le\|u_0(\omega)\|_H$. An almost-everywhere convergent subsequence then yields
\[
u(\cdot,\omega)\in L^\infty(0,T;H),
\qquad
\operatorname*{ess\,sup}_{t\in(0,T)}\norm{u(t,\omega)}_H
\le\norm{u_0(\omega)}_H.
\]
Lemma~\ref{lem:weak-H-representative} gives a $C_w([0,T];H)$ representative and extends the bound to every time. Thus
\begin{equation}\label{eq:main-limit-H-bound}
\mathbb{E}\sup_{t\in[0,T]}\norm{u(t)}_H^2
\le\mathbb{E}\norm{u_0}_H^2.
\end{equation}

For fixed $t$, test~\eqref{eq:Galerkin} with $P_N\varphi$ and multiply by a bounded random variable $Z$. Using~\eqref{eq:nonlinear-drift-weak-integrated} with $\zeta=Z\mathbf1_{[0,t]}$,~\eqref{eq:correction-limit}, and~\eqref{eq:stochastic-integral-weak}, we obtain
\[
\mathbb E[Z\langle {u_N(t)},{\varphi}\rangle]
\longrightarrow\mathbb E[Z\langle {u(t)},{\varphi}\rangle].
\]

The uniform $L^2(\Omega;H)$ bound, truncation in $Z$ and spatial density give
\begin{equation}\label{eq:fixed-time-weak}
u_N(t)\rightharpoonup u(t)
\quad\text{in }L^2(\Omega;H)
\quad\text{for every }t\in[0,T].
\end{equation}

\subsubsection{Energy identity before flux identification}

\begin{theorem}\label{thm:pre-Minty-main}
Let $(u,F)$ be the limit pair constructed in Subsection~\ref{subsec:galerkin-limit}. Then $u$ admits an $H$-continuous modification and, almost surely, for every $t\in[0,T]$,
\begin{equation}\label{eq:pre-Minty-energy-main}
\norm{u(t)}_H^2
+2\int_0^t\int_{\mathbb T^d}F(s,x)\cdot\nabla u(s,x)\dd x\dd s
=\norm{u_0}_H^2.
\end{equation}
\end{theorem}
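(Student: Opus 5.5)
The plan is to prove an Itô formula for $\|u\|_H^2$ directly from the unidentified limit equation~\eqref{eq:limit-unidentified}. This is the content of Proposition~\ref{prop:variable-Ito}, which the paper says adapts the time discretization of~\cite[Theorem~A.2]{GwiazdaWoznickiWroblewskaZimmermann2025}. The difficulty, already flagged in the introduction, is that we only know $\nabla u\in L^{\widetilde p}(Q_T)$ and $F\in L^{\widetilde p'}(Q_T)$. Hence $\Div F\in L^{\widetilde p'}$-type dual only in the variable-exponent sense, and the standard Krylov/Pardoux Itô formula in a fixed pair $L^q(0,T;V)$, $L^{q'}(0,T;V^*)$ is unavailable. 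What we do have is that $\int_{Q_T} F\cdot\nabla u$ is finite by~\eqref{eq:px-holder} applied pointwise in $(\omega,t)$ and Young's inequality, since $|F\cdot\nabla u|\le |F|^{p'(x)}/p'(x)+|\nabla u|^{p(x)}/p(x)$.

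\textbf{Step 1: the candidate identity.} Write~\eqref{eq:limit-unidentified} as $\dd u=(\Div F+\mathcal C_\xi(u))\dd t+B(u)\dd\mathbb W$, with $\mathcal C_\xi(u)=\kappa\Delta u\in L^2(\Omega\times(0,T);V^*)$ and $B(u)\in L^2(\Omega\times(0,T);\mathcal L_2(\mathcal U,H))$. Formally, Itô's formula gives
\[
\|u(t)\|_H^2=\|u_0\|_H^2-2\int_0^t\!\!\int F\cdot\nabla u+2\int_0^t\langle\kappa\Delta u,u\rangle+\int_0^t\|B(u)\|_{\mathcal L_2}^2+2\int_0^t\langle u,B(u)\dd\mathbb W\rangle .
\]
The two middle terms cancel by the identity preceding~\eqref{eq:main-weak-monotonicity}, since $\sum_j\xi_j\otimes\xi_j=2\kappa I_d$. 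The martingale vanishes because each $L_j$ is skew-adjoint, so $\langle u,L_ju\rangle=0$ for $u\in V$ with $\nabla u\in L^2$. Here $\nabla u\in L^2(Q_T)$ by~\eqref{eq:grad-L2-product}, so the cancellation is legitimate at a.e.\ $(\omega,t)$. This yields exactly~\eqref{eq:pre-Minty-energy-main}.

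\textbf{Step 2: rigorous Itô formula by time discretization.} Following Pr\'evot--R\"ockner's proof of the Krylov Itô formula (in the version of~\cite{GwiazdaWoznickiWroblewskaZimmermann2025}), I will use the algebraic identity $\|b\|^2-\|a\|^2=2\langle b,b-a\rangle-\|b-a\|^2$ on a partition $0=t_0<\dots<t_n=t$. The partition is chosen along a sequence for which the piecewise-constant (left and right) interpolants of $u$ converge to $u$ in $L^{\widetilde p}$ for the gradient and in $L^2(\Omega\times(0,T);H)$; such partitions exist by the averaging-over-shifts lemma. The drift contribution $\sum_i\int_{t_i}^{t_{i+1}}\int F\cdot\nabla u(t_{i+1})$ then converges to $\int\int F\cdot\nabla u$ by the variable-exponent duality on $Q_T$: we have weak convergence in one factor and strong convergence in the other, using $L^{\widetilde p}\times L^{\widetilde p'}$ Hölder with constant $2$. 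The correction term converges in the $L^2(V)\times L^2(V^*)$ pairing. The stochastic sum $\sum_i\langle u(t_i),M(t_{i+1})-M(t_i)\rangle$ converges to the Itô integral by adaptedness, and $\sum\|M(t_{i+1})-M(t_i)\|^2$ converges to the quadratic variation. Cross terms between the drift and the martingale increments vanish in the limit. This gives the identity at partition points, a.s., for a.e.\ $t$ in a dense set.

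\textbf{Step 3: all times and continuity.} The right side of~\eqref{eq:pre-Minty-energy-main} is continuous in $t$, so $t\mapsto\|u(t)\|_H$ is continuous on a full-measure set of $t$ and extends to a continuous function. Combined with $u\in C_w([0,T];H)$ from Lemma~\ref{lem:weak-H-representative}, weak continuity together with norm continuity gives strong $H$-continuity. The identity then holds for every $t$ on one full-probability set.

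The main obstacle is Step 2. Specifically, it is controlling the drift term when $F$ and $\nabla u$ are only in the variable-exponent dual pair on $Q_T$, without any $L^{q}(0,T;V)$ time integrability with fixed $q$. Closely related is choosing partitions so that the step interpolants converge in the modular sense. I would handle this by working throughout with the modular on $Q_T$ and the norm-modular relations of Lemma~\ref{lem:modular-norm}, which do hold in $L^{\widetilde p}(Q_T)$ since $\widetilde p$ is bounded.
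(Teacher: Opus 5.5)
\paragraph{Overall.} Your route is the paper's route. The theorem follows by applying the time-discretized It\^o formula of Proposition~\ref{prop:variable-Ito} to~\eqref{eq:limit-unidentified}, using shift-averaged nested partitions and modular convergence of the step gradients. However, two steps of your sketch do not work as written.

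\paragraph{Step 2: the remainder is misidentified.} The term that must vanish is not a drift--martingale cross term. Set
$d_i=u(t_{i+1})-u(t_i)-\Delta_i\mathbb M$, where $\Delta_i\mathbb M=\mathbb M(t_{i+1})-\mathbb M(t_i)$ and $A=F+\mathfrak K(u)$. Your algebraic identity then gives exactly
$\|u(t_{i+1})\|_H^2-\|u(t_i)\|_H^2=-2\int_{t_i}^{t_{i+1}}\!\int A\cdot\nabla u(t_{i+1})+2\langle u(t_i),\Delta_i\mathbb M\rangle+\|\Delta_i\mathbb M\|_H^2-\|d_i\|_H^2$.
The cross terms cancel identically, so what you must prove is $\sum_i\|d_i\|_H^2\to0$. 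This is the core of any Krylov-type argument, and it is delicate here. The quantity $d_i=\int_{t_i}^{t_{i+1}}\Div A$ is a priori only in $V^*$. It lies in $H$ only through the identity, which also forces all nodes into the full-measure set where $u(t)\in V$ a.s.

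The paper handles $i\ge1$ in Lemma~\ref{lem:internal-remainder}. It tests with $u(t_{i+1})-u(t_i)-\Pi_N\Delta_i\mathbb M$ and controls $\langle d_i,(\Pi_N-I)\Delta_i\mathbb M\rangle$ by the Hilbert quadratic-variation lemma. That argument needs both endpoints in $V$, so it fails on the first cell, because $u_0\notin V$ in general. There the paper proceeds in three steps:
\begin{enumerate}[label=(\roman*)]
\item drop $r_\ell^0\ge0$ to obtain an energy \emph{inequality} at the nodes;
\item deduce $\limsup_\ell\|u(t_1^\ell)\|_H^2\le\|u_0\|_H^2$, hence $u(t_1^\ell)\to u_0$ strongly by weak lower semicontinuity;
\item only then conclude $r_\ell^0\to0$.
\end{enumerate}
Your plan has no substitute for either of these two steps. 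Two smaller points are also glossed. The stochastic term needs localization, since only $\mathbb E\sup_t\|u\|_H^2<\infty$ and $B(u)\in L^2$ are known. Obtaining one null set for all nodes needs an almost-surely convergent diagonal subsequence.

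\paragraph{Step 3 is not valid.} Suppose the identity holds on a countable dense (or full-measure) set of times and $u$ is weakly $H$-continuous. Lower semicontinuity then only yields
$\|u(t)\|_H^2\le\|u_0\|_H^2-2\int_0^t\!\int F\cdot\nabla u$ for all $t$. A weakly continuous path can lose norm at exceptional times. So knowing that the norm restricted to the node set extends continuously says nothing about $\|u(t)\|_H$ off that set, and does not give norm continuity.

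The missing idea is to prove strong convergence $u(t_m)\to u(t)$ along nodes $t_m\uparrow t$. The paper applies the increment identity to $u-u(t_m)$ and writes $\|u(t_\ell)-u(t_m)\|_H^2$ as the sum of three pieces:
\begin{enumerate}[label=(\roman*)]
\item a drift term bounded by $2\|A\|_{L^{p'(\cdot)}}\|\nabla u-\nabla\widetilde u^m\|_{L^{p(\cdot)}((0,T)\times\mathbb T^d)}$, which is a.s.\ small by~\eqref{eq:global-step-errors-as};
\item a localized martingale error;
\item the term $\int_{t_m}^{t_\ell}\|B(u)\|_{\mathcal L_2}^2$.
\end{enumerate}
All three are small uniformly in $t$ and in $\ell>m$. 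Hence $(u(t_m))$ is Cauchy in $H$, and its limit is $u(t)$ by weak continuity. Only after this do the identity at every $t\in[0,T]$ and $u\in C([0,T];H)$ follow.
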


\begin{proof}
The bounds in Subsection~\ref{subsec:galerkin-limit} verify the hypotheses of Proposition~\ref{prop:variable-Ito}. Applying it to~\eqref{eq:limit-unidentified} gives~\eqref{eq:pre-Minty-energy-main} before identifying $F$.
\end{proof}

\subsubsection{Flux identification and global well-posedness}

For predictable $\zeta\in L^{\widetilde p}(Q_T;\mathbb R^d)$, monotonicity gives

\begin{equation}\label{eq:Minty-main-start}
\int_{Q_T}
\left(F_N-|\zeta|^{p(x)-2}\zeta\right)
\cdot(\nabla u_N-\zeta)\dd\mathbb P\dd t\dd x\ge0.
\end{equation}
By~\eqref{eq:main-Galerkin-energy},~\eqref{eq:fixed-time-weak}, and Theorem~\ref{thm:pre-Minty-main}, we have
\begin{align*}
\limsup_{N\to\infty}\mathbb E\int_0^T\!\int_{\mathbb T^d}F_N\cdot\nabla u_N\dd x\dd t
&\le\frac12\mathbb E\bigl(\norm{u_0}_H^2-\norm{u(T)}_H^2\bigr)\\
&=\mathbb E\int_0^T\!\int_{\mathbb T^d}F\cdot\nabla u\dd x\dd t.
\end{align*}
The other terms in~\eqref{eq:Minty-main-start} pass to the limit by~\eqref{eq:weak-FN} and~\eqref{eq:main-weak-grad}, since $\zeta\in L^{\widetilde p}$ and $|\zeta|^{p(x)-2}\zeta\in L^{\widetilde p'}$. We obtain
\[
\int_{Q_T}
\left(F-|\zeta|^{p(x)-2}\zeta\right)
\cdot(\nabla u-\zeta)\dd\mathbb P\dd t\dd x\ge0.
\]
For predictable $\Psi\in L^{\widetilde p}(Q_T;\mathbb R^d)$, set $\zeta_\varepsilon=\nabla u-\varepsilon\Psi$, $J(x,z)=|z|^{p(x)-2}z$ and $q(x)=p(x)/(p(x)-2)$. The estimate
\[
|J(x,a)-J(x,b)|\le C(|a|+|b|)^{p(x)-2}|a-b|
\]
and the identity $1/p'(x)=1/q(x)+1/p(x)$ give, by variable-exponent H\"older,
\[
\|J(\cdot,\zeta_\varepsilon)-J(\cdot,\nabla u)\|_{L^{\widetilde p'}(Q_T)}
\le C\|(|\zeta_\varepsilon|+|\nabla u|)^{p(\cdot)-2}\|_{L^{\widetilde q}(Q_T)}
\|\zeta_\varepsilon-\nabla u\|_{L^{\widetilde p}(Q_T)},
\]
where $\widetilde q(\omega,t,x)=q(x)$. The first term is uniformly bounded because its modular equals $\int_{Q_T}(|\zeta_\varepsilon|+|\nabla u|)^{p(x)}$, while the second term tends to zero. Thus $J(\cdot,\zeta_\varepsilon)\to J(\cdot,\nabla u)$ strongly in $L^{\widetilde p'}$ (see, e.g.,~\cite{DieningBook}). Substituting $\zeta_\varepsilon$ into the above, dividing by $\varepsilon>0$, and letting $\varepsilon\downarrow0$ therefore gives
\[
\int_{Q_T}\left(F-|\nabla u|^{p(x)-2}\nabla u\right)\cdot\Psi
\dd\mathbb P\dd t\dd x\ge0.
\]
Replacing $\Psi$ by $-\Psi$ yields
\begin{equation}\label{eq:main-flux-identification}
F=|\nabla u|^{p(x)-2}\nabla u
\quad\text{a.e. on }Q_T.
\end{equation}
Thus~\eqref{eq:limit-unidentified} is precisely the variational equation~\eqref{eq:variational-main}.

\begin{theorem}\label{thm:wellposed}
Let $p$ be periodic and log-H\"older continuous, with $2<p^-\le p^+<\infty$. Fix $\kappa>0$ and a deterministic profile $\theta$ satisfying~\eqref{eq:theta-assumptions}. For every $u_0\in L^2(\Omega,\mathcal F_0,\mathbb P;H)$, equation~\eqref{eq:ito-main} has a unique variational solution satisfying, for every $T>0$,
\begin{equation*}
u\in L^2\bigl(\Omega;C([0,T];H)\bigr),
\qquad
\mathbb{E}\int_0^T\rho_{p(\cdot)}(\nabla u(s))\dd s<\infty.
\end{equation*}
Moreover, almost surely, for every $t\ge0$,
\begin{equation}\label{eq:energy-identity}
\norm{u(t)}_{L^2}^2
+2\int_0^t\rho_{p(\cdot)}(\nabla u(s))\dd s
=\norm{u_0}_{L^2}^2.
\end{equation}
In particular, $t\mapsto\norm{u(t)}_{L^2}$ is nonincreasing.
\end{theorem}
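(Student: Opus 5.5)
The plan is to assemble the pieces already built in Subsections~\ref{subsec:galerkin-limit}--\ref{sec:wellposed-main} on a fixed horizon $T$, add pathwise uniqueness, and then extend to $[0,\infty)$ by consistency.

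\emph{Existence on $[0,T]$.} The Galerkin limit $u$ is adapted and has predictable representatives of $u$ and $\nabla u$ by Lemma~\ref{lem:predictable-closed}. It satisfies~\eqref{eq:limit-unidentified}, and by~\eqref{eq:main-flux-identification} this is exactly~\eqref{eq:variational-main}. Theorem~\ref{thm:pre-Minty-main} supplies an $H$-continuous modification together with~\eqref{eq:pre-Minty-energy-main}. Substituting $F=J_{p(\cdot)}(u)$ gives $F\cdot\nabla u=|\nabla u|^{p(x)}$, so~\eqref{eq:pre-Minty-energy-main} becomes~\eqref{eq:energy-identity} on $[0,T]$, almost surely for all $t$. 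Then:
\begin{itemize}
\item The bound~\eqref{eq:main-limit-H-bound} combined with continuity gives $u\in L^2(\Omega;C([0,T];H))$.
\item Taking expectations in~\eqref{eq:energy-identity} gives $\mathbb E\int_0^T\rho_{p(\cdot)}(\nabla u)\dd s\le\frac12\mathbb E\|u_0\|_H^2$.
\item Monotonicity of $t\mapsto\|u(t)\|_{L^2}$ is immediate, because the integrand in~\eqref{eq:energy-identity} is nonnegative.
\end{itemize}

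\emph{Uniqueness.} Let $u,v$ be two variational solutions with the stated integrability, and set $z=u-v$. Then $z$ solves a variational equation with drift $\Dpx u-\Dpx v+\kappa\Delta z\in V^*$ and noise $B(u)-B(v)=B(z)$. The point to note is that the drift lies in the variable-exponent dual: the flux difference belongs to $L^{\widetilde p'}(Q_T)$ and $\nabla z\in L^{\widetilde p}(Q_T)$. This is exactly the setting in which Proposition~\ref{prop:variable-Ito} (the Itô formula in Appendix~\ref{sec:appendix-framework}) applies, so I would invoke it for $\|z\|_H^2$. It gives
\[
\|z(t)\|_H^2=\int_0^t\Bigl(2\pair{\Dpx u-\Dpx v+\kappa\Delta z}{z}+\|B(z)\|_{\mathcal L_2}^2\Bigr)\dd s+2\sum_j\int_0^t\pair{L_jz}{z}\dd B^j_s .
\]
Each martingale integrand vanishes, since $L_j$ is skew-adjoint because $\xi_j$ is divergence-free. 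By~\eqref{eq:main-weak-monotonicity} the $\dd s$ integrand is nonpositive, so $\|z(t)\|_H^2\le0$ and $z\equiv0$ almost surely. Pathwise uniqueness also identifies the whole Galerkin sequence limit, so no subsequence dependence remains.

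\emph{Extension to all $t\ge0$.} For $T<T'$, the restriction to $[0,T]$ of the solution on $[0,T']$ is a variational solution on $[0,T]$. By uniqueness the two solutions agree on $[0,T]$ almost surely. Taking $T=n\in\mathbb N$ and a countable intersection of full-measure sets defines a single global process. It satisfies~\eqref{eq:energy-identity} for every $t\ge0$ on one full-probability set.

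\emph{Main obstacle.} I expect the main difficulty to be justifying the Itô formula for $z$ when the drift is only in the variable-exponent dual pairing, and not in $L^{(p^+)'}$. This is the same issue addressed by Proposition~\ref{prop:variable-Ito}. The first thing I would check is that its hypotheses hold for the difference of two solutions:
\begin{itemize}
\item predictable gradients in $L^{\widetilde p}$;
\item fluxes in $L^{\widetilde p'}$, which follows from~\eqref{eq:flux-modular} applied to $J_{p(\cdot)}(u)$ and $J_{p(\cdot)}(v)$ separately;
\item noise coefficients in $L^2(\mathcal L_2)$, which follows from~\eqref{eq:xi-linfty-sum} and $\nabla z\in L^2$ (a consequence of $p^->2$).
\end{itemize}
Linearity of the framework in the drift and noise then allows it to be applied to the difference equation.
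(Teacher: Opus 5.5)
Your proposal is correct and follows the paper's proof essentially step by step. You obtain existence and the energy identity~\eqref{eq:energy-identity} from the Galerkin construction, the flux identification~\eqref{eq:main-flux-identification} and Theorem~\ref{thm:pre-Minty-main}; you get pathwise uniqueness by applying Proposition~\ref{prop:variable-Ito} to $z=u-v$ together with~\eqref{eq:main-weak-monotonicity}; and you extend to all $t\ge0$ by identifying the solutions constructed on $[0,N]$, exactly as the paper does (which also records the slightly stronger contraction $\norm{u(t)-v(t)}_H^2\le\norm{u_0-v_0}_H^2$ that your argument yields as well).
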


\begin{proof}
The construction above and~\eqref{eq:main-flux-identification} give existence. Theorem~\ref{thm:pre-Minty-main} gives strong $H$ continuity and, after flux identification,~\eqref{eq:energy-identity}.

For two solutions $u,v$, set $z=u-v$. The gradient and flux differences belong to $L^{\widetilde p}$ and $L^{\widetilde p'}$, respectively, and $B(u)-B(v)=B(z)$ is square integrable by~\eqref{eq:xi-linfty-sum}. Proposition~\ref{prop:variable-Ito} applied to the difference equation gives
\[
\dd\norm{z}_H^2
=\left[
2\pair{\Dpx u-\Dpx v+\kappa\Delta z}{z}
+\norm{B(u)-B(v)}_{\mathcal L_2(\mathcal U,H)}^2
\right]\dd t,
\]
because $\pair{z}{\xi_j\cdot\nabla z}=0$ for every $j$.  The bracket is nonpositive by~\eqref{eq:main-weak-monotonicity}.  Hence
\[
\norm{u(t)-v(t)}_H^2\le\norm{u_0-v_0}_H^2.
\]
This gives pathwise uniqueness. Applying the construction on $[0,N]$, $N\in\mathbb N$, and identifying overlaps yields the global solution and~\eqref{eq:energy-identity} for all $t\ge0$ on one full-probability set.
\end{proof}

\subsection{The mild formulation}\label{sec:mild}

\begin{theorem}\label{thm:mild}
Suppose that Assumption~\ref{ass:p} holds. Let $u$ be the variational solution of~\eqref{eq:ito-main} and let $s\in[0,T]$ be deterministic. On one set of probability one, for every $t\in[s,T]$,
\begin{equation}\label{eq:mild}
\begin{aligned}
u(t)
={}&S\bigl(\kappa(t-s)\bigr)u(s)
+\int_s^t S\bigl(\kappa(t-r)\bigr)\Dpx u(r)\dd r\\
&+\sum_{k,i}\int_s^t S\bigl(\kappa(t-r)\bigr)
\bigl(\xi_{k,i}\cdot\nabla u(r)\bigr)\dd B_r^{k,i}.
\end{aligned}
\end{equation}
The deterministic convolution is initially a Bochner integral in $V^*$ and admits an $H$-continuous representative. The stochastic convolution also has an $H$-continuous modification, so~\eqref{eq:mild} holds in $H$. For any countable family of deterministic starting times, these assertions hold on a common set of probability one.
\end{theorem}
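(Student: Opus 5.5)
The plan is to derive \eqref{eq:mild} by testing the variational equation \eqref{eq:variational-main} against the time-dependent test function $\phi(r)=S(\kappa(t-r))\varphi$. Here $\varphi$ is smooth and mean zero, and we apply a product rule in the Gelfand triple. The heat term $\kappa\Delta u$ is absorbed by the derivative of the semigroup.

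\textbf{Step 1: the identity for each test function and time.} Fix $t\in[s,T]$ and smooth mean-zero $\varphi$. Set $\phi(r):=S(\kappa(t-r))\varphi$ for $r\in[s,t]$. This is smooth in $(r,x)$ and satisfies $\partial_r\phi=-\kappa\Delta\phi$. Applying It\^o's product rule to $\langle u(r),\phi(r)\rangle$, using \eqref{eq:variational-main} together with the deterministic smooth $C^1$ dependence of $\phi$, gives almost surely
\[
\langle u(t),\varphi\rangle=\langle u(s),S(\kappa(t-s))\varphi\rangle-\int_s^t\!\int_{\mathbb T^d}J_{p(\cdot)}(u)\cdot\nabla\phi(r)\dd x\dd r+\sum_j\int_s^t\langle\xi_j\cdot\nabla u,\phi(r)\rangle\dd B^j_r .
\]
In this identity the two $\kappa$-terms cancel. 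To justify the product rule I would partition $[s,t]$, write the telescoping sum, and pass to the limit. The ingredients are the $C_w(H)$ continuity of $u$, $J_{p(\cdot)}(u)\in L^{\widetilde p'}$, and the boundedness of $\nabla\phi$ in $L^\infty$.

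\textbf{Step 2: identifying the convolutions.} By the self-adjointness of $S$ and the dual realization from Lemma~\ref{lem:heat-C0-V}, the drift term equals $\langle\int_s^tS(\kappa(t-r))\Dpx u\dd r,\varphi\rangle$. Here the integral is a Bochner integral in $V^*$: the integrand is strongly measurable by strong continuity on $V^*$, and it is integrable by \eqref{eq:heat-Vstar-bounded} together with \eqref{eq:px-dual-bound}, since $\rho^{a_\pm}\in L^1$ in time. The stochastic term equals $\langle\int_s^tS(\kappa(t-r))B(u)\dd\mathbb W,\varphi\rangle$. This holds because $S$ is a contraction on $H$, so the integrand is in $L^2_{\mathcal P}(\mathcal L_2(\mathcal U,H))$, and the pairing passes inside the stochastic integral.

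\textbf{Step 3: $H$-continuity and a common null set.} For the deterministic convolution, Corollary~\ref{cor:heat-Vstar-H} gives
\[
\|S(\kappa(t-r))\Dpx u(r)\|_H\le C(t-r)^{-\eta}\bigl(\rho^{a_-}+\rho^{a_+}\bigr).
\]
Here $\eta<1$ under Assumption~\ref{ass:p}. By H\"older in time, with $\rho\in L^1$ and $a_\pm<1$, the function $(t-r)^{-\eta}\rho(r)^{a_\pm}$ is integrable provided $\eta/(1-a_\pm)<1$. This condition fails in general, and \textbf{this is the main obstacle}. I would avoid it by an identity argument. The mild identity from Steps 1--2 expresses the deterministic convolution as
\[
u(t)-S(\kappa(t-s))u(s)-(\text{stochastic convolution}),
\]
which is $H$-valued and $H$-continuous, being a difference of $H$-continuous objects (see below). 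That identity then supplies the $H$-continuous representative, and no absolute $H$-integrability is needed.

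For the stochastic convolution, $u$ is $H$-continuous and $r\mapsto S(\kappa(t-r))u(s)$ is $H$-continuous in $t$. The convolution itself is handled by the factorization method: the integrand is bounded in $L^2(\Omega;L^2(0,T;\mathcal L_2))$, and $S$ is a contraction semigroup. Equivalently, one can use the dilation/Kotelenez maximal inequality for contraction semigroups, which gives a continuous modification.

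Finally, the Step 1 identity holds for countably many $\varphi$ and rational $t$ on one null set. Both sides are $V^*$-continuous in $t$, so it extends to all $t$ and then to all $\varphi\in V$ by density. The union over countably many starting times $s$ is still null.
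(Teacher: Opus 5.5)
Your proposal is correct and is essentially the paper's argument. The paper tests with the Fourier modes $e_\ell$, for which $S(\kappa(t-r))e_\ell=e^{-\kappa\lambda_\ell(t-r)}e_\ell$, so your time-dependent product rule reduces to a scalar one; it then passes to general tests by density, takes the $H$-continuous representative of the deterministic convolution from the same identity $u(t)-S(\kappa(t-s))u(s)-Z_s(t)$, and gets continuity of the stochastic convolution from the contraction-semigroup theorem of van Neerven--Veraar, which is your dilation/Kotelenez option.

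One caveat: the factorization method does not work from the $L^2$-in-time bound you quote. It needs $\mathbb E\int_0^T\|B(u)\|_{\mathcal L_2(\mathcal U,H)}^q\dd r<\infty$ for some $q>2$. That bound does hold here with $q=p^-$, via $\|\nabla u\|_{L^2}^2\lesssim\rho_{p(\cdot)}(\nabla u)^{2/p^-}+\rho_{p(\cdot)}(\nabla u)^{2/p^+}$, but you would have to say so; otherwise rely on the Kotelenez/dilation route.
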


\begin{proof}
Let $D(r)=\rho_{p(\cdot)}(\nabla u(r))$ and use the transport coefficient $B(u(r))$ defined above. By Proposition~\ref{prop:px-properties} and the energy identity, we obtain
\[
\int_s^T\|\Dpx u(r)\|_{V^*}\dd r
\le2\int_s^T(D(r)^{a_-}+D(r)^{a_+})\dd r<\infty
\quad\text{almost surely}.
\]
The boundedness and strong continuity of $S$ on $V^*$ therefore yield a continuous $V^*$-valued deterministic convolution. Dual heat smoothing together with Tonelli's theorem then gives
\begin{align*}
&~~\int_s^T\left\|\int_s^t S(\kappa(t-r))\Dpx u(r)\dd r\right\|_H\dd t\\
&\le C\kappa^{-\eta}\int_s^T\int_s^t(t-r)^{-\eta}
(D(r)^{a_-}+D(r)^{a_+})\dd r\dd t\\
&=\frac{C\kappa^{-\eta}}{1-\eta}\int_s^T(T-r)^{1-\eta}
(D(r)^{a_-}+D(r)^{a_+})\dd r<\infty.
\end{align*}
Thus the deterministic convolution is Bochner integrable in $H$ for almost every $t$.

The process $B(u)$ is predictable, and~\eqref{eq:xi-linfty-sum} gives
\begin{equation}\label{eq:mild-HS}
\mathbb E\int_s^T\|B(u(r))\|_{\mathcal L_2(\mathcal U,H)}^2\dd r
\le4d\kappa\mathbb E\int_s^T\|\nabla u(r)\|_{L^2}^2\dd r<\infty.
\end{equation}
The last inequality follows from $p^->2$ and the modular bound. Since $S$ is a strongly continuous contraction semigroup on $H$,~\cite[Theorem~4.1]{vanNeervenVeraar2022} gives an $H$-continuous modification of
\[
Z_s(t):=\int_s^tS(\kappa(t-r))B(u(r))\dd\mathbb W_r.
\]

For every nonzero Fourier mode $e_\ell$, test the variational equation by $e_\ell$, multiply by $e^{\kappa\lambda_\ell r}$ and integrate over $[s,t]$ to obtain
\begin{align*}
\langle u(t),e_\ell\rangle
={}&e^{-\kappa\lambda_\ell(t-s)}\langle u(s),e_\ell\rangle
+\int_s^t e^{-\kappa\lambda_\ell(t-r)}\langle\Dpx u(r),e_\ell\rangle\dd r\\
&+\sum_{k,i}\int_s^t e^{-\kappa\lambda_\ell(t-r)}
\langle\xi_{k,i}\cdot\nabla u(r),e_\ell\rangle\dd B_r^{k,i}.
\end{align*}
Partial Fourier sums of a smooth test converge in $V$. Drift integrability,~\eqref{eq:mild-HS} and density therefore give~\eqref{eq:mild} in $V^*$. A countable intersection over Fourier modes and dense times, followed by $V^*$ continuity, makes the identity valid for every $t\in[s,T]$ on one full-probability set. The deterministic convolution equals
\[
u(t)-S(\kappa(t-s))u(s)-Z_s(t),
\]
which is an $H$-continuous process. This proves the claimed $H$ representation. A further countable intersection handles any prescribed countable family of starting times.
\end{proof}

\section{Averaged dissipation enhancement}\label{sec:average}

Throughout this section, set $m=p^+$ and use $\eta<1$ from~\eqref{eq:eta-def-intro}.
Fix $R>0$, $\mu>0$, $t_0>0$, and define
\[
t_n:=nt_0,
\qquad n\in\mathbb N_0.
\]

For later use, define
\begin{equation}\label{eq:K2-def}
K_2(\kappa,t_0,R):=C_0\kappa^{-\eta}
\left(t_0^{1/p^- -\eta}R^{1-2/p^-}
+t_0^{1/p^+ -\eta}R^{1-2/p^+}\right),
\end{equation}
where $C_0:=C/(1-\eta)$ and $C>0$ depends on $d$, $p^-$, $p^+$ and the variable-exponent embedding constants. The factor $(1-\eta)^{-1}$ comes from integrating the kernel in~\eqref{eq:v2-time}. For fixed $t_0$ and $R$, one has $K_2(\kappa,t_0,R)\to0$ as $\kappa\to\infty$.

Let $\beta>d/2+1$ and choose $\alpha>1$ with $1/\alpha+1/\beta=1$.
Define
\begin{equation*}
\mathscr L
:=\left(\sum_{\ell\in\mathbb Z_0^d}\lambda_\ell^{1-\beta}\right)^{1/\beta}<\infty
\end{equation*}
and
\begin{equation}\label{eq:Gamma-def}
\Gamma(\mu,R)
:=2\hat{C}_e^2\left(
\mu^{2/p^+}
+\mu^{2/p^-}R^{2(p^+/p^- -1)}
\right).
\end{equation}
Finally, set
\begin{align}
C_1(\kappa,t_0,R)
&:=\left[
6\left(\frac1{(4\pi^2\kappa t_0)^2}
+K_2(\kappa,t_0,R)^2\right)
\right]^{(m-2)/2},\label{eq:C1-def}\\
C_2(\theta,\mu,R)
&:=\left[
6d\,\Gamma(\mu,R)^{1/\alpha}
\mathscr L\norm{\theta}_{\ell^\infty}^{2/\beta}
\right]^{(m-2)/2}.\label{eq:C2-def}
\end{align}

\begin{proposition}\label{prop:one-step}
Let $\norm{u_0}_{L^2}\le R$.  Suppose that $\kappa$ and $\theta$ are chosen so that
\begin{equation}\label{eq:smallness-one-step}
\max\{C_1(\kappa,t_0,R),C_2(\theta,\mu,R)\}
\le\frac1{1+\mu(m-2)t_0R^{m-2}}.
\end{equation}
Then, for every $n\in\mathbb N_0$,
\begin{equation}\label{eq:one-step}
\mathbb{E}\norm{u(t_{n+1})}_{L^2}^2
\le
\frac{\mathbb{E}\norm{u(t_n)}_{L^2}^2}
{\left(1+\mu(m-2)t_0
\bigl(\mathbb{E}\norm{u(t_n)}_{L^2}^2\bigr)^{(m-2)/2}
\right)^{2/(m-2)}}.
\end{equation}
\end{proposition}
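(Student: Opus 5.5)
By the Markov property and pathwise uniqueness (Theorem~\ref{thm:wellposed}), together with $\|u(t_n)\|_{L^2}\le R$ from the energy identity, it suffices to treat $n=0$ with a random $\mathcal F_0$-measurable datum $u_0$, $\|u_0\|_{L^2}\le R$, and then condition. The shell $R$ is preserved by monotonicity, so \eqref{eq:smallness-one-step} holds at every step. Write $X_0=\mathbb E\|u_0\|_{L^2}^2$, $X(t)=\mathbb E\|u(t)\|_{L^2}^2$ and $D(r)=\rho_{p(\cdot)}(\nabla u(r))$.

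\textbf{Dichotomy.} I follow the large/small dissipation split of~\cite{DongLuoTang2023}. Let $\gamma:=X_0\bigl(1+\mu(m-2)t_0X_0^{(m-2)/2}\bigr)^{-2/(m-2)}$ be the target value.
\begin{itemize}[leftmargin=*]
\item \emph{Case (a):} $\mathbb E\int_0^{t_0}D\,\dd r$ is large. Using Lemma~\ref{lem:modular-lower} and Jensen with the energy identity \eqref{eq:energy-identity}, $X(t_0)\le X_0-2\mathbb E\int_0^{t_0}D\,\dd r\le\gamma$ directly.
\item \emph{Case (b):} $\mathbb E\int_0^{t_0}D\,\dd r\le \frac12(X_0-\gamma)$, which is of size at most $\mu t_0X_0^{m/2}$. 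Here I use the mild formula \eqref{eq:mild} with $s=0$, $t=t_0$ and split
\[
u(t_0)=S(\kappa t_0)u_0+I_1+I_2,
\]
so that $X(t_0)\le 3\bigl(\|S(\kappa t_0)u_0\|^2+\mathbb E\|I_1\|^2+\mathbb E\|I_2\|^2\bigr)$. (The constant $6$ in $C_1,C_2$ presumably comes from this splitting together with a time average of $X$.)
\end{itemize}

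\textbf{Estimates in case (b).}
\begin{itemize}[leftmargin=*]
\item \emph{Linear term.} The spectral gap gives $e^{-2\lambda_1\kappa t_0}X_0\le(4\pi^2\kappa t_0)^{-2}X_0$.
\item \emph{Nonlinear convolution $I_1$.} Corollary~\ref{cor:heat-Vstar-H} and \eqref{eq:px-dual-bound} give $\|I_1\|_H\le C\kappa^{-\eta}\int_0^{t_0}(t_0-r)^{-\eta}(D^{a_-}+D^{a_+})\,\dd r$. Hölder in time with exponents $p^\pm$ and $p^\pm{}'$ converts each power into $t_0^{1/p^\pm-\eta}\bigl(\int D\bigr)^{a_\pm}$. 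The pathwise energy identity gives $\int_0^{t_0}D\le\frac12\|u_0\|^2$. I split $D^{a_\pm}$ so that one factor $\bigl(\int D\bigr)^{1/2}$, controlled in mean by $X_0$, remains while the rest is bounded by $R^{1-2/p^\pm}$. This yields $\mathbb E\|I_1\|^2\le K_2^2X_0$.
\item \emph{Stochastic convolution $I_2$.} By the Itô isometry and a Fourier expansion, $\mathbb E\|I_2\|^2=\sum_\ell\mathbb E\int_0^{t_0}e^{-2\kappa\lambda_\ell(t_0-r)}\sum_{k,i}|\langle\xi_{k,i}\cdot\nabla u,e_\ell\rangle|^2\,\dd r$. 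I integrate by parts, using $\xi\cdot\nabla u=\Div(\xi u)$ to move the derivative onto $e_\ell$. Bessel's inequality over $k$ bounds the inner sum by $\kappa\|\theta\|_{\ell^\infty}^2\lambda_\ell\|u\|^2$. Interpolating this with the crude bound $\kappa\lambda_\ell\,\mathbb E\|u\|^2$ via Hölder with exponents $(\alpha,\beta)$, and integrating the exponential in time, produces $\mathscr L\|\theta\|_{\ell^\infty}^{2/\beta}$ times a $1/\alpha$ power of a time average of $\mathbb E\|\nabla u\|^2$-type quantities.
\item \emph{Small-dissipation bound for $I_2$.} The smallness of $\mathbb E\int D$ in case (b), together with \eqref{eq:embedding-L2} and \eqref{eq:modular-norm-1}, bounds these quantities by $\mu^{2/p^+}X_0+\mu^{2/p^-}X_0^{p^+/p^-}$. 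On $X_0\le R^2$ this is at most $\Gamma(\mu,R)X_0$, using $X^{p^+/p^-}\le R^{2(p^+/p^--1)}X$.
\end{itemize}

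\textbf{Closing the step.} Collecting the three bounds gives $X(t_0)\le\max\{C_1,C_2\}^{2/(m-2)}X_0$. Hypothesis \eqref{eq:smallness-one-step}, combined with the monotonicity of $x\mapsto(1+\mu(m-2)t_0x^{(m-2)/2})^{-1}$ on $x\le R^2$, shows that this is at most $\gamma$.

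\textbf{Main obstacle.} I expect the stochastic convolution to be the hardest step. The difficulty is obtaining the exact form $\Gamma^{1/\alpha}\mathscr L\|\theta\|_{\ell^\infty}^{2/\beta}$ with the two flux regimes separated. In particular, the gradient term must be routed through $\rho_{p(\cdot)}$ rather than a fixed $L^p$ norm, which is where the constant $2\hat C_e^2$ in \eqref{eq:Gamma-def} enters.
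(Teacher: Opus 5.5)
\textbf{There is a genuine gap in case (b).} You evaluate the mild formula at the single time $t_0$, and at that endpoint the nonlinear convolution cannot be controlled by the information you use.

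Your H\"older step needs $(t_0-r)^{-\eta}\in L^{p^\pm}(0,t_0)$, i.e.\ $\eta p^\pm<1$. But $\eta>\tfrac12$ and $p^\pm>2$, so $\eta p^\pm>1$ and $\int_0^{t_0}(t_0-r)^{-\eta p^\pm}\dd r=\infty$. The power $t_0^{1/p^\pm-\eta}$ you obtain is only formal. No other pairing rescues it: knowing only $D\in L^1$ in time, $D^{a_\pm}$ lies only in $L^{(p^\pm)'}$, which forces the kernel into $L^{p^\pm}$. Concretely, the bound $\int_0^{t_0}D\le R^2/2$ does not exclude a burst $D=c\,\varepsilon^{-1}\mathbf 1_{[t_0-\varepsilon,t_0]}$ with $c\le R^2/2$. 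For such $D$ one gets $\int_0^{t_0}(t_0-r)^{-\eta}D^{a_\pm}\dd r\simeq\varepsilon^{1-\eta-a_\pm}\to\infty$, since $\eta+a_\pm>1$.

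The stochastic term fails in the same way as you describe it. At the endpoint, the weight $\lambda_\ell e^{-2\kappa\lambda_\ell(t_0-\tau)}$ on the gradient factor cannot be ``integrated out'', because $\|\nabla u(\tau)\|_{L^2}^2$ is only integrable in $\tau$. Bounding the exponential by $1$ leaves $\sum_\ell\lambda_\ell|\langle\xi_{k,i}\cdot\nabla u,e_\ell\rangle|^2$, which requires $H^2$ control of $u$. Bounding the weight by $(2e\kappa(t_0-\tau))^{-1}$ leaves a non-integrable singularity.

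\textbf{The missing idea.} Use the pathwise monotonicity of $t\mapsto\|u(t)\|_{L^2}$ to replace the endpoint by a time average:
\[
\|u(t_{n+1})\|_{L^2}\le\frac1{t_0}\int_{t_n}^{t_{n+1}}\|u(r)\|_{L^2}\dd r\le\sum_{j=1}^3\frac1{t_0}\int_{t_n}^{t_{n+1}}\|v_j(r)\|_{L^2}\dd r .
\]
Here $v_1,v_2,v_3$ are the three pieces of the mild formula started at $t_n$, valid for all $r$ on one full-probability set. This repairs both convolution estimates.
\begin{itemize}
\item \emph{Nonlinear term.} After Tonelli the kernel becomes $\int_\tau^{t_{n+1}}(r-\tau)^{-\eta}\dd r\le t_0^{1-\eta}/(1-\eta)$, so only $\eta<1$ is used. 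H\"older is then applied against the constant function, $\int D^{1-1/r}\le t_0^{1/r}(\int D)^{1-1/r}$, which produces $K_2$.
\item \emph{Stochastic term.} Jensen moves the square inside the average. The $r$-integration gives $\int_\tau^{t_{n+1}}\lambda_\ell e^{-2\kappa\lambda_\ell(r-\tau)}\dd r\le(2\kappa)^{-1}$, which cancels the $\kappa$ in the noise covariance. This yields the $\kappa$-independent bound $d\,\Gamma(\mu,R)t_0X_n$ for the gradient factor. The partner of the $\lambda_\ell\|u\|_{L^2}^2$ bound in the $(\alpha,\beta)$ interpolation must be this gradient bound, not a second bound of the form $\kappa\lambda_\ell\|u\|^2$.
\end{itemize}

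\textbf{Minor points.} The constant $6$ is $2\cdot 3$: the $3$ comes from squaring a sum of three terms, and the $2$ from $\tfrac12(x+y)\le\max\{x,y\}$. It does not come from a time average of $X$. The rest of your outline is sound: the dichotomy, the linear term, the use of $X_n^{p^+/p^-}\le R^{2(p^+/p^--1)}X_n$, and the final use of $X_n\le R^2$. No Markov reduction or conditioning is needed, since every estimate is taken in expectation directly with the random datum $u(t_n)$.
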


For a fixed $n$, denote
\begin{equation*}
X_n:=\mathbb{E}\norm{u(t_n)}_{L^2}^2,
\qquad
\mathcal D_n:=\mathbb{E}\int_{t_n}^{t_{n+1}}
\rho_{p(\cdot)}(\nabla u(s))\dd s.
\end{equation*}
We split the proof into the following two cases:
\begin{align}
\text{Case 1:}\quad &\mathcal D_n\ge\mu t_0X_n^{m/2},\label{eq:case1}\\
\text{Case 2:}\quad &\mathcal D_n<\mu t_0X_n^{m/2}.\label{eq:case2}
\end{align}

\subsection{Convolution estimates}

For $r\in[t_n,t_{n+1}]$, define
\begin{align*}
v_1(r)&:=e^{\kappa(r-t_n)\Delta}u(t_n),\\
v_2(r)&:=\int_{t_n}^r e^{\kappa(r-\tau)\Delta}\Dpx u(\tau)\dd\tau,\\
v_3(r)&:=\sum_{k,i}\int_{t_n}^r e^{\kappa(r-\tau)\Delta}
\bigl(\xi_{k,i}\cdot\nabla u(\tau)\bigr)\dd B_\tau^{k,i}.
\end{align*}
By Theorem~\ref{thm:mild}, upon choosing the common full-probability set associated with the grid times,
\[
u(r)=v_1(r)+v_2(r)+v_3(r)
\]
in $H$ for every $r\in[t_n,t_{n+1}]$.
Since $t\mapsto\norm{u(t)}_{L^2}$ is nonincreasing, it follows that
\begin{equation}\label{eq:average-triangle}
\begin{aligned}
\norm{u(t_{n+1})}_{L^2}
&\le\frac1{t_0}\int_{t_n}^{t_{n+1}}\norm{u(r)}_{L^2}\dd r\le\sum_{j=1}^3\frac1{t_0}\int_{t_n}^{t_{n+1}}
\norm{v_j(r)}_{L^2}\dd r.
\end{aligned}
\end{equation}

\begin{lemma}\label{lem:v2}
Almost surely, the following estimate holds
\begin{equation}\label{eq:v2-est}
\frac1{t_0}\int_{t_n}^{t_{n+1}}\norm{v_2(r)}_{L^2}\dd r
\le K_2(\kappa,t_0,R)\norm{u(t_n)}_{L^2}.
\end{equation}
\end{lemma}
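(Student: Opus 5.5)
We combine dual heat smoothing (Corollary~\ref{cor:heat-Vstar-H}) with the flux bound \eqref{eq:px-dual-bound}, and then use the pathwise energy identity \eqref{eq:energy-identity} on $[t_n,t_{n+1}]$ to control the two flux powers separately. Write $D(\tau)=\rho_{p(\cdot)}(\nabla u(\tau))$. For $r\in[t_n,t_{n+1}]$, Corollary~\ref{cor:heat-Vstar-H} applied at time $\kappa(r-\tau)$ gives
\[
\norm{v_2(r)}_{L^2}\le C\kappa^{-\eta}\int_{t_n}^r(r-\tau)^{-\eta}\bigl(D(\tau)^{a_-}+D(\tau)^{a_+}\bigr)\dd\tau .
\]
The $V^*$-Bochner integral has an $H$-valued representative by Theorem~\ref{thm:mild}, so the $H$-norm of the integral is bounded by the integral of the $H$-norms. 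This follows by pairing with $\phi\in H$ and using the adjoint realization.

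We average over $r$ and apply Tonelli, exactly as in the proof of Theorem~\ref{thm:mild}:
\begin{equation}\label{eq:v2-time}
\frac1{t_0}\int_{t_n}^{t_{n+1}}\norm{v_2(r)}_{L^2}\dd r
\le\frac{C\kappa^{-\eta}}{(1-\eta)t_0}\int_{t_n}^{t_{n+1}}(t_{n+1}-\tau)^{1-\eta}\bigl(D^{a_-}+D^{a_+}\bigr)\dd\tau
\le\frac{C\kappa^{-\eta}t_0^{-\eta}}{1-\eta}\int_{t_n}^{t_{n+1}}\bigl(D^{a_-}+D^{a_+}\bigr)\dd\tau .
\end{equation}
This uses $(t_{n+1}-\tau)^{1-\eta}\le t_0^{1-\eta}$, which holds since $\eta<1$. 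Each power $a_\pm<1$ is then handled by Jensen/H\"older in time:
\[
\int_{t_n}^{t_{n+1}}D^{a_\pm}\dd\tau\le t_0^{1-a_\pm}\Bigl(\int_{t_n}^{t_{n+1}}D\dd\tau\Bigr)^{a_\pm}.
\]
Note that $1-a_\pm=1/p^\pm$. The energy identity on $[t_n,t_{n+1}]$ gives
\[
\int_{t_n}^{t_{n+1}}D\dd\tau\le\tfrac12\norm{u(t_n)}_{L^2}^2 .
\]
Hence
\[
\int_{t_n}^{t_{n+1}} D^{a_\pm}\dd\tau\le t_0^{1/p^\pm}\norm{u(t_n)}_{L^2}^{2a_\pm}=t_0^{1/p^\pm}\norm{u(t_n)}_{L^2}^{1-2/p^\pm}\norm{u(t_n)}_{L^2}.
\]
Here $2a_\pm-1=1-2/p^\pm>0$.

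Finally, $L^2$ monotonicity gives $\norm{u(t_n)}_{L^2}\le\norm{u_0}_{L^2}\le R$, so $\norm{u(t_n)}_{L^2}^{1-2/p^\pm}\le R^{1-2/p^\pm}$. Collecting the factors $t_0^{1/p^\pm-\eta}R^{1-2/p^\pm}$ gives exactly $K_2(\kappa,t_0,R)\norm{u(t_n)}_{L^2}$ with $C_0=C/(1-\eta)$, and this holds almost surely on the common full-probability set of Theorem~\ref{thm:mild}.

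The only delicate point is the first step: justifying that $\norm{v_2(r)}_H$ is dominated by $\int\norm{S(\kappa(r-\tau))\Dpx u(\tau)}_H\dd\tau$ when the integral is a priori only a $V^*$-Bochner integral. We would handle this by testing against $\phi\in H$ (or a dense subset of $V$), moving the pairing inside the $V^*$ integral, and using the duality bound of Corollary~\ref{cor:heat-Vstar-H} together with the fact that the $H$-representative agrees with the $V^*$ one. Everything after that is the routine H\"older-plus-energy computation above.
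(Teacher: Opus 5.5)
Your proposal is correct and follows the paper's proof of Lemma~\ref{lem:v2} essentially step for step. The steps are: dual heat smoothing of $\Dpx u(\tau)$, obtained by duality on $H$; the two-power flux bound of Lemma~\ref{lem:flux-norm}; Tonelli with the integrable kernel $(r-\tau)^{-\eta}$; H\"older in time for each power $a_\pm$ combined with the pathwise energy identity; and finally $\|u(t_n)\|_{L^2}^{2-2/p^\pm}\le R^{1-2/p^\pm}\|u(t_n)\|_{L^2}$. Your treatment of the $V^*$-versus-$H$ point, by testing against $\phi\in H$, is exactly the paper's ``duality on $H$'' step.
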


\begin{proof}
Set $D(\tau)=\rho_{p(\cdot)}(\nabla u(\tau))$. For almost every $r\in(t_n,t_{n+1})$, $v_2(r)$ has zero mean. By duality on $H$, together with Lemma~\ref{lem:heat-variable} and Lemma~\ref{lem:flux-norm}, we obtain
\begin{align*}
\|v_2(r)\|_{L^2}
&\le\int_{t_n}^r\sup_{\substack{\phi\in H\\\|\phi\|_{L^2}=1}}
|\langle\Dpx u(\tau),S(\kappa(r-\tau))\phi\rangle|\dd\tau\\
&\le C\kappa^{-\eta}\int_{t_n}^r(r-\tau)^{-\eta}
\bigl(D(\tau)^{a_-}+D(\tau)^{a_+}\bigr)\dd\tau.
\end{align*}
By Tonelli's theorem and the condition $\eta<1$, we have
\begin{align}
\frac1{t_0}\int_{t_n}^{t_{n+1}}\|v_2(r)\|_{L^2}\dd r
&\le\frac{C\kappa^{-\eta}}{(1-\eta)t_0}
\int_{t_n}^{t_{n+1}}(t_{n+1}-\tau)^{1-\eta}
\bigl(D^{a_-}+D^{a_+}\bigr)\dd\tau\notag\\
&\le C_0\kappa^{-\eta}t_0^{-\eta}
\int_{t_n}^{t_{n+1}}\bigl(D^{a_-}+D^{a_+}\bigr)\dd\tau.\label{eq:v2-time}
\end{align}
For each $r\in\{p^-,p^+\}$, H\"older's inequality together with the pathwise energy identity yields
\[
\int_{t_n}^{t_{n+1}}D^{1-1/r}\dd\tau
\le t_0^{1/r}\left(\int_{t_n}^{t_{n+1}}D\dd\tau\right)^{1-1/r}
\le t_0^{1/r}\left(\frac{\|u(t_n)\|_{L^2}^2}{2}\right)^{1-1/r}.
\]
Consequently,
\begin{align*}
\frac1{t_0}\int_{t_n}^{t_{n+1}}\|v_2(r)\|_{L^2}\dd r
\le C_0\kappa^{-\eta}\bigl(&t_0^{1/p^- -\eta}\|u(t_n)\|_{L^2}^{2-2/p^-}\\
&+t_0^{1/p^+ -\eta}\|u(t_n)\|_{L^2}^{2-2/p^+}\bigr).
\end{align*}
Since $p^\pm>2$ and $\|u(t_n)\|_{L^2}\le R$, we have
$\|u(t_n)\|_{L^2}^{2-2/p^\pm}\le R^{1-2/p^\pm}\|u(t_n)\|_{L^2}$.
Substituting the preceding estimates yields~\eqref{eq:v2-est} with $K_2$ as in~\eqref{eq:K2-def}.
\end{proof}

\begin{lemma}\label{lem:v3}
Assume that~\eqref{eq:case2} holds.  Then
\begin{equation}\label{eq:v3-est}
\mathbb{E}\left(
\frac1{t_0}\int_{t_n}^{t_{n+1}}\norm{v_3(r)}_{L^2}\dd r
\right)^2
\le d\,\Gamma(\mu,R)^{1/\alpha}
\mathscr L\norm{\theta}_{\ell^\infty}^{2/\beta}X_n.
\end{equation}
\end{lemma}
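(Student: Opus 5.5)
We follow the scheme of \cite{DongLuoTang2023}. First, Jensen (Cauchy--Schwarz in time) reduces the problem to a single time integral:
\[
\mathbb E\Bigl(\frac1{t_0}\int_{t_n}^{t_{n+1}}\|v_3(r)\|_{L^2}\dd r\Bigr)^2\le\frac1{t_0}\int_{t_n}^{t_{n+1}}\mathbb E\|v_3(r)\|_{L^2}^2\dd r .
\]
We then expand $v_3(r)$ in Fourier modes. For each $\ell\in\mathbb Z_0^d$ we have
\[
\langle v_3(r),e_\ell\rangle=\sum_{k,i}\int_{t_n}^r e^{-\kappa\lambda_\ell(r-\tau)}\langle\xi_{k,i}\cdot\nabla u(\tau),e_\ell\rangle\dd B^{k,i}_\tau .
\]
The It\^o isometry gives $\mathbb E\|v_3(r)\|^2=\sum_\ell\mathbb E\int_{t_n}^r e^{-2\kappa\lambda_\ell(r-\tau)}\sum_{k,i}|\langle\xi_{k,i}\cdot\nabla u,e_\ell\rangle|^2\dd\tau$.

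The key step is the integration by parts $\langle\xi_{k,i}\cdot\nabla u,e_\ell\rangle=-\langle u,\xi_{k,i}\cdot\nabla e_\ell\rangle$, which uses $\Div\xi_{k,i}=0$. This moves the derivative onto $e_\ell$. Since $\xi_{k,i}\cdot\nabla e_\ell$ is proportional to $\theta_k(a_{k,i}\cdot\ell)e_{\ell\pm k}$, we obtain
\[
\sum_{k,i}|\langle\xi_{k,i}\cdot\nabla u,e_\ell\rangle|^2\le C\kappa\lambda_\ell\sum_k\theta_k^2|\hat u(\ell-k)|^2 .
\]
Here the two modes $\pm k$ from the sine/cosine pairing are combined and $\sum_i|a_{k,i}\cdot\ell|^2\le|\ell|^2$; the constant ends up as $C_d(d-1)\cdot$const, which is where the factor $d$ comes from. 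Integrating the time kernel from $\tau$ to $t_{n+1}$ gives at most $(2\kappa\lambda_\ell)^{-1}$, which cancels the $\kappa\lambda_\ell$ prefactor. After Tonelli this leaves
\[
\frac{d}{t_0}\,\mathbb E\int_{t_n}^{t_{n+1}}\sum_\ell\sum_k\theta_k^2|\hat u(\tau,\ell-k)|^2\dd\tau .
\]
This form is useless as it stands, because summing over $\ell$ yields only $\|u\|_{L^2}^2$. We therefore keep a fraction $\lambda_\ell^{-\varepsilon}$ of the heat decay. Instead of integrating the kernel fully, we bound it by $\lambda_\ell^{-1}\lambda_\ell^{\,1-1/\alpha}$-type weights via the Hölder split.

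Concretely, apply Hölder in $k$ with exponents $(\beta,\alpha)$ to separate $\theta_k^2\le\|\theta\|_{\ell^\infty}^{2/\beta}\theta_k^{2/\alpha}$. Then apply Hölder in $\ell$ against the weights $\lambda_\ell^{1-\beta}$. The summable weight is precisely $\mathscr L^\beta$, and this requires $\beta>d/2+1$. Bessel's inequality, together with $\sum_k\theta_k^2=1$, reduces the dual factor to $\bigl(\mathbb E\frac1{t_0}\int\|u\|_{\dot H^{\gamma}}^{2}\bigr)^{1/\alpha}$ for a suitable negative/positive Sobolev index. That factor is in turn controlled by $\|\nabla u\|_{L^2}^2$ and $\|u\|_{L^2}^2$. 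The result is a bound of the form
\[
d\,\mathscr L\|\theta\|_{\ell^\infty}^{2/\beta}\Bigl(\frac1{t_0}\mathbb E\int_{t_n}^{t_{n+1}}\|\nabla u\|_{L^2}^2\dd\tau\Bigr)^{1/\alpha}X_n^{1/\beta}.
\]

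It remains to use Case 2. Since $p^->2$, on the unit torus we have $\|\nabla u\|_{L^2}^2\le\hat C_e^2\|\nabla u\|_{L^{p(\cdot)}}^2\le\hat C_e^2(D^{2/p^+}+D^{2/p^-})$ by \eqref{eq:modular-norm-2}, with $D=\rho_{p(\cdot)}(\nabla u)$. By Jensen (concavity of $D\mapsto D^{2/p^\pm}$) and \eqref{eq:case2}, the quantity $\frac1{t_0}\mathbb E\int D^{2/p^\pm}$ is at most $(\mathcal D_n/t_0)^{2/p^\pm}\le\mu^{2/p^\pm}X_n^{p^+/p^\pm}$. Using $X_n\le R^2$ we get $X_n^{p^+/p^-}\le R^{2(p^+/p^--1)}X_n$, so the dissipation factor is at most $\Gamma(\mu,R)X_n/2\cdot2$. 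Consequently the whole bound is at most $d\Gamma^{1/\alpha}\mathscr L\|\theta\|_{\ell^\infty}^{2/\beta}X_n^{1/\alpha+1/\beta}$, which is \eqref{eq:v3-est}.

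The main obstacle is the bookkeeping in the Hölder-in-$(k,\ell)$ step. One has to arrange the exponents so that exactly $\lambda_\ell^{1-\beta}$ appears, with $\|\theta\|_{\ell^\infty}^{2/\beta}$ extracted and the gradient term raised to $1/\alpha$, and with constants matching $d$ and the factor $2$ in $\Gamma$. I would first try to reproduce the constant-exponent computation of \cite[Section~4]{DongLuoTang2023} verbatim, since only the final Case~2 step differs.
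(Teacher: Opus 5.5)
\textbf{Verdict: there is a genuine gap in the central step.} Your reduction via Jensen and the It\^o isometry, and your final Case~2 step, agree with the paper. There is one slip in the latter: \eqref{eq:modular-norm-2} together with $(a+b)^2\le2(a^2+b^2)$ gives $\norm{\nabla u}_{L^2}^2\le2\hat C_e^2(D^{2/p^-}+D^{2/p^+})$, which is the source of the factor $2$ in $\Gamma$.

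The gap is this. You integrate by parts in the whole It\^o-isometry expression and bound $\sum_i(a_{k,i}\cdot\ell)^2\le|\ell|^2$ \emph{before} any splitting. That loss cannot be undone later.
\begin{itemize}
\item Take $u$ supported on modes $|m|\le M$ with $M$ fixed, and the annular profile $\theta^{(N)}$ with $N\ge2M$. The exact coefficient is $(2\pi)^2(a_{k,i}\cdot(\ell-k))^2\le4\pi^2M^2$, since $a_{k,i}\perp k$. You replace it by $\lambda_\ell\ge\pi^2N^2$.
\item For large $N$, the quantity you reduce to is then of order $\norm{u}_{L^2}^2$, as you observe yourself. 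The true quantity is $O(N^{-2})$. No later H\"older manipulation of your expression can produce a factor vanishing with $\norm{\theta}_{\ell^\infty}$.
\item Concretely, your claim that Bessel's inequality and $\sum_k\theta_k^2=1$ turn the $\alpha$-factor into a Sobolev norm of $u$ is false. With weight $\lambda_\ell$, that factor is proportional to
\[
\sum_k\theta_k^2\sum_\ell\lambda_\ell\abs{\pair{ue_k}{e_\ell}}^2
=\norm{\nabla u}_{L^2}^2+4\pi^2\Bigl(\sum_k\theta_k^2|k|^2\Bigr)\norm{u}_{L^2}^2
\ge\norm{\nabla u}_{L^2}^2+4\pi^2N^2\norm{u}_{L^2}^2 .
\]
The reason is that the frequency shift $m\mapsto m+k$ does not preserve Sobolev weights.
\item The resulting bound behaves like $\norm{\theta^{(N)}}_{\ell^\infty}^{2/\beta}N^{2/\alpha}\sim N^{(2\beta-2-d)/\beta}$. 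This diverges precisely because $\beta>d/2+1$.
\item The pointwise inequality $\theta_k^2\le\norm{\theta}_{\ell^\infty}^{2/\beta}\theta_k^{2/\alpha}$ does not help either, since $\sum_k(\theta^{(N)}_k)^{2/\alpha}=|\mathcal K_N|^{1/\beta}\to\infty$.
\end{itemize}

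\textbf{The missing idea is to split first and integrate by parts in only one factor.}
\begin{itemize}
\item Apply H\"older with exponents $(\alpha,\beta)$ to the exact It\^o-isometry expression, jointly over $\Omega$, both time variables and $(k,i,\ell)$. Use $\lambda_\ell^{1/\alpha}\lambda_\ell^{(1-\beta)/\beta}=1$ and keep $\theta_k^2$ as a common weight. This gives $I_1^{1/\alpha}I_2^{1/\beta}$.
\item In $I_1$ (weight $\lambda_\ell$), do not integrate by parts. The kernel integral is at most $1/(2\kappa)$, and Parseval in $\ell$ returns $\norm{a_{k,i}\cdot\nabla u}_{L^2}^2$. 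Then $\sum_i\abs{a_{k,i}\cdot\nabla u}^2\le\abs{\nabla u}^2$ and $\sum_k\theta_k^2=1$ give $I_1\le d\,\mathbb E\int_{t_n}^{t_{n+1}}\norm{\nabla u}_{L^2}^2\dd\tau\le d\Gamma t_0X_n$. This is the only place Case~2 enters.
\item Only in $I_2$ (weight $\lambda_\ell^{1-\beta}$) use the integration by parts $\abs{\pair{a_{k,i}e_k\cdot\nabla u}{e_\ell}}^2\le\lambda_\ell\abs{\pair{ue_k}{e_\ell}}^2$. The kernel integral then yields $\lambda_\ell^{1-\beta}/(2\kappa)$.
\item Still in $I_2$, bound $\theta_k^2\le\norm{\theta}_{\ell^\infty}^2$ and apply Bessel in $k$ at fixed $\ell$: $\sum_k\abs{\pair{u}{e_{\ell-k}}}^2\le\norm{u}_{L^2}^2$. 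Summability of $\lambda_\ell^{1-\beta}$ gives $I_2\le d\norm{\theta}_{\ell^\infty}^2\mathscr L^\beta t_0X_n$.
\end{itemize}
Then $t_0^{-1}I_1^{1/\alpha}I_2^{1/\beta}$ is exactly the right-hand side of \eqref{eq:v3-est}.

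If you prefer to integrate by parts first, you must keep the exact identity and use $a_{k,i}\cdot\ell=a_{k,i}\cdot(\ell-k)$ in the $\alpha$-factor. That reduces to Parseval for $a_{k,i}\cdot\nabla u$ again.
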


\begin{proof}
An application of Jensen's inequality yields
\begin{equation}\label{eq:v3-jensen}
\mathbb{E}\left(
\frac1{t_0}\int_{t_n}^{t_{n+1}}\norm{v_3(r)}_{L^2}\dd r
\right)^2
\le\frac1{t_0}\int_{t_n}^{t_{n+1}}
\mathbb{E}\norm{v_3(r)}_{L^2}^2\dd r.
\end{equation}
By It\^o's isometry and Parseval's identity, we have
\begin{align}
\mathbb{E}\norm{v_3(r)}_{L^2}^2
&=\mathbb{E}\sum_{k,i}\int_{t_n}^{r}
\norm{e^{\kappa(r-\tau)\Delta}
(\xi_{k,i}\cdot\nabla u(\tau))}_{L^2}^2\dd\tau\notag\\
&=\mathbb{E}\sum_{k,i}\sum_{\ell\in\mathbb Z_0^d}
\int_{t_n}^{r}e^{-2\kappa\lambda_\ell(r-\tau)}
\abs{\pair{\xi_{k,i}\cdot\nabla u(\tau)}{e_\ell}}^2\dd\tau.
\label{eq:v3-parseval}
\end{align}
Define
\begin{align*}
I_1&:=\mathbb{E}\sum_{k,i,\ell}
\int_{t_n}^{t_{n+1}}\int_{t_n}^{r}
 e^{-2\kappa\lambda_\ell(r-\tau)}\lambda_\ell
\abs{\pair{\xi_{k,i}\cdot\nabla u(\tau)}{e_\ell}}^2
\dd\tau\dd r,
\\
I_2&:=\mathbb{E}\sum_{k,i,\ell}
\int_{t_n}^{t_{n+1}}\int_{t_n}^{r}
 e^{-2\kappa\lambda_\ell(r-\tau)}\lambda_\ell^{1-\beta}
\abs{\pair{\xi_{k,i}\cdot\nabla u(\tau)}{e_\ell}}^2
\dd\tau\dd r.
\end{align*}

We apply H\"older's inequality with exponents $\alpha,\beta$ on the product of probability, time and counting measures, first with finite Fourier sums. The weights satisfy
\[
\lambda_\ell^{1/\alpha}\lambda_\ell^{(1-\beta)/\beta}=1.
\]
Thus, by monotone convergence of the truncated sums, the integral over $r$ of the right-hand side of~\eqref{eq:v3-parseval} is at most $I_1^{1/\alpha}I_2^{1/\beta}$. With~\eqref{eq:v3-jensen}, we therefore obtain
\begin{equation}\label{eq:v3-I1I2}
\mathbb E\left(\frac1{t_0}\int_{t_n}^{t_{n+1}}\|v_3(r)\|_{L^2}\dd r\right)^2
\le\frac1{t_0}I_1^{1/\alpha}I_2^{1/\beta}.
\end{equation}

\noindent\textbf{Step 1: Estimate of $I_1$.}  By the real formulation~\eqref{eq:xi-def} and the same cosine--sine pairing used in~\eqref{eq:covariance-xi}, we obtain
\begin{align}
I_1
&=\frac{2\kappa d}{d-1}
\mathbb{E}\sum_{k\in\mathbb Z_0^d}\theta_k^2
\sum_{i=1}^{d-1}\sum_{\ell\in\mathbb Z_0^d}
\int_{t_n}^{t_{n+1}}
\abs{\pair{a_{k,i}e_k\cdot\nabla u(\tau)}{e_\ell}}^2
\int_{\tau}^{t_{n+1}}
e^{-2\kappa\lambda_\ell(r-\tau)}\lambda_\ell\dd r\dd\tau.
\label{eq:I1-rewrite}
\end{align}
Thanks to
\[
\int_{\tau}^{t_{n+1}}
e^{-2\kappa\lambda_\ell(r-\tau)}\lambda_\ell\dd r
=\frac{1-e^{-2\kappa\lambda_\ell(t_{n+1}-\tau)}}{2\kappa}
\le\frac1{2\kappa},
\]
we obtain
\begin{align}
I_1
&\le\frac d{d-1}
\mathbb{E}\sum_k\theta_k^2\sum_{i=1}^{d-1}
\int_{t_n}^{t_{n+1}}
\sum_\ell\abs{\pair{a_{k,i}e_k\cdot\nabla u(\tau)}{e_\ell}}^2
\dd\tau\notag\\
&=\frac d{d-1}
\mathbb{E}\sum_k\theta_k^2\sum_{i=1}^{d-1}
\int_{t_n}^{t_{n+1}}
\norm{a_{k,i}e_k\cdot\nabla u(\tau)}_{L^2}^2\dd\tau\notag\\
&\le d\mathbb{E}\int_{t_n}^{t_{n+1}}\norm{\nabla u(\tau)}_{L^2}^2\dd\tau.
\label{eq:I1-grad2}
\end{align}
Using~\eqref{eq:embedding-L2} and~\eqref{eq:modular-norm-2}, together with the elementary inequality $(a+b)^2\le2(a^2+b^2)$, we get
\begin{equation}\label{eq:grad2-D}
\|\nabla u(\tau)\|_{L^2}^2
\le2\hat{C}_e^2\bigl(D(\tau)^{2/p^-}+D(\tau)^{2/p^+}\bigr).
\end{equation}

For each $r\in\{p^-,p^+\}$, Jensen's inequality on $\Omega\times(t_n,t_{n+1})$ together with the small-dissipation hypothesis yields
\[
\mathbb E\int_{t_n}^{t_{n+1}}D(\tau)^{2/r}\dd\tau
\le t_0^{1-2/r}\left(\mathbb E\int_{t_n}^{t_{n+1}}D(\tau)\dd\tau\right)^{2/r}
\le t_0\mu^{2/r}X_n^{p^+/r}.
\]
The term with $r=p^+$ is linear in $X_n$, whereas
$X_n^{p^+/p^-}\le R^{2(p^+/p^- -1)}X_n$ since $X_n\le R^2$.
Combining~\eqref{eq:I1-grad2},~\eqref{eq:grad2-D} and~\eqref{eq:Gamma-def}, it holds
\begin{equation}\label{eq:I1-final}
I_1\le2d\hat{C}_e^2t_0\left(\mu^{2/p^+}
+\mu^{2/p^-}R^{2(p^+/p^- -1)}\right)X_n
=d\Gamma(\mu,R)t_0X_n.
\end{equation}

\noindent\textbf{Step 2: Estimate of $I_2$.}  Using the convention for the complex inner product fixed above and the condition $a_{k,i}\cdot k=0$, a direct integration by parts yields
\begin{align*}
\pair{a_{k,i}e_k\cdot\nabla u}{e_\ell}_{L^2}
&=\int_{\mathbb T^d}(a_{k,i}\cdot\nabla u)e_k\overline{e_\ell}\dd x\\
&=-\int_{\mathbb T^d}u\,a_{k,i}\cdot\nabla(e_k\overline{e_\ell})\dd x=2\pi i(a_{k,i}\cdot\ell)\pair{ue_k}{e_\ell}_{L^2}.
\end{align*}
Thus, we get
\begin{equation}\label{eq:integration-by-parts-fourier}
\abs{\pair{a_{k,i}e_k\cdot\nabla u}{e_\ell}}^2
=(2\pi)^2(a_{k,i}\cdot\ell)^2
\abs{\pair{ue_k}{e_\ell}}^2
\le\lambda_\ell\abs{\pair{ue_k}{e_\ell}}^2.
\end{equation}
Proceeding as in~\eqref{eq:I1-rewrite} and using~\eqref{eq:integration-by-parts-fourier}, we obtain
\begin{align*}
I_2
&\le\frac{2\kappa d}{d-1}
\mathbb{E}\sum_{k}\theta_k^2\sum_{i=1}^{d-1}\sum_\ell
\int_{t_n}^{t_{n+1}}
\abs{\pair{u(\tau)e_k}{e_\ell}}^2
\int_{\tau}^{t_{n+1}}
e^{-2\kappa\lambda_\ell(r-\tau)}
\lambda_\ell^{2-\beta}\dd r\dd\tau.
\end{align*}
Since
\[
\int_{\tau}^{t_{n+1}}
e^{-2\kappa\lambda_\ell(r-\tau)}
\lambda_\ell^{2-\beta}\dd r
\le\frac1{2\kappa}\lambda_\ell^{1-\beta},
\]
we derive
\begin{align*}
I_2
&\le d\mathbb{E}\sum_k\theta_k^2\sum_\ell\lambda_\ell^{1-\beta}
\int_{t_n}^{t_{n+1}}
\abs{\pair{u(\tau)e_k}{e_\ell}}^2\dd\tau\\
&\le d\norm{\theta}_{\ell^\infty}^2
\mathbb{E}\sum_\ell\lambda_\ell^{1-\beta}
\int_{t_n}^{t_{n+1}}
\sum_k\abs{\pair{u(\tau)e_k}{e_\ell}}^2\dd\tau.
\end{align*}
For fixed $\ell$, the standard complex $L^2$ pairing yields
\[
\pair{u(\tau)e_k}{e_\ell}
=\pair{u(\tau)}{e_{\ell-k}}.
\]
Since the map $k\mapsto\ell-k$ is injective on $\mathbb Z_0^d$, Parseval's identity yields
\begin{equation*}
\sum_{k\in\mathbb Z_0^d}\abs{\pair{u(\tau)e_k}{e_\ell}}^2
=\sum_{k\in\mathbb Z_0^d}\abs{\pair{u(\tau)}{e_{\ell-k}}}^2
\le\sum_{j\in\mathbb Z^d}\abs{\pair{u(\tau)}{e_j}}^2
=\norm{u(\tau)}_{L^2}^2.
\end{equation*}
Using the pathwise monotonicity of the $L^2$ norm, we have
\begin{align}
I_2
&\le d\norm{\theta}_{\ell^\infty}^2
\left(\sum_\ell\lambda_\ell^{1-\beta}\right)
\mathbb{E}\int_{t_n}^{t_{n+1}}\norm{u(\tau)}_{L^2}^2\dd\tau\le d\norm{\theta}_{\ell^\infty}^2\mathscr L^\beta t_0X_n.
\label{eq:I2-final}
\end{align}
Finally, substituting~\eqref{eq:I1-final} and~\eqref{eq:I2-final} into~\eqref{eq:v3-I1I2} gives
\begin{align*}
\frac1{t_0}I_1^{1/\alpha}I_2^{1/\beta}
&\le\frac1{t_0}
\bigl(d\Gamma(\mu,R)t_0X_n\bigr)^{1/\alpha}
\bigl(d\norm{\theta}_{\ell^\infty}^2
\mathscr L^\beta t_0X_n\bigr)^{1/\beta}\\
&=d\Gamma(\mu,R)^{1/\alpha}
\mathscr L\norm{\theta}_{\ell^\infty}^{2/\beta}X_n.
\end{align*}
This proves~\eqref{eq:v3-est}.
\end{proof}

\begin{proof}[Proof of Proposition~\ref{prop:one-step}]
If~\eqref{eq:case1} holds, the energy identity and~\eqref{eq:tangent} give
\begin{align}
X_{n+1}
&\le X_n\left(1-2\mu t_0X_n^{(m-2)/2}\right)\notag\\
&\le\frac{X_n}
{\left(1+\mu(m-2)t_0X_n^{(m-2)/2}\right)^{2/(m-2)}}.
\label{eq:case1-pre}
\end{align}
This case requires no smallness condition on the noise.

Suppose now that~\eqref{eq:case2} holds. The spectral gap gives
\begin{equation}\label{eq:v1-est}
\frac1{t_0}\int_{t_n}^{t_{n+1}}\|v_1(r)\|_{L^2}\dd r
\le\frac{\|u(t_n)\|_{L^2}}{4\pi^2\kappa t_0}.
\end{equation}
By~\eqref{eq:average-triangle},~\eqref{eq:v1-est} and Lemmas~\ref{lem:v2}--\ref{lem:v3}, we obtain
\begin{align}
X_{n+1}
&\le3\left[
\frac1{(4\pi^2\kappa t_0)^2}
+K_2(\kappa,t_0,R)^2
+d\Gamma(\mu,R)^{1/\alpha}
\mathscr L\norm{\theta}_{\ell^\infty}^{2/\beta}
\right]X_n\notag\\
&=\frac12\left[
C_1(\kappa,t_0,R)^{2/(m-2)}
+C_2(\theta,\mu,R)^{2/(m-2)}
\right]X_n\notag\\
&\le\max\{C_1(\kappa,t_0,R),C_2(\theta,\mu,R)\}^{2/(m-2)}X_n.
\label{eq:raw-contraction}
\end{align}
This estimate holds before imposing~\eqref{eq:smallness-one-step}. Under that condition, $X_n\le R^2$ yields
\[
X_{n+1}\le
\frac{X_n}{(1+\mu(m-2)t_0R^{m-2})^{2/(m-2)}}
\le
\frac{X_n}{(1+\mu(m-2)t_0X_n^{(m-2)/2})^{2/(m-2)}}.
\]
Thus~\eqref{eq:one-step} holds in both cases.
\end{proof}

\subsection{Choice of the noise}

\begin{proposition}\label{prop:choose-noise}
Suppose that Assumption~\ref{ass:p} holds and fix $\beta>d/2+1$. For every $t_0,R,\mu>0$, there exist $\kappa>0$ and a finitely supported annular profile $\theta$ satisfying~\eqref{eq:theta-assumptions} and~\eqref{eq:smallness-one-step}. The choice depends only on these parameters and the structural constants.
\end{proposition}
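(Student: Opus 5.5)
The two constants in \eqref{eq:smallness-one-step} separate the roles of the noise: $C_1(\kappa,t_0,R)$ depends only on the intensity $\kappa$, and $C_2(\theta,\mu,R)$ depends only on the profile $\theta$ through $\|\theta\|_{\ell^\infty}$. I would therefore choose the two independently. Set
\[
\varepsilon:=\frac1{1+\mu(m-2)t_0R^{m-2}}\in(0,1).
\]
The target is then the pair of conditions $C_1\le\varepsilon$ and $C_2\le\varepsilon$. Since $m-2=p^+-2>0$, the map $x\mapsto x^{(m-2)/2}$ is increasing and continuous on $[0,\infty)$ and vanishes at $0$. So it suffices to make the bracketed expressions in \eqref{eq:C1-def} and \eqref{eq:C2-def} smaller than $\varepsilon^{2/(m-2)}$.

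\textbf{Choice of $\kappa$.} With $t_0$ and $R$ fixed, the term $(4\pi^2\kappa t_0)^{-2}$ tends to $0$ as $\kappa\to\infty$. By \eqref{eq:K2-def}, the term $K_2(\kappa,t_0,R)$ is a fixed constant times $\kappa^{-\eta}$. Here $\eta>0$, since \eqref{eq:eta-def-intro} gives $\eta\ge\tfrac12$ because $p^+>2$, so $K_2\to0$ as well. This is the only place where Assumption~\ref{ass:p} enters, through $\eta<1$, which makes $C_0=C/(1-\eta)$ finite. Hence there is an explicit $\kappa_*=\kappa_*(t_0,R,\mu,d,p^\pm)$ with
\[
6\bigl((4\pi^2\kappa t_0)^{-2}+K_2^2\bigr)\le\varepsilon^{2/(m-2)}
\qquad\text{for all }\kappa\ge\kappa_*.
\]
Fix any such $\kappa$.

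\textbf{Choice of $\theta$.} Fix $\beta>d/2+1$ and the conjugate exponent $\alpha$. Then $\mathscr L<\infty$, because $\sum_\ell|\ell|^{2(1-\beta)}$ converges exactly when $2(\beta-1)>d$. Also $\Gamma(\mu,R)$ from \eqref{eq:Gamma-def} is a finite constant independent of $\theta$ and $\kappa$. Take the annular profiles $\theta^{(N)}$ of Corollary~\ref{cor:finite-annular}. They satisfy \eqref{eq:theta-assumptions}, have finite support, and obey $\|\theta^{(N)}\|_{\ell^\infty}=|\mathcal K_N|^{-1/2}\to0$. Since $2/\beta>0$, the quantity $6d\,\Gamma^{1/\alpha}\mathscr L\|\theta^{(N)}\|_{\ell^\infty}^{2/\beta}$ tends to $0$. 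Choose the least $N$ for which it is at most $\varepsilon^{2/(m-2)}$. That $N$ depends only on $d,\beta,\mu,R,t_0,p^\pm,\hat C_e$, and gives $C_2\le\varepsilon$.

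Combining the two choices yields \eqref{eq:smallness-one-step}. Neither $\kappa$ nor $N$ involves $u_0$, so the noise is uniform over the ball $\|u_0\|_{L^2}\le R$. No step is a real obstacle. The only points needing care are the positivity of the exponents $\eta$, $2/\beta$ and $(m-2)/2$, and the fact that the noise intensity does not appear in $C_2$, so the order of choosing $\kappa$ and $\theta$ is irrelevant. I would also note that $\theta$ does not enter $C_1$ either, since the It\^o correction is $\kappa\Delta$ for every admissible profile (Proposition~\ref{prop:correction}).
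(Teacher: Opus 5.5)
Your proposal is correct and follows essentially the same route as the paper. You take $\kappa$ large so that $C_1(\kappa,t_0,R)\le(1+\mu(m-2)t_0R^{m-2})^{-1}$, and then an annular profile $\theta^{(N)}$ from Corollary~\ref{cor:finite-annular} with $\|\theta^{(N)}\|_{\ell^\infty}$ small enough that $C_2(\theta,\mu,R)$ satisfies the same bound. You also check explicitly the positivity of the exponents $\eta$, $2/\beta$, $(m-2)/2$ and the finiteness of $\mathscr L$, which the paper leaves implicit.
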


\begin{proof}
For fixed $t_0,R,\mu$,~\eqref{eq:K2-def}--\eqref{eq:C2-def} give
\[
C_1(\kappa,t_0,R)\longrightarrow0\quad(\kappa\to\infty),
\qquad
C_2(\theta,\mu,R)\longrightarrow0\quad(\|\theta\|_{\ell^\infty}\to0).
\]
Choose $\kappa$ and then an annular profile from Corollary~\ref{cor:finite-annular} so that each quantity is at most $(1+\mu(m-2)t_0R^{m-2})^{-1}$.
\end{proof}

\subsection{Proof of averaged enhanced dissipation}

\begin{proof}[Proof of Theorem~\ref{thm:average-main}]
Choose $t_0\in(0,1)$ and $\widehat\mu$ with $\widehat\mu(1-t_0)>\mu$, and apply Proposition~\ref{prop:choose-noise} with $\widehat\mu$. Set $X_j=\mathbb E\|u(t_j)\|_{L^2}^2$. Proposition~\ref{prop:one-step} and Lemma~\ref{lem:iteration} then yield
\begin{equation}\label{eq:iteration-final-average}
X_n\le\frac{\|u_0\|_{L^2}^2}
{\bigl(1+\widehat\mu(m-2)nt_0\|u_0\|_{L^2}^{m-2}\bigr)^{2/(m-2)}}.
\end{equation}
For $t\ge1$, choose $n$ such that $t_n\le t<t_n+t_0$. Then
\[
t_n\ge t-t_0\ge(1-t_0)t,\qquad \widehat\mu t_n\ge\mu t.
\]
By monotonicity, we have $\mathbb E\|u(t)\|_{L^2}^2\le X_n$. Substituting into~\eqref{eq:iteration-final-average} and taking square roots proves~\eqref{eq:average-main}.
\end{proof}

\subsection{Strict comparison with the unperturbed evolution}\label{subsec:strict}

\begin{proof}[Proof of Theorem~\ref{thm:strict-enhancement}]
Fix $T>0$ and set $\Lambda_*=p^+M/(p^-r_*^2)$. By~\eqref{eq:def-Lambda0} and Theorem~\ref{thm:lower-bound}, we have
\[
\|u^{\mathrm{det}}(T)\|_{L^2}
\ge\frac{\|u_0\|_{L^2}}{(1+(p^--2)\Lambda_*T)^{1/(p^--2)}}
\qquad(u_0\in\mathcal S(r_*,R,M)).
\]
Choose $\mu>0$ so that
\[
(1+(p^+-2)\mu T r_*^{p^+-2})^{1/(p^+-2)}
\ge\lambda^{-1}(1+(p^--2)\Lambda_*T)^{1/(p^--2)}.
\]
Apply Proposition~\ref{prop:choose-noise} with $t_0=T$. Proposition~\ref{prop:one-step} at $n=0$ then gives
\[
\|u(T)\|_{L^2(\Omega;L^2)}
\le\frac{\|u_0\|_{L^2}}
{(1+(p^+-2)\mu T\|u_0\|_{L^2}^{p^+-2})^{1/(p^+-2)}}.
\]
Since $\|u_0\|_{L^2}\ge r_*$, we obtain
\begin{align*}
\|u(T)\|_{L^2(\Omega;L^2)}
&\le\frac{\|u_0\|_{L^2}}{(1+(p^+-2)\mu T r_*^{p^+-2})^{1/(p^+-2)}}\\
&\le\lambda\frac{\|u_0\|_{L^2}}{(1+(p^--2)\Lambda_*T)^{1/(p^--2)}}
\le\lambda\|u^{\mathrm{det}}(T)\|_{L^2}.
\end{align*}
The noise depends only on the prescribed $T,\lambda,r_*,R,M$ and the structural data.
\end{proof}

\begin{remark}\label{rem:why-lower-bound}
The lower bound in Theorem~\ref{thm:lower-bound} is needed to compare the two evolutions. The estimate $\Lambda_0(u_0)\le p^+M/(p^-r_*^2)$ uses both the gradient modular bound and the lower cutoff on $\|u_0\|_{L^2}$. Since $\Lambda_0$ is unbounded on $L^2$-balls, this argument does not yield the same uniform comparison from an $L^2$ bound alone.
\end{remark}

\section{Exponential decay and pathwise dissipation enhancement}\label{sec:pathwise}

Fix $\kappa>0$ and let $\theta^{(n)}=(\theta_{k,n})_{k\in\mathbb Z_0^d}$ satisfy~\eqref{eq:theta-assumptions}. We use the deterministic stepwise profiles

\begin{equation}\label{eq:theta-step}
\theta_k(t):=\theta_{k,n},
\qquad t\in[n,n+1).
\end{equation}
Define $\xi_{k,i}(x,t)$ by~\eqref{eq:xi-def} with $\theta_k$ replaced by $\theta_k(t)$. On every interval $[n,n+1)$, the correction remains $\kappa\Delta$, since the profile is normalized and isotropic. Thus the equation reads
\begin{equation}\label{eq:time-dependent-spde}
\dd u=\Dpx u\dd t+\kappa\Delta u\dd t
+\sum_{k,i}\xi_{k,i}(x,t)\cdot\nabla u\dd B_t^{k,i}.
\end{equation}

\begin{proposition}\label{prop:stepwise-wp}
Let $p$ be periodic and log-H\"older continuous, with $2<p^-\le p^+<\infty$. Let $u_0\in L^2(\Omega,\mathcal F_0,\mathbb P;H)$, and let the deterministic profiles $\{\theta^{(n)}\}_{n\ge0}$ satisfy~\eqref{eq:theta-assumptions}. With $\theta(t)$ defined by~\eqref{eq:theta-step}, equation~\eqref{eq:time-dependent-spde} has a unique global variational solution
$u\in L^2(\Omega;C([0,T];H))$
for every $T>0$. Almost surely, for all $0\le s\le t$,
\begin{equation}\label{eq:stepwise-energy}
\norm{u(t)}_H^2
+2\int_s^t\rho_{p(\cdot)}(\nabla u(r))\dd r
=\norm{u(s)}_H^2.
\end{equation}
If, in addition, $p^+<p_{\mathrm c}$, the shifted mild formula of Theorem~\ref{thm:mild} holds on each $[n,n+1]$ with profile $\theta^{(n)}$.
\end{proposition}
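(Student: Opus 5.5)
The plan is to build the solution by concatenation over the unit intervals $[n,n+1)$, applying Theorem~\ref{thm:wellposed} (and its proof) on each interval with the fixed profile $\theta^{(n)}$. On $[n,n+1)$ the coefficients $\xi_{k,i}(\cdot,t)$ are those of~\eqref{eq:xi-def} with the time-independent profile $\theta^{(n)}$. By Proposition~\ref{prop:correction} and Corollary~\ref{cor:variational-correction}, the It\^o correction is exactly $\kappa\Delta$ for every $n$. The bound~\eqref{eq:xi-linfty-sum} holds with the same constant $4d\kappa$, since each $\theta^{(n)}$ is normalized.

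The construction is inductive. The Brownian motions $B^{k,i}$ are the same on all intervals, so the shifted processes $B^{k,i}_{n+\cdot}-B^{k,i}_n$ are Brownian motions with respect to the shifted filtration $\{\mathcal F_{n+t}\}_{t\ge0}$, which still satisfies the usual conditions. Suppose $u$ has been constructed on $[0,n]$ with $u(n)\in L^2(\Omega,\mathcal F_n,\mathbb P;H)$ and $\mathbb E\|u(n)\|_H^2\le\mathbb E\|u_0\|_H^2$. Then Theorem~\ref{thm:wellposed}, applied on the shifted space with initial datum $u(n)$ and profile $\theta^{(n)}$, gives a unique variational solution on $[n,n+1]$. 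Its paths lie in $C([n,n+1];H)$ and satisfy the energy identity~\eqref{eq:energy-identity} from time $n$.

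The theorem is stated for $\mathcal F_0$-measurable data and a time-independent profile, so one must check two things. First, its proof uses only square integrability and measurability of the datum with respect to the initial $\sigma$-field, plus the structural bounds, so it transfers verbatim. Second, the shifted solution is adapted to the original filtration and its stochastic integrals against $\dd B^{k,i}_{n+t}$ coincide with integrals against $\dd B^{k,i}$ on $[n,n+1]$. Gluing the pieces gives a process that is $H$-continuous on $[0,T]$, since the endpoint values match, and that satisfies~\eqref{eq:variational-main} with time-dependent $\xi$ on every $[0,t]$. This follows by summing the interval identities, the stochastic integral being additive over adjacent intervals. Predictability of $u$ and $\nabla u$ follows by concatenating predictable pieces, and the moment bound $\mathbb E\sup_{[0,T]}\|u\|_H^2\le\mathbb E\|u_0\|_H^2$ follows from pathwise monotonicity.

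The energy identity~\eqref{eq:stepwise-energy} for arbitrary $0\le s\le t$ comes from adding the interval identities at the grid points between $s$ and $t$, and subtracting the identity at $s$ within its interval. This holds on a countable intersection of full-probability sets, so on one common null-set complement. For uniqueness, let $v$ be another global variational solution. Its restriction to $[0,1]$ is a solution in the sense of Definition~\ref{def:variational} with profile $\theta^{(0)}$, so it equals $u$ by pathwise uniqueness in Theorem~\ref{thm:wellposed}. Inductively, both restrictions to $[n,n+1]$ solve the same shifted problem with the same datum $u(n)=v(n)$, so they agree there. Alternatively, Proposition~\ref{prop:variable-Ito} applied to $z=u-v$ gives $\|z(t)\|_H\le\|z(0)\|_H$ directly, since the cancellation~\eqref{eq:main-weak-monotonicity} holds interval by interval.

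Under the additional assumption $p^+<p_{\mathrm c}$, the shifted mild formula on $[n,n+1]$ is Theorem~\ref{thm:mild} applied to the restricted solution with profile $\theta^{(n)}$ and deterministic starting time $s\in[n,n+1]$. The main obstacle is the time shift: verifying that Theorem~\ref{thm:wellposed} applies to $\mathcal F_n$-measurable data with the shifted Brownian motions, and that adaptedness and the stochastic integrals are preserved when the pieces are glued. I would handle this by writing the shifted filtration explicitly and observing that the Galerkin construction only uses independence of Brownian increments from the initial $\sigma$-field.
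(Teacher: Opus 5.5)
Your proposal is correct and follows essentially the paper's own argument. The paper applies Theorem~\ref{thm:wellposed} on each $[n,n+1]$ with the shifted filtration $\mathcal F_{n+r}$, the increments $B^{k,i}_{n+r}-B^{k,i}_n$ and the profile $\theta^{(n)}$, concatenates using uniqueness, sums the interval energy identities, and applies Theorem~\ref{thm:mild} on each interval with a countable intersection of full-probability sets. One simplification: no ``verbatim transfer'' of the proof is needed, because $u(n)$ is measurable with respect to the initial $\sigma$-field $\mathcal F_n$ of the shifted filtered space, so Theorem~\ref{thm:wellposed} applies exactly as stated.
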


\begin{proof}
Given $u(n)\in L^2(\Omega,\mathcal F_n;H)$, apply Theorem~\ref{thm:wellposed} with filtration $\mathcal F_{n+r}$, Wiener increments $B_{n+r}^{k,i}-B_n^{k,i}$ and profile $\theta^{(n)}$. Concatenation and uniqueness give the global solution, and summing the interval energy identities gives~\eqref{eq:stepwise-energy}. Under the additional upper bound on $p^+$, Theorem~\ref{thm:mild} gives the shifted representation, including the right endpoint by $H$-continuity. A countable intersection makes the assertions simultaneous on all intervals.
\end{proof}

\begin{proposition}\label{prop:restart-one-step}
Suppose that Assumption~\ref{ass:p} holds. Fix $s\ge0$, $t_0,R,\mu>0$, and let $v$ be an $\mathcal F_s$-measurable $H$-valued random variable with $\|v\|_H\le R$ almost surely. On $[s,s+t_0]$, solve~\eqref{eq:ito-main} with $u(s)=v$, a deterministic profile satisfying~\eqref{eq:theta-assumptions}, and the shifted Wiener increments. Set
\[
X_s:=\mathbb E\|v\|_H^2,\qquad
\mathcal D_s:=\mathbb E\int_s^{s+t_0}\rho_{p(\cdot)}(\nabla u(r))\dd r.
\]
The large-dissipation estimate~\eqref{eq:case1-pre} and the small-dissipation estimate~\eqref{eq:raw-contraction} hold with $X_n,\mathcal D_n$ replaced by $X_s,\mathcal D_s$. In particular, if~\eqref{eq:smallness-one-step} holds, then, with $m=p^+$,
\begin{equation}\label{eq:restart-one-step}
\mathbb E\|u(s+t_0)\|_H^2
\le\frac{X_s}
{\left(1+\mu(m-2)t_0X_s^{(m-2)/2}\right)^{2/(m-2)}}.
\end{equation}
\end{proposition}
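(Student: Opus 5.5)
The proof is a time shift of the argument for Proposition~\ref{prop:one-step}. The only new ingredients are that the initial datum $v$ is random and $\mathcal F_s$-measurable, and that we use the shifted filtration $\mathcal G_r:=\mathcal F_{s+r}$ with Brownian motions $\widetilde B^{k,i}_r:=B^{k,i}_{s+r}-B^{k,i}_s$. By the strong Markov/independent-increment property these are standard real Brownian motions with respect to $\mathcal G_r$, and $v$ is $\mathcal G_0$-measurable with $\mathbb E\|v\|_H^2\le R^2$. Theorem~\ref{thm:wellposed} then gives a unique variational solution $w(r)=u(s+r)$ on $[0,t_0]$ with the pathwise energy identity. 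Theorem~\ref{thm:mild}, applied at the deterministic starting time $0$, gives the decomposition $w=v_1+v_2+v_3$ on $[0,t_0]$. Here $v_1(r)=S(\kappa r)v$, $v_2$ is the nonlinear convolution, and $v_3$ is the stochastic convolution driven by $\widetilde B$. Pathwise monotonicity of $\|w(r)\|_H$ yields \eqref{eq:average-triangle} with $t_n$ replaced by $0$.

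\textbf{Large dissipation.} Suppose $\mathcal D_s\ge\mu t_0X_s^{m/2}$. Taking expectations in the energy identity gives $\mathbb E\|w(t_0)\|_H^2=X_s-2\mathcal D_s\le X_s(1-2\mu t_0X_s^{(m-2)/2})$. Then \eqref{eq:tangent} gives \eqref{eq:case1-pre} verbatim.

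\textbf{Small dissipation.} Suppose instead $\mathcal D_s<\mu t_0X_s^{m/2}$. The spectral-gap bound \eqref{eq:v1-est} for $v_1$ is pathwise and holds unchanged. Lemma~\ref{lem:v2} is also pathwise. It uses only the energy identity on the interval and the almost sure bound $\|w(0)\|_H=\|v\|_H\le R$, so it transfers directly. This is where the almost sure bound on $v$, rather than a bound on $X_s$ alone, is used.

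For $v_3$, I would redo Lemma~\ref{lem:v3}. The It\^o isometry for the shifted Brownian motions yields \eqref{eq:v3-parseval}, and the H\"older splitting into $I_1,I_2$ is unchanged. In Step~1, Jensen on $\Omega\times(0,t_0)$ together with the small-dissipation hypothesis gives $t_0\mu^{2/r}X_s^{p^+/r}$. We then need $X_s^{p^+/p^-}\le R^{2(p^+/p^--1)}X_s$, which holds since $X_s\le R^2$. Step~2 uses Parseval and pathwise monotonicity, giving $\mathbb E\int\|w\|^2\le t_0X_s$.

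Combining the three terms via $(a+b+c)^2\le3(a^2+b^2+c^2)$ reproduces \eqref{eq:raw-contraction} with $X_s$ in place of $X_n$. Under \eqref{eq:smallness-one-step}, the final chain uses $X_s\le R^2$ to conclude \eqref{eq:restart-one-step}.

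\textbf{Main obstacle.} The delicate point is checking that the well-posedness and mild formulation carry over to the random $\mathcal F_s$-measurable initial datum with the shifted noise. Theorem~\ref{thm:wellposed} already allows $u_0\in L^2(\Omega,\mathcal F_0,\mathbb P;H)$, so it suffices to verify that the shifted filtration satisfies the usual conditions and that the shifted increments are independent of $\mathcal G_0$. Once this is settled, every estimate above is either pathwise or an expectation identity, and no conditioning is needed.
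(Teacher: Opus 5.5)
Your proposal is correct and follows the paper's argument, which is exactly this time shift. The paper only observes that the shifted energy identity gives $\|u(s+r)\|_H\le\|v\|_H\le R$, and that the proofs of Lemmas~\ref{lem:v2}--\ref{lem:v3} and Proposition~\ref{prop:one-step} use only this bound, predictability and It\^o's isometry. Your added details are accurate: the shifted filtration and Brownian increments (as in Proposition~\ref{prop:stepwise-wp}), and the observation that Lemma~\ref{lem:v2} needs the almost sure bound on $v$ while Lemma~\ref{lem:v3} needs only $X_s\le R^2$.
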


\begin{proof}
The shifted energy identity gives $\|u(s+r)\|_H\le\|v\|_H\le R$. The proofs of Lemmas~\ref{lem:v2}--\ref{lem:v3} and Proposition~\ref{prop:one-step} use only this bound, predictability and It\^o's isometry, so they apply with the same constants to the random datum $v$.
\end{proof}

Following the intervalwise acceleration in~\cite[Proposition~5.1]{DongLuoTang2023}, fix $\gamma>2$ and set $\mu_n:=\gamma^n\mu_0$.

\begin{proposition}\label{prop:path-discrete}
Suppose that Assumption~\ref{ass:p} holds. Set $m=p^+$ and fix $R>0$, $\mu_0>0$ and $\gamma>2$.  There exist a fixed $\kappa>0$ and deterministic finitely supported profiles $\{\theta^{(n)}\}_{n\ge0}$, each satisfying~\eqref{eq:theta-assumptions} and depending only on $R$, $\mu_0$, $\gamma$, $d$, the fixed parameter $\beta$, and the exponent data, such that for every zero-mean $u_0\in L^2(\mathbb T^d)$ with $\norm{u_0}_{L^2}\le R$,
\begin{equation}\label{eq:path-one-step}
\mathbb{E}\norm{u(n)}_{L^2}^2
\le
\frac{\mathbb{E}\norm{u(n-1)}_{L^2}^2}
{\left(1+\mu_0(m-2)\gamma^{n-1}
\bigl(\mathbb{E}\norm{u(n-1)}_{L^2}^2\bigr)^{(m-2)/2}
\right)^{2/(m-2)}}
\end{equation}
for every $n\ge1$.  Therefore,
\begin{equation}\label{eq:path-discrete-sum}
\mathbb{E}\norm{u(n)}_{L^2}^2
\le
\frac{\norm{u_0}_{L^2}^2}
{\left(1+\mu_0(m-2)\dfrac{\gamma^n-1}{\gamma-1}
\norm{u_0}_{L^2}^{m-2}\right)^{2/(m-2)}}.
\end{equation}
\end{proposition}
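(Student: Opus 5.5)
The plan is to apply Proposition~\ref{prop:restart-one-step} on each interval $[n-1,n]$ with $t_0=1$, $s=n-1$ and coefficient $\mu_{n-1}=\gamma^{n-1}\mu_0$. The random datum is $v=u(n-1)$, which is $\mathcal F_{n-1}$-measurable. By the stepwise energy identity~\eqref{eq:stepwise-energy} it satisfies $\|v\|_H\le\|u_0\|_{L^2}\le R$ almost surely, so the a priori radius $R$ is available on every interval. Proposition~\ref{prop:stepwise-wp} provides the concatenated solution and the shifted mild formula on each $[n-1,n]$ with profile $\theta^{(n-1)}$. Once the smallness condition~\eqref{eq:smallness-one-step} holds with $(t_0,\mu)=(1,\mu_{n-1})$ for the pair $(\kappa,\theta^{(n-1)})$, estimate~\eqref{eq:restart-one-step} is exactly~\eqref{eq:path-one-step}.

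The key point is choosing a single $\kappa$ valid for all $n$. Condition~\eqref{eq:smallness-one-step} requires
\[
\max\{C_1(\kappa,1,R),C_2(\theta^{(n-1)},\mu_{n-1},R)\}\le\bigl(1+\mu_{n-1}(m-2)R^{m-2}\bigr)^{-1}.
\]
The right-hand side decays to $0$ as $n\to\infty$, whereas $C_1(\kappa,1,R)$ does not depend on $n$. A fixed $\kappa$ therefore cannot satisfy the condition through the $C_1$ term for all $n$. This is the main obstacle.

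To overcome it, I would revisit the case split in the proof of Proposition~\ref{prop:one-step}. The small-dissipation estimate~\eqref{eq:raw-contraction} in fact only needs the weaker contraction
\[
X_{n}\le\bigl(1+\mu_{n-1}(m-2)X_{n-1}^{(m-2)/2}\bigr)^{-2/(m-2)}X_{n-1}.
\]
I would try the following observation. For large $\mu_{n-1}$ relative to $R$, Case~1 becomes vacuous or trivial: the energy identity gives $\mathcal D\le X_{n-1}/2$, so Case~1 forces $\mu_{n-1}X_{n-1}^{(m-2)/2}\le 1/2$. In Case~2 the bound $C_1$ must be beaten.

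A cleaner route, following~\cite[Proposition~5.1]{DongLuoTang2023}, is to subdivide $[n-1,n]$ into $N_n$ subintervals of length $t_0^{(n)}=1/N_n$. Each subinterval uses the target coefficient $\widehat\mu_n$ slightly larger than $\mu_{n-1}$, and the factors are iterated via Lemma~\ref{lem:iteration}. However, $C_1$ grows as $t_0$ shrinks, so this also fails at fixed $\kappa$.

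I therefore expect the intended argument to be different. One would use $K_2\to0$ and the bound $(4\pi^2\kappa)^{-2}$ once, for a fixed $\kappa$ making $C_1(\kappa,1,R)\le\epsilon$ with $\epsilon$ fixed. The growth $\gamma^{n}$ would then be absorbed in the contraction form~\eqref{eq:raw-contraction} only through $C_2$. This requires showing that the actual needed smallness is
\[
\max\{C_1,C_2\}^{2/(m-2)}\le\bigl(1+\mu_{n-1}(m-2)X_{n-1}^{(m-2)/2}\bigr)^{-2/(m-2)}.
\]
Combined with the Case~1 constraint, this threshold can be met once the geometric factor is exploited. I would first verify whether a fixed $\epsilon<1$ suffices when $\mu_{n-1}X_{n-1}^{(m-2)/2}$ is bounded by the Case~1 dichotomy. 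Failing that, I would accept $\kappa$ depending on $\mu_0,\gamma$ only through a careful application of~\eqref{eq:raw-contraction} to the reduced effective radius: by the inductive bound, $X_{n-1}^{(m-2)/2}\lesssim\gamma^{-(n-1)}/\mu_0$, so $\mu_{n-1}X_{n-1}^{(m-2)/2}$ stays bounded uniformly in $n$. This uniform boundedness makes a single $\kappa$ adequate.

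With the right-hand side then bounded below uniformly in $n$, $\kappa$ is fixed once. I would then pick $\theta^{(n-1)}$ from Corollary~\ref{cor:finite-annular} with $\|\theta^{(n-1)}\|_{\ell^\infty}$ small enough that $C_2(\theta^{(n-1)},\mu_{n-1},R)$ meets the threshold, using that $\Gamma(\mu_{n-1},R)$ grows polynomially in $\gamma^n$. Finally,~\eqref{eq:path-discrete-sum} follows from~\eqref{eq:path-one-step} by induction using Lemma~\ref{lem:iteration} with $h=(m-2)/2$, applied to $X_j$ with the coefficients $\mu_0(m-2)\gamma^{j}$. The exponents $\mu_0\gamma^{j}$ add up to $\mu_0(\gamma^n-1)/(\gamma-1)$.
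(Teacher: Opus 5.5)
Your final route is correct and is essentially the paper's argument. The paper fixes $\kappa$ once and chooses $\theta^{(n-1)}$ according to $\mu_{n-1}=\gamma^{n-1}\mu_0$. It uses the energy identity in the large-dissipation case and the raw contraction~\eqref{eq:raw-contraction} in the small-dissipation case. There the inductive sum bound gives $\mu_{n-1}(m-2)X_{n-1}^{(m-2)/2}\le\gamma$ for $n\ge2$, while $X_0\le R^2$ handles $n=1$. Hence the single threshold $c_*=\min\{(1+\mu_0(m-2)R^{m-2})^{-1},(\gamma+1)^{-1}\}$ on both $C_1(\kappa,1,R)$ and $C_2(\theta^{(n-1)},\mu_{n-1},R)$ suffices. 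The paper phrases this step as $Y_N^{-h}-Y_{N-1}^{-h}\ge(c_*^{-1}-1)Y_{N-1}^{-h}\ge\mu_0(m-2)\gamma^{N-1}$ with $h=(m-2)/2$.

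You can drop your exploratory detours. Subdivision fails, as you note, and the Case~1 dichotomy gives no bound on $\mu_{n-1}X_{n-1}^{(m-2)/2}$ in Case~2.
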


\begin{proof}
Choose $\kappa$ and then each annular profile $\theta^{(n)}$ so that
\begin{align}
C_1&(\kappa,1,R)\le c_*,\label{eq:C1-path-choice}\\
C_2&(\theta^{(n)},\mu_n,R)\le c_*,\label{eq:C2-path-choice}
\end{align}
where $c_*:=\min\left\{(1+\mu_0(m-2)R^{m-2})^{-1},(\gamma+1)^{-1}\right\}$.
Since $C_1(\kappa,1,R)$ is independent of $\mu_n$ and tends to zero as $\kappa\to\infty$, one fixed $\kappa$ satisfies~\eqref{eq:C1-path-choice}. For each $n$, Corollary~\ref{cor:finite-annular} then gives~\eqref{eq:C2-path-choice} by decreasing $\|\theta^{(n)}\|_{\ell^\infty}$.

 Setting $Y_j=\mathbb E\|u(j)\|_{L^2}^2$ and $h=(m-2)/2$, we see that the restarted one-step estimate establishes both assertions for $n=1$. Suppose the sum estimate holds at $N-1\ge1$. The case $Y_N=0$ is immediate; otherwise $Y_{N-1}>0$ and
\begin{equation}\label{eq:inverse-induction}
Y_{N-1}^{-h}\ge\|u_0\|_{L^2}^{-(m-2)}
+\mu_0(m-2)\frac{\gamma^{N-1}-1}{\gamma-1}.
\end{equation}
Restart on $[N-1,N]$ with threshold $\mu_{N-1}$. In the large-dissipation case,~\eqref{eq:case1-pre} and Proposition~\ref{prop:restart-one-step} give
\[
Y_N^{-h}-Y_{N-1}^{-h}\ge\mu_0(m-2)\gamma^{N-1}.
\]
In the small-dissipation case,~\eqref{eq:raw-contraction} gives
\begin{equation*}
Y_N\le c_*^{1/h}Y_{N-1}.
\end{equation*}
Consequently, by~\eqref{eq:inverse-induction},
\begin{align*}
Y_N^{-h}-Y_{N-1}^{-h}
&\ge(c_*^{-1}-1)Y_{N-1}^{-h}\\
&\ge\gamma\mu_0(m-2)\frac{\gamma^{N-1}-1}{\gamma-1}
\ge\mu_0(m-2)\gamma^{N-1},
\end{align*}
where the last inequality uses $N\ge2$. This proves~\eqref{eq:path-one-step}. Adding the inverse increment to~\eqref{eq:inverse-induction} gives~\eqref{eq:path-discrete-sum}.
\end{proof}

\begin{remark}\label{rem:frequency-growth}
	Keep $R,\mu_0,\gamma$ and the structural data fixed. For $\mu_n\ge1$,~\eqref{eq:Gamma-def} gives $\Gamma(\mu_n,R)\le C\mu_n^{2/p^-}$. Since $\beta/\alpha=\beta-1$, a sufficient condition for~\eqref{eq:C2-path-choice} is
	\[
	\|\theta^{(n)}\|_{\ell^\infty}
	\le c\,\mu_n^{-(\beta-1)/p^-},
	\]
	with $c>0$ independent of $n$. For the annular profiles of Corollary~\ref{cor:finite-annular}, one has $|\mathcal K_N|\asymp N^d$, so it suffices to choose integer radii
	\[
	N_n\ge C\max\{1,\mu_n^{2(\beta-1)/(dp^-)}\}.
	\]
	The finitely many indices with $\mu_n<1$ can be treated separately. With $N_n\to\infty$, these profiles satisfy
	$\sum_{k,i}\|\nabla\xi_{k,i}(\cdot,t)\|_{L^\infty}^2\asymp\kappa N_n^2$ on $[n,n+1)$. Thus the intensity and the pointwise covariance $\sum_{k,i}\xi_{k,i}(x,t)\otimes\xi_{k,i}(x,t)=2\kappa I_d$ remain constant, while the frequency range and the spatial derivatives grow. The displayed scale is sufficient for the proof; no optimality is claimed.
\end{remark}

\begin{proof}[Proof of Theorem~\ref{thm:exponential-main}]
Set $m=p^+$, let $r=\max\{2,q\}$ and choose $\gamma>2$ with $\lambda:=\log\gamma/(m-2)>cr/2$.
Apply Proposition~\ref{prop:path-discrete} with $\mu_0=1$ and this $\gamma$. Since $(\gamma^n-1)/(\gamma-1)\ge\gamma^{n-1}$,~\eqref{eq:path-discrete-sum} gives, for $n\ge1$ and $u_0\ne0$,
\begin{equation*}
\mathbb E\|u(n)\|_{L^2}^2
\le(m-2)^{-2/(m-2)}\gamma^{-2(n-1)/(m-2)}
=\mathscr A e^{-2\lambda n},
\quad \mathscr A=(m-2)^{-2/(m-2)}e^{2\lambda}.
\end{equation*}
For $u_0=0$, the solution is zero. If $n\le t<n+1$, monotonicity gives $\|u(t)\|_{L^2}\le\|u(n)\|_{L^2}$. Thus~\eqref{eq:exponential-mean-main} holds with
\[
\widehat C=e^{2\lambda}\max\{R^2,\mathscr A\},
\]
since $\lambda>c$ and the initial interval is bounded by $R^2$.

Define the measurable random variable
\begin{equation*}
K(u_0,\omega):=\max\left\{1,e^c\sup_{n\ge0}e^{cn}\|u(n,\omega)\|_{L^2}\right\}.
\end{equation*}
Using $\|u(n)\|_{L^2}\le R$ and $r\ge2$, we obtain
\begin{align*}
\mathbb E K^r
&\le1+e^{cr}\sum_{n\ge0}e^{crn}\mathbb E\|u(n)\|_{L^2}^r\\
&\le1+e^{cr}R^{r-2}
\left[R^2+\mathscr A\sum_{n\ge1}e^{-(2\lambda-cr)n}\right]<\infty.
\end{align*}
The bound is uniform on $\|u_0\|_{L^2}\le R$. In particular, $K$ is almost surely finite, and Jensen's inequality yields $\mathbb E K^q\le(\mathbb E K^r)^{q/r}$. Finally, for $n\le t<n+1$,
\[
e^{ct}\|u(t)\|_{L^2}\le e^ce^{cn}\|u(n)\|_{L^2}\le K,
\]
which proves~\eqref{eq:exponential-path-main} and~\eqref{eq:exponential-moment-main} on a common full-probability set for all $t\ge0$. 
\end{proof}

The exponential estimate also yields the following pathwise algebraic bound.
Throughout the pathwise argument, we use the convention
\begin{equation}\label{eq:zero-exponential-convention}
\exp\left(-\frac1{0^+}\right):=0.
\end{equation}
With this convention,~\eqref{eq:path-main} remains valid if a trajectory reaches the zero state.

\begin{corollary}[Pathwise enhancement]\label{cor:path-main}
Suppose that Assumption~\ref{ass:p} holds. Fix $R>0$ and set $m=p^+$. Given $\mu_0,q>0$, there exist a fixed intensity $\kappa>0$ and a deterministic stepwise profile $\theta(t)$, finitely supported on each interval and satisfying~\eqref{eq:theta-assumptions}, with the following property. For every $u_0\in H$ with $0<\|u_0\|_{L^2}\le R$, the solution of~\eqref{eq:time-dependent-spde} satisfies
\begin{equation}\label{eq:path-main}
\exp\left(-\frac1{\norm{u(t)}_{L^2}^{m-2}}\right)
\le C(u_0,\omega)e^{-\mu_0(m-2)t}
\exp\left(-\frac1{\norm{u_0}_{L^2}^{m-2}}\right)
\end{equation}
for all $t\ge0$, almost surely, where $C=C(u_0,\omega)>0$ and $\mathbb E[C^q]<\infty$. Consequently, whenever
\[
t>\frac{\log C(u_0,\omega)}{\mu_0(m-2)},
\]
one has
\begin{equation}\label{eq:path-decay-main}
\norm{u(t)}_{L^2}
\le
\frac{\norm{u_0}_{L^2}}
{\left(1+\bigl(\mu_0(m-2)t-\log C(u_0,\omega)\bigr)
\norm{u_0}_{L^2}^{m-2}\right)^{1/(m-2)}}.
\end{equation}
For every $r_*\in(0,R]$, the same noise satisfies
\begin{equation}\label{eq:path-uniform-annulus}
\sup_{\substack{u_0\in H\\r_*\le\|u_0\|_{L^2}\le R}}
\mathbb{E}\bigl[C(u_0,\cdot)^q\bigr]<\infty.
\end{equation}
\end{corollary}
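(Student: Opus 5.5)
We will reduce Corollary~\ref{cor:path-main} to the exponential machinery of Proposition~\ref{prop:path-discrete} and the proof of Theorem~\ref{thm:exponential-main}. The plan is to transfer the algebraic-in-$L^2$ rate into an exponential rate for the quantity $\exp(-\|u\|^{-(m-2)})$, which is monotone in $\|u\|_{L^2}$. Take $m=p^+$ and $r=\max\{2,q\}$. Choose $\gamma>2$ large enough that $\lambda:=\log\gamma/(m-2)$ satisfies $\log\gamma>\mu_0(m-2)r$ with a margin. Then apply Proposition~\ref{prop:path-discrete} with parameters $(R,\mu_0=1,\gamma)$. This fixes $\kappa$ and the stepwise annular profiles before the datum is chosen.

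\textbf{Key elementary bound.} For $a>0$ and $x>0$ we have $e^{-1/x^{h'}}\le C_a x^{a}$ with $h'=m-2$. We will take $a=2/r$, so that
\[
\mathbb E\exp\!\Bigl(-\frac{r}{\|u(n)\|_{L^2}^{m-2}}\Bigr)\le C\,\mathbb E\|u(n)\|_{L^2}^{2}\le C\mathscr A e^{-2\lambda n},
\]
where the last step uses the bound established in the proof of Theorem~\ref{thm:exponential-main}. This decay is uniform in $u_0$.

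\textbf{Definition of the prefactor.} With the convention \eqref{eq:zero-exponential-convention}, set
\[
C(u_0,\omega):=e^{\mu_0(m-2)}\exp\!\bigl(\|u_0\|_{L^2}^{-(m-2)}\bigr)\sup_{n\ge0}e^{\mu_0(m-2)n}\exp\!\bigl(-\|u(n)\|_{L^2}^{-(m-2)}\bigr)+1.
\]

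\textbf{Pathwise estimate \eqref{eq:path-main}.} For $n\le t<n+1$, monotonicity of $\|u(t)\|_{L^2}$ gives $\exp(-\|u(t)\|^{-(m-2)})\le\exp(-\|u(n)\|^{-(m-2)})$, and $e^{-\mu_0(m-2)t}\ge e^{-\mu_0(m-2)}e^{-\mu_0(m-2)n}$. Together these yield \eqref{eq:path-main} on a common full-probability set.

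\textbf{Moments.} Bound $\sup_n$ by $\sum_n$ and use $\mathbb E(\cdot)^r$ with $r\ge1$. This gives
\[
\mathbb E C^r\lesssim 1+e^{r\|u_0\|^{-(m-2)}}\sum_n e^{r\mu_0(m-2)n}e^{-2\lambda n},
\]
and the series converges since $2\lambda>r\mu_0(m-2)$. Jensen's inequality then handles $q<r$. The factor $e^{r\|u_0\|^{-(m-2)}}$ is the only $u_0$-dependence; it is bounded by $e^{r r_*^{-(m-2)}}$ on the shell, which gives \eqref{eq:path-uniform-annulus}. It blows up as $\|u_0\|\to0$, and this is why uniformity is claimed only on shells.

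\textbf{Consequence \eqref{eq:path-decay-main}.} Take logarithms in \eqref{eq:path-main}:
\[
\|u(t)\|^{-(m-2)}\ge\|u_0\|^{-(m-2)}+\mu_0(m-2)t-\log C.
\]
When $t>\log C/(\mu_0(m-2))$ the right side exceeds $\|u_0\|^{-(m-2)}$. Inverting, and multiplying numerator and denominator by $\|u_0\|^{m-2}$, gives \eqref{eq:path-decay-main}. If $u(t)=0$, the claim is trivial.

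The main obstacle is choosing $\gamma$ so that the summability in the moment bound closes. Here we must confirm that the constants in Proposition~\ref{prop:path-discrete} do not degrade as $\gamma$ grows. They do not, since only the profiles depend on $\gamma$ and the bound $\mathscr A$ is explicit.
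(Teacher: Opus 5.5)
Your argument is correct. It uses the same ingredients as the paper:
\begin{itemize}
\item the uniform grid-time decay $\mathbb E\|u(n)\|_{L^2}^2\le\mathscr A e^{-2\lambda n}$ from Proposition~\ref{prop:path-discrete} with $\mu_0=1$;
\item monotonicity of $\|u(t)\|_{L^2}$ on $[n,n+1)$;
\item a sup-to-sum moment bound;
\item an elementary comparison of $\exp(-\|u\|_{L^2}^{-(m-2)})$ with a power of $\|u\|_{L^2}$.
\end{itemize}

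The difference is in the packaging. You rerun the moment summation for the transformed functional $\exp(-\|u(n)\|_{L^2}^{-(m-2)})$, applying the comparison inside the expectation to reach second moments. The paper instead applies Theorem~\ref{thm:exponential-main} as a black box, with rate $c=\mu_0$ and moment order $q(m-2)$. It then uses $\exp(-\|u(t)\|_{L^2}^{-(m-2)})\le e^{-1}\|u(t)\|_{L^2}^{m-2}$ pathwise (from $ye^{-y}\le e^{-1}$) and sets $C=\exp(\|u_0\|_{L^2}^{-(m-2)}-1)K^{m-2}$. The shell bound then follows in one line from \eqref{eq:exponential-moment-main}.

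Your version is self-contained from the discrete proposition and yields a slightly tighter prefactor. The paper's is shorter and shows that the corollary's noise is exactly the noise of Theorem~\ref{thm:exponential-main} for $(c,q)=(\mu_0,q(m-2))$.

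One slip should be fixed. When choosing $\gamma$ you require $\log\gamma>\mu_0(m-2)r$. The summability condition you actually use later is $2\lambda>r\mu_0(m-2)$, which reads $\log\gamma>r\mu_0(m-2)^2/2$. Your stated condition does not imply this when $m>4$. Since $\gamma$ is free, simply impose the correct condition.

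Also, $\kappa$ (not only the profiles) depends on $\gamma$ through $c_*$. This is harmless, since $\gamma$ is fixed before the datum.
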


\begin{proof}
Apply Theorem~\ref{thm:exponential-main} with $c=\mu_0$ and moment order $q(m-2)$. For the resulting prefactor $K$, set $r_0=\|u_0\|_{L^2}>0$ and
\[
C(u_0,\omega):=\exp\bigl(r_0^{-(m-2)}-1\bigr)K(u_0,\omega)^{m-2}.
\]
Since $ye^{-y}\le e^{-1}$ for $y>0$, we have
\[
\exp\bigl(-\|u(t)\|_{L^2}^{-(m-2)}\bigr)
\le e^{-1}\|u(t)\|_{L^2}^{m-2}
\le e^{-1}K^{m-2}e^{-\mu_0(m-2)t},
\]
with zero trajectories covered by~\eqref{eq:zero-exponential-convention}. This gives~\eqref{eq:path-main}, and taking logarithms yields~\eqref{eq:path-decay-main} under its stated time condition. Finally,
\[
\sup_{r_*\le\|u_0\|_{L^2}\le R}\mathbb E[C^q]
\le e^{q(r_*^{-(m-2)}-1)}
\sup_{\|u_0\|_{L^2}\le R}\mathbb E[K^{q(m-2)}]<\infty
\]
by~\eqref{eq:exponential-moment-main}, proving both moment assertions.
\end{proof}

\appendix
\section{The variable exponent It\^o energy formula}\label{sec:appendix-framework}

We adapt the time discretization in~\cite[Theorem~A.2]{GwiazdaWoznickiWroblewskaZimmermann2025} to the periodic variable-exponent setting; see also~\cite{BauzetValletWittboldZimmermann2013}. Throughout this appendix, $p$ is periodic and log-H\"older continuous, with $2<p^-\le p^+<\infty$; the upper restriction $p^+<p_{\mathrm c}$ is not needed.

The cited formula uses an Orlicz gradient--flux pairing on a bounded domain with zero boundary values. Here we work directly in $L^{\widetilde p}(Q_T)$ and $L^{\widetilde p'}(Q_T)$. Time independence of $p$ gives the strong step approximations in Lemma~\ref{lem:px-time-approx}, while log-H\"older continuity gives the periodic spatial approximation used in Lemma~\ref{lem:right-endpoint-increment}. For the gradient-dependent noise, we require predictability
and square integrability of $B(u)$. Lemma~\ref{lem:transport-divergence} places the correction flux in $L^{\widetilde p'}$. Proposition~\ref{prop:variable-Ito} assumes weak $H$ continuity and the $L^2$ supremum bound, both obtained in Subsection~\ref{subsec:galerkin-limit} before flux identification. Its proof treats the internal and initial-time remainders separately and establishes the identity simultaneously at all times.
Let $\{\xi_j\}_{j\ge1}$ be a family of smooth divergence-free vector fields satisfying
\begin{equation}\label{eq:general-xi-sum}
\mathbf k:=\sum_{j=1}^\infty\norm{\xi_j}_{L^\infty}^2<\infty.
\end{equation}
By~\eqref{eq:xi-linfty-sum}, the Fourier fields~\eqref{eq:xi-def} satisfy~\eqref{eq:general-xi-sum}.

For $T>0$, write
\[
Q_T:=\Omega\times(0,T)\times\mathbb T^d,
\qquad
\widetilde p(\omega,t,x):=p(x),
\]
with measure $\dd\mathbb P\dd t\dd x$. We use the reflexive, separable product spaces $L^{\widetilde p}(Q_T)$ and $L^{\widetilde p'}(Q_T)$; e.g.~\cite{DieningNageleRuzicka2012,BauzetValletWittboldZimmermann2013,ValletZimmermann2016}.

Let $\mathbb W$ be a cylindrical Wiener process on $\mathcal U$ with basis $\{\mathbf e_j\}$. We use the transport operators
\begin{align}
\mathcal C_\xi(u)&:=\frac12\sum_j\xi_j\cdot\nabla(\xi_j\cdot\nabla u),\label{eq:Cxi-general}\\
B(u)\mathbf e_j&:=\xi_j\cdot\nabla u,\label{eq:B-general}
\end{align}
where the correction is defined variationally by
\begin{equation}\label{eq:Cxi-pairing}
\pair{\mathcal C_\xi(u)}{v}_{V^*,V}
:=-\frac12\sum_j\int_{\mathbb T^d}
(\xi_j\cdot\nabla u)(\xi_j\cdot\nabla v)\dd x.
\end{equation}
Condition~\eqref{eq:general-xi-sum} makes this pairing well defined for $u,v\in V$.

\begin{lemma}\label{lem:px-time-approx}
Let $r(\cdot)$ denote either $p(\cdot)$ or $p'(\cdot)$, and set
$\widetilde r(\omega,t,x):=r(x)$. We write
\[
\rho_{\widetilde r}(f):=
\mathbb{E}\int_{\mathbb R}\int_{\mathbb T^d}|f(\omega,t,x)|^{r(x)}\dd x\dd t
\]
for the modular of the zero extension in time.  Extend a function
$f\in L^{\widetilde r}(Q_T)$ by zero outside $(0,T)$ in the time variable and define
$\tau_hf(\omega,t,x):=f(\omega,t+h,x)$.  Then
\begin{equation}\label{eq:px-time-translation}
\norm{\tau_hf-f}_{L^{\widetilde r}(\Omega\times\mathbb R\times\mathbb T^d)}
\longrightarrow0
\quad\text{as }h\to0.
\end{equation}
Moreover, suppose that
\[
v\in L^2(\Omega\times(0,T);H),
\qquad
\nabla v\in L^{\widetilde p}(Q_T;\mathbb R^d),
\]
and that $v$ has an $H$-valued representative at every time.  There exists a sequence of nested partitions
\[
I_\ell=\{0=t_0^\ell<t_1^\ell<\cdots<t_{N_\ell}^\ell=T\},
\qquad
I_\ell\subset I_{\ell+1},
\qquad
\abs{I_\ell}:=\max_i(t_i^\ell-t_{i-1}^\ell)\longrightarrow0,
\]
whose interior nodes can be chosen from any prescribed full-measure set of times. Define the left and right endpoint approximations by
\begin{align*}
\widetilde v^\ell(t):=\sum_{i=2}^{N_\ell}
\mathbf 1_{(t_{i-1}^\ell,t_i^\ell]}(t)v(t_{i-1}^\ell),
\quad 
\widehat v^\ell(t):=\sum_{i=1}^{N_\ell-1}
\mathbf 1_{(t_{i-1}^\ell,t_i^\ell]}(t)v(t_i^\ell).
\end{align*}
We extend both step functions by zero outside $(0,T)$. The first left cell and the last right cell are also zero, so no $V$ regularity is required at $0,T$. These approximations satisfy
\begin{align}
&\widetilde v^\ell,\widehat v^\ell\longrightarrow v
~\text{strongly in }L^2(\Omega\times(0,T);H),\label{eq:px-step-H}\\
&\nabla\widetilde v^\ell,\nabla\widehat v^\ell\longrightarrow\nabla v
~\text{strongly in }L^{\widetilde p}(Q_T;\mathbb R^d).
\label{eq:px-step-grad}
\end{align}
If $v$ is adapted, then $\widetilde v^\ell$ is predictable.
\end{lemma}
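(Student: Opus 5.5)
The plan is to reduce everything to the constant-in-time structure of $\widetilde r(\omega,t,x)=r(x)$. Since $1<r^-\le r(x)\le r^+<\infty$ for $r\in\{p,p'\}$, modular convergence and norm convergence are equivalent in $L^{\widetilde r}$ by~\eqref{eq:modular-norm-1}. Therefore I will prove every convergence at the level of the modular $\rho_{\widetilde r}$. For~\eqref{eq:px-time-translation}, the key observation is that $\tau_h$ is a modular isometry, $\rho_{\widetilde r}(\tau_hf)=\rho_{\widetilde r}(f)$, because $r$ does not depend on $t$. Hence $\|\tau_h\|\le1$ uniformly in $h$, and a standard $\varepsilon/3$ density argument reduces the claim to a dense class. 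I take finite sums of products $\mathbf 1_A(\omega)g(t)\psi(x)$, with $A\in\mathcal F$, $g\in C_c(\mathbb R)$ and $\psi$ bounded; these are dense because bounded simple functions are dense when $r^+<\infty$. For such a product, $\rho_{\widetilde r}(\tau_hf-f)\to0$ follows by dominated convergence, using the uniform continuity of $g$.

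For the step approximations I follow the shifted-partition averaging argument of Krylov and Pr\'ev\^ot--R\"ockner. Extend $v$ and $\nabla v$ by zero outside $(0,T)$ and fix dyadic meshes $2^{-\ell}T$. For a shift $s\in[0,1)$, take the nodes $t_i^\ell(s)=(i+s)2^{-\ell}T$ intersected with $(0,T)$, together with $0$ and $T$, and set $\pi^\ell_s(t)$ to be the left node of the cell containing $t$. Fubini and the substitution $h=\pi^\ell_s(t)-t$ give
\[
\int_0^1\rho_{\widetilde p}\bigl(\nabla\widetilde v^\ell_s-\nabla v\bigr)\dd s
\le C\sup_{|h|\le2^{-\ell}T}\rho_{\widetilde p}(\tau_h\nabla v-\nabla v)+\varepsilon_\ell .
\]
Here $\varepsilon_\ell$ is the modular of $\nabla v$ on the two boundary cells, whose measure is at most $2^{1-\ell}T$. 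The term $\varepsilon_\ell$ tends to $0$ by absolute continuity of the integral, and this is exactly why the first left cell and the last right cell are set to zero. The analogous $L^2(\Omega\times(0,T);H)$ bound and the bounds for $\widehat v^\ell$ are handled identically. The right-hand side tends to $0$ by~\eqref{eq:px-time-translation}, so a subsequence of $\ell$ gives convergence for almost every $s$.

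The dyadic meshes are nested, so for a fixed $s$ the partitions $I_\ell$ are nested. The prescribed full-measure time set $G$ excludes only a null set of shifts, since $s\mapsto(i+s)2^{-\ell}T$ is affine and only countably many $(i,\ell)$ occur. Hence I can choose a single $s$ for which all interior nodes lie in $G$ and all the convergences~\eqref{eq:px-step-H}--\eqref{eq:px-step-grad} hold. If $v$ is adapted, $\widetilde v^\ell$ is a finite sum of $\mathbf 1_{(t_{i-1},t_i]}(t)v(t_{i-1})$ with $\mathcal F_{t_{i-1}}$-measurable coefficients. It is therefore left-continuous and adapted, hence predictable.

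The step I expect to be delicate is the averaged modular estimate. Here I will need the quasi-triangle inequality $|a+b|^{r(x)}\le2^{r^+-1}(|a|^{r(x)}+|b|^{r(x)})$ in place of a norm triangle inequality. I will also need to check that $V$-valued evaluation $\nabla v(t_i)$ is meaningful at the chosen nodes. This holds because $\nabla v(t)\in L^{p(\cdot)}$ for almost every $(\omega,t)$, so it suffices to add this full-measure set to $G$.
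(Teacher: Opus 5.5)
Your overall route is the paper's: translation invariance of the modular plus density for the first claim, then shifted dyadic grids averaged over the shift via Fubini, a subsequence of levels along which the errors vanish for almost every shift, and removal of the null set of shifts that place a node outside the prescribed time set. The gap is that your grids are not nested, so the property $I_\ell\subset I_{\ell+1}$ in the statement fails.

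With nodes $t_i^\ell(s)=(i+s)2^{-\ell}T$, the offset $s2^{-\ell}T$ shrinks with the mesh. A level-$\ell$ node $(i+s)2^{-\ell}T$ equals a level-$(\ell+1)$ node $(j+s)2^{-\ell-1}T$ if and only if $2(i+s)=j+s$, i.e.\ $s=j-2i\in\mathbb Z$. Hence for every $s\in(0,1)$ no interior node of $I_\ell$ lies in $I_{\ell+1}$; only the unshifted dyadic meshes are nested.

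Passing to a subsequence of levels does not help. Nestedness between levels $\ell<\ell'$ requires $(2^{\ell'-\ell}-1)s\in\mathbb Z$, so $s$ must be rational. The set of good shifts is only known to have full measure and may contain no rationals, for instance when the prescribed time set excludes the rational multiples of $T$.

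This is not cosmetic. The proofs of Lemma~\ref{lem:internal-remainder} and Proposition~\ref{prop:variable-Ito} use a countable dense set of nodes common to all sufficiently fine partitions. They also use the identity $\widetilde u^m=u(t_m)$ on $(t_m,t_\ell]$ for $\ell>m$, and both depend on nestedness.

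The repair is to fix the shift in absolute terms, as the paper does with the grid $a_n(s)+h_n\mathbb Z$, where $a_n(s)=s-h_n\lfloor s/h_n\rfloor$. Concretely, take nodes in $sT+2^{-\ell}T\mathbb Z$, intersected with $(0,T)$, together with $0$ and $T$.
\begin{itemize}
\item For every $s$ one has $sT+2^{-\ell}T\mathbb Z\subset sT+2^{-\ell-1}T\mathbb Z$, so the partitions are nested and remain nested along any subsequence of levels.
\item Your averaged estimate survives. The map $s\mapsto sT\bmod 2^{-\ell}T$ pushes the uniform measure on $[0,1)$ forward to the uniform measure on $[0,2^{-\ell}T)$. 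So for each $t$ the offset $t-\pi_s^\ell(t)$ is again uniformly distributed on $[0,2^{-\ell}T)$.
\item Each node is still a nondegenerate affine function of $s$, so your null-set argument for the prescribed time set goes through.
\end{itemize}
With this change the rest of your argument matches the paper's proof. In particular, if you sample the zero extension on the infinite grid, the boundary-cell term is already absorbed into the translation error, so neither $\varepsilon_\ell$ nor the quasi-triangle inequality is needed.
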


\begin{proof}
Time translation is an isometry because $\widetilde r$ is independent of $t$. Fubini's theorem and translation continuity in $L^1(\mathbb R)$ give modular convergence for bounded simple functions of finite support. Since $1<r^-\le r^+<\infty$, density and the modular--norm inequalities yield~\eqref{eq:px-time-translation}.

We next choose the partitions.  Extend $v$ and $\nabla v$ by zero in time and, after rescaling, work on $[0,1]$. For $h_n:=2^{-n}$ and a shift $s\in[0,1)$, consider the grid $a_n(s)+h_n\mathbb Z$, where $a_n(s):=s-h_n\lfloor s/h_n\rfloor$, with $0$ and $1$ added at the ends. These grids are nested in $n$ for each fixed $s$, since $a_{n+1}(s)\equiv a_n(s)\pmod{h_{n+1}}$. Let $L_{n,s}v$ and $R_{n,s}v$ denote the corresponding left and right step approximations, extended by zero outside $(0,1)$, with the first left and last right cells also set to zero. Sampling at the left endpoint of the cell containing $t$ returns $\tau_{-r}v(t)$ for some $r\in[0,h_n)$; because $s\mapsto a_n(s)$ sweeps $[0,h_n)$ with multiplicity $h_n^{-1}$, Fubini's theorem therefore yields
\begin{align*}
\int_0^1\norm{L_{n,s}v-v}_{L^2(\Omega\times\mathbb R;H)}^2\dd s
&\le\frac1{h_n}\int_0^{h_n}
\norm{\tau_{-r}v-v}_{L^2(\Omega\times\mathbb R;H)}^2\dd r\\
&\le\sup_{|h|\le h_n}
\norm{\tau_hv-v}_{L^2(\Omega\times\mathbb R;H)}^2,
\\
\int_0^1\rho_{\widetilde p}\!\left(
\nabla L_{n,s}v-\nabla v\right)\dd s
&\le\frac1{h_n}\int_0^{h_n}
\rho_{\widetilde p}(\tau_{-r}\nabla v-\nabla v)\dd r\\
&\le\sup_{|h|\le h_n}
\rho_{\widetilde p}(\tau_h\nabla v-\nabla v),
\end{align*}
The change of variables $r\mapsto h_n-r$ gives the same two bounds for $R_{n,s}$, with $\tau_{-r}$ replaced by $\tau_r$.  All four averaged errors are thus controlled by the translation errors, which tend to zero by~\eqref{eq:px-time-translation}.

Choose $n_\ell$ so that the sum of the four averaged errors is at most $2^{-2\ell}$, and let $E_\ell(s)$ denote their pointwise sum. Since $\int_0^1\sum_\ell E_\ell(s)\dd s<\infty$, Tonelli's theorem implies $E_\ell(s)\to0$ for almost every shift. Given a prescribed full-measure set $\mathcal R\subset(0,T)$, the shifts for which one of the countably many selected nodes lies outside $\mathcal R$ form a null set; hence a countable intersection yields a single shift for which all four errors vanish and every interior node lies in $\mathcal R$. After rescaling, we obtain the nested partitions and~\eqref{eq:px-step-H}--\eqref{eq:px-step-grad}. If $v$ is adapted, each $v(t_{i-1}^\ell)$ is $\mathcal F_{t_{i-1}^\ell}$-measurable, and the interval convention $(t_{i-1}^\ell,t_i^\ell]$ makes $\widetilde v^\ell$ predictable.
\end{proof}

\begin{lemma}\label{lem:transport-divergence}
Suppose that~\eqref{eq:general-xi-sum} holds and let $u$ satisfy $\nabla u\in L^{\widetilde p}(Q_T;\mathbb R^d)$.  Define
\begin{equation}\label{eq:transport-flux-K}
\mathfrak K(u):=\frac12\sum_{j=1}^{\infty}\xi_j \bigl(\xi_j\cdot\nabla u\bigr).
\end{equation}
Then the series converges absolutely almost everywhere,
$
\abs{\mathfrak K(u)}\le\frac12\mathbf k\abs{\nabla u}$, 
and
\begin{equation}\label{eq:K-pprime}
\mathfrak K(u)\in L^2(Q_T;\mathbb R^d)\hookrightarrow L^{\widetilde p'}(Q_T;\mathbb R^d).
\end{equation}
Moreover, in the variational sense, $\mathcal C_\xi(u)=\Div\mathfrak K(u)$ in $V^*$ for almost every $(\omega,t)$.
\end{lemma}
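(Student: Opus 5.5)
The plan is to establish the pointwise bound first, then the integrability, and finally identify the correction as a divergence by pairing against test functions. The covariance identity is not used; everything follows from \eqref{eq:general-xi-sum} and Cauchy--Schwarz.

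\textbf{Pointwise bound.} Fix $(\omega,t,x)$ with $|\nabla u(\omega,t,x)|<\infty$; this holds almost everywhere since $\nabla u\in L^{\widetilde p}(Q_T)$. Then
\[
\sum_j\bigl|\xi_j(x)\,(\xi_j(x)\cdot\nabla u)\bigr|\le\sum_j|\xi_j(x)|^2|\nabla u|\le\mathbf k\,|\nabla u|.
\]
So the series \eqref{eq:transport-flux-K} converges absolutely almost everywhere and satisfies $|\mathfrak K(u)|\le\frac12\mathbf k|\nabla u|$. Measurability of the sum follows because each partial sum is measurable and we pass to a pointwise limit.

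\textbf{Integrability.} Since $p^->2$ and $Q_T$ has finite measure $T$, we have $|\nabla u|^2\le1+|\nabla u|^{p(x)}$, so $\nabla u\in L^2(Q_T)$. By the pointwise bound, $\mathfrak K(u)\in L^2(Q_T)$ as well. For the embedding $L^2\hookrightarrow L^{\widetilde p'}$, note that $p'(x)\le (p^-)'<2$. Hence $|f|^{p'(x)}\le1+|f|^2$, which gives modular finiteness. The norm bound then follows from the modular--norm inequality \eqref{eq:modular-norm-2} applied on $Q_T$, or equivalently from the variable-exponent H\"older inequality with the constant function $1$.

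\textbf{Identification.} For almost every $(\omega,t)$ we have $\nabla u(\omega,t)\in L^{p(\cdot)}\subset L^2$. For such $(\omega,t)$ and any $v\in V$, we compute
\[
\langle\Div\mathfrak K(u),v\rangle=-\int\mathfrak K(u)\cdot\nabla v\dd x=-\frac12\int\sum_j(\xi_j\cdot\nabla u)(\xi_j\cdot\nabla v)\dd x.
\]
To interchange the sum and the integral, we need the dominating function
\[
\sum_j|\xi_j|^2|\nabla u||\nabla v|\le\mathbf k|\nabla u||\nabla v|\in L^1,
\]
which is integrable since $\nabla v\in L^2$. With this domination the right-hand side equals \eqref{eq:Cxi-pairing}. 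Finally, boundedness of $\Div\mathfrak K(u)$ on $V$ follows from $\mathfrak K(u)\in L^{p'(\cdot)}$ together with \eqref{eq:px-holder}.

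There is no real obstacle here. The only point requiring care is justifying the interchange of sum and integral, and dominated convergence supplied by \eqref{eq:general-xi-sum} handles it.
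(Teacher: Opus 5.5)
Your proposal is correct and follows essentially the same route as the paper. It uses the pointwise bound via $\sum_j\|\xi_j\|_{L^\infty}^2=\mathbf k$, obtains integrability from $(p')^+<2<p^-$ on the finite-measure set $Q_T$, and identifies $\mathcal C_\xi(u)=\Div\mathfrak K(u)$ through the absolute summability that justifies interchanging sum and integral. You also spell out details the paper leaves implicit: the elementary bounds $|\nabla u|^2\le1+|\nabla u|^{p(x)}$ and $|f|^{p'(x)}\le1+|f|^2$, and the Fubini slice argument.
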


\begin{proof}
The pointwise estimate follows from
\[
\sum_j\abs{\xi_j (\xi_j\cdot\nabla u)}
\le\left(\sum_j\norm{\xi_j}_{L^\infty}^2\right)\abs{\nabla u}.
\]
Since $(p')^+<2<p^-$ and $Q_T$ has finite measure,~\eqref{eq:K-pprime} follows. Absolute summability gives, for $v\in V$,
\begin{align*}
\pair{\Div\mathfrak K(u)}{v}_{V^*,V}
&=-\int_{\mathbb T^d}\mathfrak K(u)\cdot\nabla v\dd x\\
&=-\frac12\sum_j\int_{\mathbb T^d}(\xi_j\cdot\nabla u)(\xi_j\cdot\nabla v)\dd x
=\pair{\mathcal C_\xi(u)}{v}_{V^*,V}.
\end{align*}
\end{proof}

\begin{lemma}\label{lem:right-endpoint-increment}
Let $A\in L^{\widetilde p'}(Q_T;\mathbb R^d)$, and let $\mathbb M$ be an $H$-valued continuous martingale.  Suppose that, in $V^*$,
\begin{equation}\label{eq:abstract-divergence-total}
u(t)=u_0+\int_0^t\Div A(r)\dd r+\mathbb M(t),
\end{equation}
that $u$ is weakly continuous in $H$, and that $\nabla u\in L^{\widetilde p}(Q_T;\mathbb R^d)$.  Let $\mathcal R\subset(0,T)$ be a full-measure set such that $u(t)\in V$ almost surely for every $t\in\mathcal R$.  Then, for $t\in\mathcal R$ and $0\le s<t$, almost surely,
\begin{align}
\norm{u(t)}_H^2-\norm{u(s)}_H^2
={}&-2\int_s^t\int_{\mathbb T^d}A(r)\cdot\nabla u(t)\dd x\dd r
+2\pair{u(s)}{\mathbb M(t)-\mathbb M(s)}_H\notag\\
&+\norm{\mathbb M(t)-\mathbb M(s)}_H^2
-\norm{u(t)-u(s)-\mathbb M(t)+\mathbb M(s)}_H^2.
\label{eq:discrete-increment-identity}
\end{align}
\end{lemma}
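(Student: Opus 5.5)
The identity~\eqref{eq:discrete-increment-identity} is a purely algebraic consequence of the Hilbert-space polarization identity combined with the $V^*$-$V$ duality at the right endpoint. The plan is to work on one full-probability event on which~\eqref{eq:abstract-divergence-total} holds at the two times $s$ and $t$, $u(t)\in V$, and $\nabla u\in L^{p(\cdot)}$ along a.e. time slice. Set
\[
a:=u(t),\qquad b:=u(s),\qquad \Delta\mathbb M:=\mathbb M(t)-\mathbb M(s),\qquad w:=a-b-\Delta\mathbb M .
\]
By~\eqref{eq:abstract-divergence-total}, $w=\int_s^t\Div A(r)\dd r$ in $V^*$. Since $A(\omega,\cdot)\in L^{p'(\cdot)}$ for almost every $\omega$ by Fubini, the Bochner integral converges in $V^*$ by~\eqref{eq:px-holder}. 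On the other hand, $w\in H$, because $a$, $b$ and $\Delta\mathbb M$ all lie in $H$; here we use weak continuity of $u$ in $H$ at every time, including $s=0$.

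The starting point is the elementary expansion in $H$,
\[
\|a\|_H^2-\|b\|_H^2=2\langle a-b,a\rangle_H-\|a-b\|_H^2 .
\]
Write $a-b=w+\Delta\mathbb M$. Then
\[
\|a-b\|_H^2=\|w\|^2+2\langle w,\Delta\mathbb M\rangle+\|\Delta\mathbb M\|^2,
\qquad
\langle a-b,a\rangle=\langle w,a\rangle+\langle\Delta\mathbb M,a\rangle .
\]
Substituting $a=b+w+\Delta\mathbb M$ into the last term gives
\[
2\langle\Delta\mathbb M,a\rangle=2\langle\Delta\mathbb M,b\rangle+2\langle\Delta\mathbb M,w\rangle+2\|\Delta\mathbb M\|^2 .
\]
Collecting terms, the cross terms $\pm2\langle w,\Delta\mathbb M\rangle$ cancel, and we obtain
\[
\|a\|^2-\|b\|^2=2\langle w,a\rangle_H+2\langle b,\Delta\mathbb M\rangle+\|\Delta\mathbb M\|^2-\|w\|^2 .
\]
This is exactly~\eqref{eq:discrete-increment-identity}, provided we show that
\[
2\langle w,a\rangle_H=-2\int_s^t\int_{\mathbb T^d}A(r)\cdot\nabla u(t)\dd x\dd r .
\]

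Establishing this last identity is the only nontrivial step. Since $a=u(t)\in V$ and $w\in H$, the Gelfand triple gives $\langle w,a\rangle_H=\langle w,a\rangle_{V^*,V}$. Linearity of the Bochner integral then yields
\[
\Bigl\langle\int_s^t\Div A\dd r,\,a\Bigr\rangle_{V^*,V}=\int_s^t\langle\Div A(r),a\rangle\dd r=-\int_s^t\!\int A(r)\cdot\nabla a\dd x\dd r .
\]
The double integral is finite by~\eqref{eq:px-holder}, since $A\in L^{p'(\cdot)}$ and $\nabla a\in L^{p(\cdot)}$.

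The main care point is that~\eqref{eq:abstract-divergence-total} is an identity in $V^*$ with $u$ possibly only $V^*$-continuous. We must therefore fix a version for which the relation holds simultaneously at $s$ and $t$ on the chosen event, and check that the $H$-valued weakly continuous representative agrees with the $V^*$ one. This agreement follows from the embedding $H\hookrightarrow V^*$. The event depends on $(s,t)$, which is acceptable because the statement is for fixed $s,t$ almost surely.
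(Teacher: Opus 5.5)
Your proposal is correct and follows essentially the paper's route: you identify the drift increment $\int_s^t\Div A(r)\dd r$ with the $H$-element $u(t)-u(s)-\mathbb M(t)+\mathbb M(s)$, apply the Hilbert-space polarization identity, and evaluate the pairing with $u(t)\in V$ through the divergence form. The only difference is that the paper tests against mollifications $J_\varepsilon u(t)$ and passes to the limit with the variable-exponent H\"older inequality, whereas you use the Gelfand-triple compatibility $\langle w,u(t)\rangle_H=\langle w,u(t)\rangle_{V^*,V}$ directly, which makes that approximation unnecessary; note also that the time integrability of $\|A(r)\|_{L^{p'(\cdot)}}$ needed for the Bochner integral comes from the modular--norm bounds together with Jensen's inequality in time, as the paper notes, not from~\eqref{eq:px-holder} alone.
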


\begin{proof}
For almost every $\omega$, since $A(\omega,\cdot,\cdot)\in L^{p'(\cdot)}((0,T)\times\mathbb T^d)$, Jensen's inequality in time implies that $\int_s^t\Div A(r)\dd r\in V^*$. Fix $t\in\mathcal R$. 
By~\eqref{eq:abstract-divergence-total}, we have
\[
\int_s^t\Div A(r)\dd r
=u(t)-u(s)-\bigl(\mathbb M(t)-\mathbb M(s)\bigr)
\quad\text{in }V^*.
\]
The right-hand side belongs to $H$. Thus the drift increment,
initially defined in $V^*$, is identified with an element of $H$.
In particular, its $H$-norm and the Hilbert space polarization
identity used below are well defined.
Periodic Friedrichs mollification then gives
\[
J_\varepsilon u(t)\to u(t)\quad\hbox{in }H,
\quad
\nabla J_\varepsilon u(t)\to\nabla u(t)\quad\hbox{in }L^{p(\cdot)}.
\]
We test the increment of~\eqref{eq:abstract-divergence-total} by $J_\varepsilon u(t)$ and let $\varepsilon\downarrow0$. Variable-exponent H\"older justifies the drift limit. The resulting identity is the polarization identity for
$u(t)-u(s)=\int_s^t\Div A(r)\dd r+\mathbb M(t)-\mathbb M(s)$, which shows~\eqref{eq:discrete-increment-identity}.
\end{proof}

\begin{lemma}\label{lem:Hilbert-discrete-qv}
Let
\[
\mathbb M(t)=\int_0^t\Phi(s)\,\dd\mathbb W_s,
\qquad
\Phi\in L^2_{\mathcal P}(\Omega\times(0,T);\mathcal L_2(\mathcal U,H)),
\]
and let $I_\ell=\{0=t_0^\ell<\cdots<t_{N_\ell}^\ell=T\}$ be deterministic partitions with $|I_\ell|\to0$.  For every fixed $t$ which is a node of all sufficiently fine partitions,
\begin{equation}\label{eq:Hilbert-discrete-qv}
\sum_{t_{i+1}^\ell\le t}\|\mathbb M(t_{i+1}^\ell)-\mathbb M(t_i^\ell)\|_H^2
\longrightarrow
\int_0^t\|\Phi(s)\|_{\mathcal L_2(\mathcal U,H)}^2\,\dd s
\end{equation}
in probability.  The same conclusion holds after applying any fixed bounded orthogonal projection to $\mathbb M$ and $\Phi$.
\end{lemma}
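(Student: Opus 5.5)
The proof will be the standard argument for discrete quadratic variation of a Hilbert-valued stochastic integral, written at the level of $H$. Fix $t$ and take $\ell$ large enough that $t$ is a node of $I_\ell$. Set $\Delta_i:=\mathbb M(t_{i+1}^\ell)-\mathbb M(t_i^\ell)$ and $S_\ell(t):=\sum_{t_{i+1}^\ell\le t}\|\Delta_i\|_H^2$. The aim is to show that $S_\ell(t)-\int_0^t\|\Phi\|_{\mathcal L_2}^2\dd s\to0$ in probability.

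The first step is a localisation. Let $\tau_K:=\inf\{s:\int_0^s\|\Phi\|_{\mathcal L_2}^2\dd r\ge K\}\wedge T$ and $\Phi_K:=\Phi\mathbf 1_{[0,\tau_K]}$. On $\{\tau_K=T\}$ both sides of~\eqref{eq:Hilbert-discrete-qv} are unchanged if $\Phi$ is replaced by $\Phi_K$. Since $\mathbb P(\tau_K<T)\to0$, it suffices to prove convergence in $L^1(\Omega)$ for $\Phi_K$. We may therefore assume $\int_0^T\|\Phi\|^2\dd s\le K$ almost surely.

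The second step applies the $H$-valued It\^o formula to $\|\mathbb M(s)-\mathbb M(t_i)\|_H^2$ on $[t_i,t_{i+1}]$. This gives
\[
\|\Delta_i\|_H^2=\int_{t_i}^{t_{i+1}}\|\Phi\|_{\mathcal L_2}^2\dd s+2\int_{t_i}^{t_{i+1}}\pair{\mathbb M(s)-\mathbb M(t_i)}{\Phi(s)\dd\mathbb W_s}_H .
\]
Summing over $i$ gives $S_\ell(t)-\int_0^t\|\Phi\|^2\dd s=2N_\ell(t)$. Here $N_\ell(t)=\int_0^t\pair{\mathbb M(s)-\mathbb M(\lfloor s\rfloor_\ell)}{\Phi(s)\dd\mathbb W_s}$, and $\lfloor s\rfloor_\ell$ denotes the left node of the cell containing $s$. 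The integrand is predictable because the partitions are deterministic.

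The main step is to show $N_\ell(t)\to0$. By the Itô isometry,
\[
\mathbb E|N_\ell(t)|^2\le\mathbb E\int_0^t\|\mathbb M(s)-\mathbb M(\lfloor s\rfloor_\ell)\|_H^2\|\Phi(s)\|_{\mathcal L_2}^2\dd s .
\]
Path continuity of $\mathbb M$ and $|I_\ell|\to0$ make the first factor tend to zero pointwise. To get domination I would localise once more, stopping $\mathbb M$ at $\sigma_L:=\inf\{s:\|\mathbb M(s)\|_H\ge L\}$. The factor is then bounded by $4L^2$, and $\int\|\Phi\|^2\le K$ gives an integrable majorant. Dominated convergence then yields $\mathbb E|N_\ell|^2\to0$ on the stopped events. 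Since $\mathbb P(\sigma_L<T)\to0$ by continuity, $N_\ell(t)\to0$ in probability. I expect this double localisation to be the only delicate point. If one prefers to avoid stopping $\mathbb M$, Burkholder's inequality gives $\mathbb E\sup\|\mathbb M\|^2\le4K$, and uniform integrability of the increments can be used instead.

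Finally, let $P$ be a bounded orthogonal projection on $H$. Then $P\mathbb M=\int P\Phi\dd\mathbb W$, and $P\Phi\in L^2_{\mathcal P}(\mathcal L_2(\mathcal U,H))$ because $\|P\Phi\|_{\mathcal L_2}\le\|\Phi\|_{\mathcal L_2}$. The argument above therefore applies verbatim with $P\Phi$ in place of $\Phi$.
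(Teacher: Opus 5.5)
Your proposal is correct, but it takes a different route from the paper.

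\textbf{The paper's route.} The paper reduces to finite dimensions. With $\Pi_N$ the projection onto the first $N$ vectors of an orthonormal basis of $H$, it applies the classical discrete quadratic variation theorem to the $\mathbb R^N$-valued martingale $\Pi_N\mathbb M$. It then controls the remainder by the exact identity $\mathbb E\sum_{t_{i+1}^\ell\le t}\|(I-\Pi_N)\Delta_i\mathbb M\|_H^2=\mathbb E\int_0^t\|(I-\Pi_N)\Phi\|_{\mathcal L_2}^2\dd s$. This identity follows from the orthogonal splitting and the It\^o isometry on each cell, is uniform in $\ell$, and tends to zero as $N\to\infty$. Letting $\ell\to\infty$ and then $N\to\infty$ finishes the proof.

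\textbf{Your route.} You work directly in $H$. The It\^o formula for $\|\mathbb M(s)-\mathbb M(t_i^\ell)\|_H^2$ identifies the error exactly as $2N_\ell(t)$. This is a scalar stochastic integral whose integrand vanishes uniformly by path continuity, and localization plus dominated convergence then removes it.

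\textbf{What each buys.} The paper's argument is shorter once the finite-dimensional theorem is taken as a black box. It needs neither the Hilbert-space It\^o formula nor stopping times. Its projection-tail structure is also exactly what is reused in Lemma~\ref{lem:internal-remainder}, which is why the statement carries the projection clause. Your argument is self-contained: it essentially proves the finite-dimensional result at the level of $H$. Via Doob's inequality, your exact error representation would also give convergence uniformly over the nodes of $I_\ell$.

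\textbf{Two minor points.} First, one of your two localizations is redundant. Since $\Phi\in L^2_{\mathcal P}$ already, $4L^2\|\Phi\|_{\mathcal L_2}^2$ is an integrable majorant after stopping at $\sigma_L$, so the $\tau_K$ step is unnecessary. Conversely, after the $\tau_K$ step, Doob's inequality makes $4\sup_r\|\mathbb M(r)\|_H^2\|\Phi(s)\|_{\mathcal L_2}^2$ integrable without $\sigma_L$. Second, the $\sigma_L$ route yields convergence in probability rather than the $L^1$ convergence announced in your first step. This is harmless, since only convergence in probability is claimed.
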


\begin{proof}
Let $\Pi_N$ project onto the first $N$ vectors of an $H$-orthonormal basis and write $\Delta_i^\ell\mathbb M=\mathbb M(t_{i+1}^\ell)-\mathbb M(t_i^\ell)$. Finite-dimensional quadratic variation gives the assertion for $\Pi_N\mathbb M$; see~\cite{PrevotRockner2007,LiuRockner2015}. Orthogonality and It\^o's isometry give the uniform tail bound
\[
\mathbb E\sum_{t_{i+1}^\ell\le t}
\|(I-\Pi_N)\Delta_i^\ell\mathbb M\|_H^2
=\mathbb E\int_0^t\|(I-\Pi_N)\Phi(s)\|_{\mathcal L_2}^2\dd s
\longrightarrow0.
\]
The limiting integrals have the same tail bound. Letting $\ell\to\infty$ and then $N\to\infty$ proves~\eqref{eq:Hilbert-discrete-qv}. Replacing $\Phi$ by $P\Phi$ gives the assertion for an orthogonal projection $P$.
\end{proof}

\begin{lemma}\label{lem:internal-remainder}
Assume the hypotheses of Lemma~\ref{lem:right-endpoint-increment} and, in addition, that
$\mathbb M(t)=\int_0^tB(u(s))\dd\mathbb W_s$ with $B(u)\in L^2_{\mathcal P}(\Omega\times(0,T);\mathcal L_2(\mathcal U,H))$. Let $I_\ell$ be the nested partitions of Lemma~\ref{lem:px-time-approx}, with all interior nodes lying in $\mathcal R$. For a fixed interior node $t=t_{k_\ell}^\ell$ that belongs to all sufficiently fine partitions, set
\begin{equation}\label{eq:internal-remainder}
\mathscr R_\ell(t):=\sum_{i=1}^{k_\ell-1}\norm{d_i^\ell}_H^2,
\qquad
d_i^\ell:=u(t_{i+1}^\ell)-u(t_i^\ell)-\bigl(\mathbb M(t_{i+1}^\ell)-\mathbb M(t_i^\ell)\bigr).
\end{equation}
Then
\begin{equation}\label{eq:internal-remainder-zero}
\mathscr R_\ell(t)\longrightarrow0
\quad\text{in probability}.
\end{equation}
\end{lemma}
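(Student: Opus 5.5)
The idea is to write each increment $d_i^\ell$ as a drift increment and bound the sum of squared $H$-norms by the drift pairing, in the form of a Riemann sum. By \eqref{eq:abstract-divergence-total}, $d_i^\ell=\int_{t_i^\ell}^{t_{i+1}^\ell}\Div A(r)\dd r$. By the proof of Lemma~\ref{lem:right-endpoint-increment} this increment lies in $H$. Since both endpoints are interior nodes in $\mathcal R$, it can be tested against elements of $V$. Expanding $\|d_i^\ell\|_H^2=\pair{d_i^\ell}{u(t_{i+1}^\ell)-u(t_i^\ell)}-\pair{d_i^\ell}{\mathbb M(t_{i+1}^\ell)-\mathbb M(t_i^\ell)}$ and using the mollification argument of Lemma~\ref{lem:right-endpoint-increment} to justify the first pairing gives
\[
\|d_i^\ell\|_H^2=-\int_{t_i^\ell}^{t_{i+1}^\ell}\!\!\int_{\mathbb T^d}A(r)\cdot\bigl(\nabla u(t_{i+1}^\ell)-\nabla u(t_i^\ell)\bigr)\dd x\dd r-\pair{d_i^\ell}{\Delta_i^\ell\mathbb M}_H .
\]
Summing over $1\le i\le k_\ell-1$, the first term becomes
\[
-\int_{t_1^\ell}^{t}\!\int A\cdot(\nabla\widehat u^\ell-\nabla\widetilde u^\ell)\dd x\dd r,
\]
up to the boundary cell conventions, with the step approximations of Lemma~\ref{lem:px-time-approx}. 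The cells near $0$ are handled as follows: the first cell $(0,t_1^\ell]$ is excluded from $\mathscr R_\ell$ and contributes nothing.

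The drift term is controlled by variable-exponent H\"older on $Q_T$, using \eqref{eq:px-holder}. This gives
\[
\mathbb E\Bigl|\int A\cdot(\nabla\widehat u^\ell-\nabla\widetilde u^\ell)\Bigr|\le2\|A\|_{L^{\widetilde p'}}\bigl(\|\nabla\widehat u^\ell-\nabla u\|_{L^{\widetilde p}}+\|\nabla\widetilde u^\ell-\nabla u\|_{L^{\widetilde p}}\bigr),
\]
which tends to zero by \eqref{eq:px-step-grad}. Truncating the sum at $t$ only restricts the domain, so the same bound applies. The drift part therefore vanishes in $L^1(\Omega)$, hence in probability.

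The cross term is the delicate part. Young's inequality gives $|\pair{d_i^\ell}{\Delta_i^\ell\mathbb M}|\le\frac12\|d_i^\ell\|^2+\frac12\|\Delta_i^\ell\mathbb M\|^2$, but this loses, since the quadratic variation does not vanish. Instead I would use the decomposition $d_i^\ell=u(t_{i+1}^\ell)-u(t_i^\ell)-\Delta_i^\ell\mathbb M$ and write the cross term as
\[
\pair{u(t_{i+1}^\ell)-u(t_i^\ell)}{\Delta_i^\ell\mathbb M}-\|\Delta_i^\ell\mathbb M\|^2.
\]
Summing, Lemma~\ref{lem:Hilbert-discrete-qv} sends $\sum\|\Delta_i^\ell\mathbb M\|^2$ to $\int_0^t\|B(u)\|_{\mathcal L_2}^2$ in probability. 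The remaining piece splits as
\[
\sum\pair{u(t_{i+1}^\ell)}{\Delta_i^\ell\mathbb M}-\sum\pair{u(t_i^\ell)}{\Delta_i^\ell\mathbb M}.
\]
The left-point sum is a discrete stochastic integral of $\widetilde u^\ell$, which is predictable by Lemma~\ref{lem:px-time-approx}. By \eqref{eq:px-step-H} and It\^o's isometry (after localizing with $\sup\|u\|_H\le\|u_0\|_H$ via stopping), it converges to $\int_0^t\pair{u}{B(u)\dd\mathbb W}$. For the right-point sum, the difference from the left-point sum is $\sum\pair{u(t_{i+1}^\ell)-u(t_i^\ell)}{\Delta_i^\ell\mathbb M}$. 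Substituting once more $u(t_{i+1}^\ell)-u(t_i^\ell)=d_i^\ell+\Delta_i^\ell\mathbb M$ gives $\sum\pair{d_i^\ell}{\Delta_i^\ell\mathbb M}+\sum\|\Delta_i^\ell\mathbb M\|^2$.

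This yields a closed linear relation: writing $X_\ell:=\sum\pair{d_i^\ell}{\Delta_i^\ell\mathbb M}$, the identity gives $\mathscr R_\ell=-(\text{drift})-X_\ell$. To show $X_\ell\to0$ directly, I would exploit that $d_i^\ell$ is a pure drift increment and pair via duality with $\Delta_i^\ell\mathbb M$ smoothed. Concretely, I would approximate $\mathbb M$ by $\Pi_N\mathbb M$, which has values in $V$ because $\Pi_N$ projects onto Fourier modes. Then
\[
|\pair{d_i^\ell}{\Pi_N\Delta_i^\ell\mathbb M}|\le2\int_{t_i^\ell}^{t_{i+1}^\ell}\|A(r)\|_{L^{p'(\cdot)}}\dd r\,\|\nabla\Pi_N\Delta_i^\ell\mathbb M\|_{L^{p(\cdot)}}.
\]
Summing, this is bounded by $C_N\|A\|_{L^1}\max_i\|\Delta_i^\ell\mathbb M\|_H$, which tends to zero by path continuity of $\mathbb M$. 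The tail is bounded by $\mathscr R_\ell^{1/2}\bigl(\sum\|(I-\Pi_N)\Delta_i^\ell\mathbb M\|^2\bigr)^{1/2}$, which is small uniformly in $\ell$ by the tail bound in the proof of Lemma~\ref{lem:Hilbert-discrete-qv}. Hence $\mathscr R_\ell\le o_P(1)+\varepsilon_N\mathscr R_\ell^{1/2}$. Since $\mathscr R_\ell$ is tight (it is bounded by $3\sum\|u(t_{i+1})-u(t_i)\|^2+\dots$, or more simply from the identity itself), this yields \eqref{eq:internal-remainder-zero}.

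I expect this absorption step, controlling the cross term $X_\ell$ through the Fourier truncation, to be the main obstacle.
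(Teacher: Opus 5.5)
Your proposal is correct and follows essentially the same route as the paper. The shared ingredients are the splitting $\norm{d_i^\ell}_H^2=-\int_{t_i^\ell}^{t_{i+1}^\ell}\int_{\mathbb T^d}A\cdot\nabla\bigl(u(t_{i+1}^\ell)-u(t_i^\ell)-\Pi_N\Delta_i^\ell\mathbb M\bigr)\dd x\dd r+\pair{d_i^\ell}{(\Pi_N-I)\Delta_i^\ell\mathbb M}_H$, the step approximations for the drift, uniform continuity of $\mathbb M$ for the projected part, and Cauchy--Schwarz absorption of the tail.

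Your right-/left-point detour is circular and can be dropped, and no tightness argument is needed. In the final step, your deterministic $\varepsilon_N$ should be the random $Q_{\ell,N}^{1/2}$; it satisfies $\mathbb E\,Q_{\ell,N}\le\mathbb E\int_0^t\norm{(I-\Pi_N)B(u)}_{\mathcal L_2(\mathcal U,H)}^2\dd s$ uniformly in $\ell$. Then $x\le b+\sqrt a$ for $x^2\le a+bx$, together with Chebyshev's inequality, replaces the paper's convergence in probability of $Q_{\ell,N}$ and its subsequence extraction.
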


\begin{proof}
Let $\Pi_N$ be the orthogonal projection onto the span of the first $N$ elements of an $H$-orthonormal basis of smooth periodic zero-mean functions.  For $i\ge1$, both endpoints belong to $\mathcal R$ and
\[
d_i^\ell=\int_{t_i^\ell}^{t_{i+1}^\ell}\Div A(s)\dd s\quad\text{in }V^*.
\]
Writing $\Delta_i\mathbb M=\mathbb M(t_{i+1}^\ell)-\mathbb M(t_i^\ell)$, we obtain
\begin{align*}
\norm{d_i^\ell}_H^2
=-\int_{t_i^\ell}^{t_{i+1}^\ell}\int_{\mathbb T^d}
A(s)\cdot\nabla\bigl(u(t_{i+1}^\ell)-u(t_i^\ell)-\Pi_N\Delta_i\mathbb M\bigr)\dd x\dd s+\pair{d_i^\ell}{(\Pi_N-I)\Delta_i\mathbb M}_H.
\end{align*}
Summing in $i$ and using Cauchy--Schwarz yields
\begin{equation}\label{eq:remainder-projection-expanded}
\mathscr R_\ell(t)
\le \varepsilon_{\ell,N}(t)
+\bigl(\mathscr R_\ell(t)\bigr)^{1/2}Q_{\ell,N}(t)^{1/2},
\end{equation}
where
\begin{align*}
\varepsilon_{\ell,N}(t)
&:=\left|\sum_{i=1}^{k_\ell-1}\int_{t_i^\ell}^{t_{i+1}^\ell}\int_{\mathbb T^d}
A(s)\cdot\nabla\bigl(u(t_{i+1}^\ell)-u(t_i^\ell)-\Pi_N\Delta_i\mathbb M\bigr)\dd x\dd s\right|,\\
Q_{\ell,N}(t)&:=\sum_{i=1}^{k_\ell-1}\norm{(I-\Pi_N)\Delta_i\mathbb M}_H^2.
\end{align*}
For fixed $N$,~\eqref{eq:px-step-grad} gives
\[
\int_{Q_T}|A|\,|\nabla(\widehat u^\ell-\widetilde u^\ell)|\dd\mathbb P\dd s\dd x\longrightarrow0.
\]
The increments of $\Pi_N\mathbb M$ converge uniformly to zero in $V$, almost surely. Since $\int_0^t\|\Div A(s)\|_{V^*}\dd s<\infty$ almost surely, the contribution of these terms also vanishes. Hence $\varepsilon_{\ell,N}(t)\to0$ in probability.

By Lemma~\ref{lem:Hilbert-discrete-qv}, the full sum including $i=0$ converges in probability to
\[
Q_N(t):=\int_0^t\|(I-\Pi_N)B(u(s))\|_{\mathcal L_2(\mathcal U,H)}^2\dd s.
\]
The omitted first increment tends to zero almost surely by continuity of $\mathbb M$. Therefore
\begin{equation*}
Q_{\ell,N}(t)\longrightarrow Q_N(t)\quad\text{in probability}.
\end{equation*}

Given an arbitrary subsequence in $\ell$, choose a further diagonal subsequence along which these two convergences hold almost surely for every $N$. Setting $x_\ell=(\mathscr R_\ell(t))^{1/2}$, we obtain from~\eqref{eq:remainder-projection-expanded} that
\[
\limsup_{\ell\to\infty}x_\ell\le Q_N(t)^{1/2}\qquad\text{a.s. for every }N.
\]
Since $\Pi_N\to I$ strongly on $H$, dominated convergence for Hilbert--Schmidt operators yields $Q_N(t)\to0$ almost surely. Consequently, every subsequence contains a further subsequence along which $\mathscr R_\ell(t)\to0$ almost surely; this is equivalent to~\eqref{eq:internal-remainder-zero}.
\end{proof}

The following energy formula applies before identification of the nonlinear flux.
\begin{proposition}\label{prop:variable-Ito}
Let $u_0\in L^2(\Omega,\mathcal F_0,\mathbb P;H)$ and let $u$ be an adapted process with weakly continuous paths in $H$ such that
\begin{equation}\label{eq:Ito-sup-assumption}
\mathbb{E}\sup_{0\le t\le T}\norm{u(t)}_H^2<\infty.
\end{equation}
Assume that, in $V^*$,
\begin{equation}\label{eq:abstract-unidentified}
u(t)=u_0+\int_0^t\Div F(s)\dd s
+\int_0^t\mathcal C_\xi(u(s))\dd s
+\int_0^tB(u(s))\dd\mathbb W_s,
\end{equation}
where
\[
\nabla u\in L^{\widetilde p}(Q_T;\mathbb R^d),
\qquad
F\in L^{\widetilde p'}(Q_T;\mathbb R^d),
\qquad
B(u)\in L^2_{\mathcal P}(\Omega\times(0,T);\mathcal L_2(\mathcal U,H)),
\]
and $\mathcal C_\xi(u)$ is given by~\eqref{eq:Cxi-pairing}.  Then $u$ has a modification with strongly continuous paths in $H$ and, on one set of probability one, for every $t\in[0,T]$,
\begin{align}
\norm{u(t)}_H^2
={}&\norm{u_0}_H^2
-2\int_0^t\int_{\mathbb T^d}F\cdot\nabla u\dd x\dd s
+2\int_0^t\pair{\mathcal C_\xi(u)}{u}\dd s\notag\\
&+\int_0^t\norm{B(u)}_{\mathcal L_2(\mathcal U,H)}^2\dd s
+2\int_0^t\pair{u(s)}{B(u(s))\dd\mathbb W_s}_H.
\label{eq:variable-Ito-formula}
\end{align}
For the transport operators~\eqref{eq:Cxi-general}--\eqref{eq:B-general}, the last three terms cancel identically, and hence
\begin{equation}\label{eq:unidentified-energy}
\norm{u(t)}_H^2
+2\int_0^t\int_{\mathbb T^d}F(s,x)\cdot\nabla u(s,x)\dd x\dd s
=\norm{u_0}_H^2.
\end{equation}
\end{proposition}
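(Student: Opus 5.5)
The plan is to follow the time-discretization scheme announced at the start of the appendix: first establish the identity at a fixed interior node $t$ belonging to all sufficiently fine partitions, then extend it to all $t\in[0,T]$ by density and continuity. I would first merge the correction into the flux. By Lemma~\ref{lem:transport-divergence}, $\mathcal C_\xi(u)=\Div\mathfrak K(u)$ with $\mathfrak K(u)\in L^{\widetilde p'}(Q_T)$. Hence \eqref{eq:abstract-unidentified} takes the form \eqref{eq:abstract-divergence-total} with $A:=F+\mathfrak K(u)\in L^{\widetilde p'}$ and $\mathbb M(t)=\int_0^tB(u)\dd\mathbb W$. Since $\nabla u\in L^{\widetilde p}$, there is a full-measure set $\mathcal R$ of times with $u(t)\in V$ almost surely. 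I then choose the nested partitions $I_\ell$ of Lemma~\ref{lem:px-time-approx} with interior nodes in $\mathcal R$.

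Next, at a fixed node $t=t^\ell_{k_\ell}$, I would sum the increment identity \eqref{eq:discrete-increment-identity} of Lemma~\ref{lem:right-endpoint-increment} over the cells $[t_i^\ell,t_{i+1}^\ell]$ with $i\ge1$; the first cell $[0,t_1^\ell]$ is handled separately. The summed drift terms give $-2\int_{t_1^\ell}^t\int A\cdot\nabla\widehat u^\ell$. By \eqref{eq:px-step-grad} and the variable-exponent H\"older inequality, this converges to $-2\int_0^t\int A\cdot\nabla u$. The martingale cross terms $2\sum\langle u(t_i),\Delta_i\mathbb M\rangle$ are stochastic integrals of the predictable left-step process $\widetilde u^\ell$. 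Using \eqref{eq:px-step-H}, the bound \eqref{eq:Ito-sup-assumption}, and localization, they converge in probability to $2\int_0^t\langle u,B(u)\dd\mathbb W\rangle$. The quadratic sums converge to $\int_0^t\|B(u)\|_{\mathcal L_2}^2$ by Lemma~\ref{lem:Hilbert-discrete-qv}, and the internal remainders vanish by Lemma~\ref{lem:internal-remainder}.

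The \emph{initial-time remainder} is the main obstacle, because $u_0$ need not lie in $V$. Here I would write $\|u(t_1^\ell)\|^2-\|u_0\|^2$ directly. Weak $H$-continuity gives $\liminf\|u(t_1^\ell)\|\ge\|u_0\|$. An upper bound comes from applying Lemma~\ref{lem:right-endpoint-increment} on $[0,t_1^\ell]$: its drift term is bounded by $\int_0^{t_1}\int|A||\nabla u(t_1)|$, which I would control after averaging the choice of $t_1^\ell$ within the cell, i.e.\ by the shift choice in Lemma~\ref{lem:px-time-approx}. Since $t_1^\ell\downarrow0$, the martingale terms vanish. Together these yield $\|u(t_1^\ell)\|\to\|u_0\|$ along a subsequence, and weak continuity then upgrades this to strong convergence at $0$.

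Combining the preceding steps gives \eqref{eq:variable-Ito-formula} almost surely at every node in $\bigcup_\ell I_\ell$, which is a dense countable set. The right-hand side of \eqref{eq:variable-Ito-formula} is continuous in $t$, so $t\mapsto\|u(t)\|_H$ agrees on a dense set with a continuous function. With weak continuity and lower semicontinuity, this forces the identity at every $t$: the inequality $\le$ holds everywhere, and equality follows by running the argument with nodes approaching $t$ from the right. Hence norm continuity holds, and so strong $H$-continuity. Finally, \eqref{eq:unidentified-energy} follows from the transport cancellations. First, $2\langle\mathcal C_\xi(u),u\rangle=-\sum_j\|\xi_j\cdot\nabla u\|^2=-\|B(u)\|_{\mathcal L_2}^2$. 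Second, $\langle u,\xi_j\cdot\nabla u\rangle=0$ since $\Div\xi_j=0$, which holds for $u\in V$ by mollification.
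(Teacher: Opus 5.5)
Your architecture matches the paper's: the rewriting with $A=F+\mathfrak K(u)$, nested partitions with interior nodes in $\mathcal R$, Lemmas~\ref{lem:Hilbert-discrete-qv} and~\ref{lem:internal-remainder} for the quadratic variation and internal remainder, and a separate first cell. Your first-cell argument is a valid variant. You apply Lemma~\ref{lem:right-endpoint-increment} on $[0,t_1^\ell]$, drop the nonnegative remainder, and kill the drift. That works because $\widehat u^\ell=u(t_1^\ell)$ on the first cell, so \eqref{eq:px-step-grad} and variable-exponent H\"older already give $\mathbb E\int_0^{t_1^\ell}\int|A|\,|\nabla u(t_1^\ell)|\to0$; no new shift averaging is needed. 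The paper instead drops $r_\ell^0\ge0$ to obtain \eqref{eq:preliminary-energy-inequality} at all nodes on one set, evaluates it at $s_\ell=t_1^\ell$, and then returns to the exact identity. Your route is more direct and gives the identity at each fixed node separately.

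The gap is in the extension to all $t\in[0,T]$. Write $\Phi(t)$ for the right-hand side of \eqref{eq:variable-Ito-formula}. First, at $t=T$ there are no nodes to the right. Your argument then gives only $\norm{u(T)}_H^2\le\Phi(T)$, so a loss of norm at $T$, and hence discontinuity there, is not excluded. Second, the claim holds on one full-probability set for every $t$. You therefore cannot rerun the initial-time argument, which works in $L^1(\Omega)$ and then along a subsequence, separately at uncountably many $t$. You need a pathwise bound uniform in $t$. Fix one subsequence along which \eqref{eq:global-step-errors-as} holds almost surely. Then, for the cell $(t_j^m,t_{j+1}^m]$ containing $t$, with norms on $(0,T)\times\mathbb T^d$,
\[
\int_t^{t_{j+1}^m}\int_{\mathbb T^d}|A|\,|\nabla u(t_{j+1}^m)|\dd x\dd s
\le2\norm{A(\omega)}_{L^{p'(\cdot)}}\norm{\nabla\widehat u^m(\omega)-\nabla u(\omega)}_{L^{p(\cdot)}}
+\sup_{0\le b-a\le|I_m|}\int_a^b\int_{\mathbb T^d}|A|\,|\nabla u|\dd x\dd s .
\]
This tends to zero uniformly in $t$. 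The increment identity with a non-node left endpoint also holds for all $t$ on one set, because its proof is pathwise once the countably many nodes are fixed. With this, your right-sided inequality gives $\norm{u(t)}_H^2\ge\Phi(t)$ for every $t<T$. For $t=T$ (in fact for every $t$), use the left-sided version instead. Test the increment on $[t_m,t]$ by $u(t_m)\in V$, where $t_m$ is the largest node of $I_m$ below $t$. This gives $\norm{u(t)}_H^2\ge\norm{u(t_m)}_H^2-2\int_{t_m}^t\int A\cdot\nabla\widetilde u^m+2\pair{u(t_m)}{\mathbb M(t)-\mathbb M(t_m)}_H$, with the same uniform control through $\widetilde u^m$ on the last cell. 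This is essentially the paper's final step, which sharpens it to a Cauchy estimate for $u(t_m)$ in $H$ and so obtains strong continuity directly.
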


\begin{proof}
By Lemma~\ref{lem:transport-divergence}, with
$
A:=F+\mathfrak K(u)\in L^{\widetilde p'}(Q_T;\mathbb R^d),
$
we may rewrite~\eqref{eq:abstract-unidentified} as~\eqref{eq:abstract-divergence-total}, where
\[
\mathbb M(t):=\int_0^tB(u(s))\dd\mathbb W_s.
\]
Choose a full-measure set $\mathcal R\subset(0,T)$ such that $u(t)\in V$ almost surely for every $t\in\mathcal R$. Lemma~\ref{lem:right-endpoint-increment} then gives~\eqref{eq:discrete-increment-identity} on every interval whose right endpoint belongs to $\mathcal R$.

\noindent\textbf{Step 1: Time discretization.}
Choose the nested partitions from Lemma~\ref{lem:px-time-approx} with all interior nodes in $\mathcal R$. In addition to the $V$-valued left step process $\widetilde u^\ell$ from that lemma, define on $(0,T]$ the $H$-valued predictable process
\begin{equation*}
\widetilde u_H^\ell(s)
:=u_0\mathbf 1_{(0,t_1^\ell]}(s)+\widetilde u^\ell(s),
\end{equation*}
and set $\widetilde u_H^\ell(0):=u_0$. Since the two processes differ only on the first cell,
\[
\mathbb{E}\int_0^T\norm{\widetilde u_H^\ell-\widetilde u^\ell}_H^2\dd s
=t_1^\ell\mathbb{E}\norm{u_0}_H^2\longrightarrow0,
\]
and therefore~\eqref{eq:px-step-H} implies
\begin{equation}\label{eq:left-step-H-conv}
\widetilde u_H^\ell\longrightarrow u
\quad\text{strongly in }L^2(\Omega\times(0,T);H).
\end{equation}
Set
\[
\mathcal N:=\bigcup_{\ell\ge1}\bigl(I_\ell\cap(0,T)\bigr).
\]
Then $\mathcal N$ is countable and dense in $(0,T)$.  Fix $t\in\mathcal N$.  For all sufficiently large $\ell$, write
\[
0=t_0^\ell<t_1^\ell<\cdots<t_{k_\ell}^\ell=t.
\]
Summing~\eqref{eq:discrete-increment-identity} over the intervals of the partition gives the exact identity
\begin{align}
\norm{u(t)}_H^2-\norm{u_0}_H^2
={}&-2\int_0^t\int_{\mathbb T^d}A(s)\cdot\nabla\widehat u^\ell(s)\dd x\dd s
+2\int_0^t\pair{\widetilde u_H^\ell(s)}{B(u(s))\dd\mathbb W_s}_H\notag\\
&+\sum_{i=0}^{k_\ell-1}
\norm{\mathbb M(t_{i+1}^\ell)-\mathbb M(t_i^\ell)}_H^2
-r_\ell^0-\mathscr R_\ell(t),
\label{eq:partition-energy-refined}
\end{align}
where
\begin{equation}\label{eq:first-remainder}
r_\ell^0:=\norm{u(t_1^\ell)-u_0-\mathbb M(t_1^\ell)}_H^2,
\end{equation}
and $\mathscr R_\ell(t)$ denotes the internal remainder defined in~\eqref{eq:internal-remainder}. The first cell is handled separately, because $u_0$ need not lie in $V$.

The drift term in~\eqref{eq:partition-energy-refined} converges in $L^1(\Omega)$ to
$-2\int_0^t\int A\cdot\nabla u$, by virtue of~\eqref{eq:px-step-grad}. For the stochastic term, set $\tau_M:=\inf\{r\in[0,T]:\norm{u(r)}_H>M\}\wedge T$, with $\inf\varnothing=\infty$. This is a stopping time: adapted weakly continuous paths are progressively measurable in the separable space $H$, and $\{v:\norm{v}_H>M\}$ is weakly open. By~\eqref{eq:left-step-H-conv}, a subsequence satisfies $\widetilde u_H^\ell\to u$ almost everywhere on $\Omega\times(0,T)$. For $0<s<\tau_M$, both $u(s)$ and the left endpoint value $\widetilde u_H^\ell(s)$ have norm at most $M$. If $\norm{u_0}_H>M$, then $\tau_M=0$ and the stopped integrals vanish. Values at $s=\tau_M$ do not affect the time integrals. Dominated convergence with the integrable majorant $4M^2\norm{B(u)}_{\mathcal L_2(\mathcal U,H)}^2$ therefore gives
\[
\mathbb{E}\int_0^{t\wedge\tau_M}
\norm{\widetilde u_H^\ell-u}_H^2
\norm{B(u)}_{\mathcal L_2(\mathcal U,H)}^2\dd s\longrightarrow0.
\]
Applying the Burkholder--Davis--Gundy inequality yields
\[
\mathbb{E}\sup_{r\le t}\left|
\int_0^{r\wedge\tau_M}\pair{\widetilde u_H^\ell-u}{B(u)\dd\mathbb W}_H
\right|^2\longrightarrow0.
\]
Finally, $\tau_M\uparrow T$ almost surely because $u$ has weakly continuous $H$-paths and satisfies~\eqref{eq:Ito-sup-assumption}; more explicitly,
\[
\mathbb P(\tau_M<T)
\le M^{-2}\mathbb{E}\sup_{0\le r\le T}\norm{u(r)}_H^2
\longrightarrow0.
\]
Letting $M\to\infty$ gives
\[
\sup_{r\le t}\left|
\int_0^r\pair{\widetilde u_H^\ell-u}{B(u)\dd\mathbb W}_H
\right|\longrightarrow0
\quad\text{in probability}.
\]
By Lemma~\ref{lem:Hilbert-discrete-qv}, the quadratic-variation sums converge in probability to
\[
\int_0^t\norm{B(u(s))}_{\mathcal L_2(\mathcal U,H)}^2\dd s.
\]
Moreover, by Lemma~\ref{lem:internal-remainder}, $\mathscr R_\ell(t)\to0$ in probability.

Enumerate $\mathcal N=\{s_1,s_2,\ldots\}$. Choose $\ell_m>\ell_{m-1}$ such that the probability that any of the four errors at $s_1,\ldots,s_m$ exceeds $2^{-m}$ is at most $2^{-m}$. By the Borel--Cantelli lemma, all errors at every node converge almost surely along this subsequence. Intersecting countably many such events with the increment identities and the continuity events, we obtain a single full-probability set $\Omega_0$; relabel the partitions as $I_\ell$. Since $r_\ell^0\ge0$,~\eqref{eq:partition-energy-refined} yields, on $\Omega_0$ and for every $t\in\mathcal N$,
\begin{align}
\norm{u(t)}_H^2
\le{}&\norm{u_0}_H^2
-2\int_0^t\int_{\mathbb T^d}A\cdot\nabla u\dd x\dd s
+\int_0^t\norm{B(u)}_{\mathcal L_2(\mathcal U,H)}^2\dd s\notag\\
&+2\int_0^t\pair{u(s)}{B(u(s))\dd\mathbb W_s}_H.
\label{eq:preliminary-energy-inequality}
\end{align}

Let $s_\ell:=t_1^\ell$. Then $s_\ell\downarrow0$ along a subsequence, and $s_\ell\in\mathcal N$. By weak $H$ continuity,
$u(s_\ell)\rightharpoonup u_0$ weakly in $H$ almost surely. Since $A\cdot\nabla u$ and $\|B(u)\|_{\mathcal L_2}^2$ are integrable, and since the scalar local martingale is continuous,~\eqref{eq:preliminary-energy-inequality} yields
\[
\limsup_{\ell\to\infty}\norm{u(s_\ell)}_H^2\le\norm{u_0}_H^2.
\]
Weak lower semicontinuity gives the reverse liminf inequality, and therefore
\begin{equation}\label{eq:strong-initial-nodes}
u(s_\ell)\longrightarrow u_0
\quad\text{strongly in }H\quad\text{almost surely}.
\end{equation}
Since $\mathbb M(s_\ell)\to0$ in $H$ by continuity, it follows from~\eqref{eq:first-remainder} and~\eqref{eq:strong-initial-nodes} that
\begin{equation}\label{eq:first-remainder-zero}
r_\ell^0\longrightarrow0
\qquad\text{almost surely}.
\end{equation}
Returning to the exact identity~\eqref{eq:partition-energy-refined} and using
\eqref{eq:internal-remainder-zero}--\eqref{eq:first-remainder-zero}, we obtain that for every $t\in\mathcal N$,
\begin{align}
\norm{u(t)}_H^2
={}&\norm{u_0}_H^2
-2\int_0^t\int_{\mathbb T^d}A\cdot\nabla u\dd x\dd s
+\int_0^t\norm{B(u)}_{\mathcal L_2(\mathcal U,H)}^2\dd s\notag\\
&+2\int_0^t\pair{u(s)}{B(u(s))\dd\mathbb W_s}_H.
\label{eq:px-Ito-total-flux}
\end{align}

By~\eqref{eq:px-step-grad} and the localized martingale convergence, take a further subsequence $I_m$ and refine $\Omega_0$ so that
\begin{equation}\label{eq:global-step-errors-as}
\int_0^T\int_{\mathbb T^d}
\left(
|\nabla\widetilde u^m-\nabla u|^{p(x)}
+|\nabla\widehat u^m-\nabla u|^{p(x)}
\right)\dd x\dd s\longrightarrow0,
\end{equation}
and
\[
\sup_{r\le T}\left|
\int_0^r\pair{u-\widetilde u_H^m}{B(u)\dd\mathbb W}_H
\right|\longrightarrow0
\quad\text{almost surely}.
\]
Since $\mathcal N$ is countable, we further reduce $\Omega_0$ so that, for every $a,b\in\mathcal N$ with $a<b$, the increment equation tested by $u(a)$ and the identity
\[
\pair{u(a)}{\mathbb M(b)-\mathbb M(a)}_H
=\int_a^b\pair{u(a)}{B(u(s))\dd\mathbb W_s}_H
\]
hold simultaneously. The latter identity follows by approximation with simple $\mathcal F_a$-measurable $H$-valued random variables and localization.

For $t\in(0,T)$ let $t_m$ be the largest node of $I_m$ not exceeding $t$; for $t=T$ use the last interior node. For large $m$, $t_m\in\mathcal R$, $t_m\uparrow t$, and $\widetilde u^m=u(t_m)$ on $(t_m,t_\ell]$ for $\ell>m$. Subtract~\eqref{eq:px-Ito-total-flux} at these nodes and test the increment equation by $u(t_m)$ to obtain
\begin{align*}
\norm{u(t_\ell)-u(t_m)}_H^2
={}&-2\int_{t_m}^{t_\ell}\int_{\mathbb T^d}
A(s)\cdot\nabla\bigl(u(s)-\widetilde u^m(s)\bigr)\dd x\dd s\\
&+2\int_{t_m}^{t_\ell}
\pair{u(s)-\widetilde u_H^m(s)}{B(u(s))\dd\mathbb W_s}_H
+\int_{t_m}^{t_\ell}\norm{B(u(s))}_{\mathcal L_2(\mathcal U,H)}^2\dd s.
\end{align*}
On $\Omega_0$, by Fubini's theorem, variable-exponent H\"older and \eqref{eq:global-step-errors-as}, we obtain
\begin{align*}
&~~\left|\int_{t_m}^{t_\ell}\int_{\mathbb T^d}A(s,\omega)\cdot\nabla(u-\widetilde u^m)(s,\omega)\dd x\dd s\right|\\
&\le2\norm{A(\omega)}_{L^{p'(\cdot)}((0,T)\times\mathbb T^d)}
\norm{\nabla u(\omega)-\nabla\widetilde u^m(\omega)}_{L^{p(\cdot)}((0,T)\times\mathbb T^d)}\longrightarrow0.
\end{align*}
Note that this bound is uniform in $t$ and $\ell>m$. The martingale term is bounded by twice its supremum, and absolute continuity gives
\[
\sup_{\substack{0\le a\le b\le T\\b-a\le|I_m|}}
\int_a^b\|B(u(s))\|_{\mathcal L_2}^2\dd s\longrightarrow0.
\]
Thus $u(t_m)$ is Cauchy in $H$. By weak continuity, its limit is identified with $u(t)$, so~\eqref{eq:px-Ito-total-flux} holds for every $t\in[0,T]$ on $\Omega_0$. Since the right-hand side is continuous, and since the norm is continuous and $u$ is weakly $H$-continuous, we conclude that $u\in C([0,T];H)$.

\noindent\textbf{Step 2: Transport cancellation.}
By~\eqref{eq:transport-flux-K} and~\eqref{eq:Cxi-pairing}, we obtain
\[
-2\int_{\mathbb T^d}\mathfrak K(u)\cdot\nabla u\dd x
=-\sum_j\norm{\xi_j\cdot\nabla u}_H^2
=2\pair{\mathcal C_\xi(u)}{u}.
\]
Consequently,~\eqref{eq:px-Ito-total-flux} with $A=F+\mathfrak K(u)$ coincides with~\eqref{eq:variable-Ito-formula}. Finally,
\[
\norm{B(u)}_{\mathcal L_2(\mathcal U,H)}^2
=\sum_j\norm{\xi_j\cdot\nabla u}_H^2,
\qquad
\pair{u}{B(u)\mathbf e_j}_H
=\frac12\int_{\mathbb T^d}\xi_j\cdot\nabla(u^2)\dd x=0.
\]
Therefore the transport correction, the quadratic variation, and the scalar martingale cancel, and~\eqref{eq:unidentified-energy} follows. This completes the proof.
\end{proof}

\subsection*{Funding}
J. Zhang is supported
by the China Postdoctoral Science Foundation under Grant Number 2026M794838.

\subsection*{Declaration of competing interest}
The authors declare that they have no known competing financial interests or personal relationships that could have appeared to influence the work reported in this paper.

\subsection*{Data availability}
No datasets were generated or analysed in this theoretical study.

\subsection*{Author Contributions}

All authors contributed equally to the conception, analysis, and preparation of this manuscript. All authors have read and approved the final manuscript.

\small
\phantomsection

\end{document}